\documentclass[11pt,a4paper,reqno]{amsart}

\usepackage[utf8]{inputenc}
\usepackage[english]{babel}

\usepackage[a4paper, top=3cm, bottom=3cm, left=2cm, right=2cm]{geometry}

\usepackage{bbm,amsmath,amsthm,epsfig,latexsym,marvosym}
\usepackage{amsfonts}
\usepackage{bigints}
\usepackage{amssymb}
\usepackage{subfig} 

\allowdisplaybreaks

\usepackage{esint,xfrac}
\usepackage[colorlinks=true,linkcolor=blue,citecolor=red]{hyperref}


\setcounter{tocdepth}{2}

\def\A{\mathbb A}

\def\R{\mathbb R}
\def\N{\mathbb N}
\def\C{\mathbb{C}}
\def\BD{\mathrm{BD}}
\def\BDD{\mathrm{BD}_{\mathrm{dev}}}

\def\BV{\mathrm{BV}}

\def\cal{\mathcal}
\def\Ao{{\cal A}}

\def\E{{\cal E}}
\def\F{{\cal F}}

\def\H{{\cal H}}

\def\L{{\cal L}}

\def\II{{\mathrm{Id}}}
\def\HH{{\mathbf H}}
\def\AA{{\mathbb A}}

\def\a{\alpha}
\def\b{\beta}

\def\e{\varepsilon}

\def\ss{\mathrm{s}}
\def\bb{\mathrm{b}}
\def\Aa{\mathrm{A}}

\def\div{\mathrm{div}}
\def\tr{\mathrm{tr}}

\def\Om{\Omega}

{\left\lbrace\begin{array}{@{}l@{}}}%
{\end{array}\right.}

\def\trace{{\rm tr}\,}
\def\wt{{\rightharpoonup^*}\,}
\def\Id{{\rm Id}\,}
\def\loc{{\rm loc}}

\def\d{\, \mathrm{d}}

\def\curl{{\rm curl}}

\def\00{{\bf 0}}
\def\dive{{\rm div}}

\newcommand{\restr}{%
  \,\raisebox{-.127ex}{\reflectbox{\rotatebox[origin=br]{-90}{$\lnot$}}}\,%
}

\DeclareMathOperator*{\spt}{spt}
\DeclareMathOperator*{\wlim}{\textrm{w*-lim}}
\DeclareMathOperator*{\diam}{diam}

\newtheorem{theorem}{Theorem}[section]
\newtheorem{corollary}[theorem]{Corollary}
\newtheorem{proposition}[theorem]{Proposition}
\newtheorem{lemma}[theorem]{Lemma}
\newtheorem{conjecture}[theorem]{Conjecture}

\theoremstyle{definition}
\newtheorem{remark}[theorem]{Remark}

\numberwithin{equation}{section}
\numberwithin{figure}{section}

\pagestyle{plain}

\usepackage{color}


\title[Rigidity and functional properties of $\BDD(\Omega)$]{Rigidity and functional properties of $\BDD(\Omega)$ }

\author{Marco Caroccia}
\address{Politecnico di Milano, Dipartimento di matematica}

\email{marco.caroccia@polimi.it}

\author{Nicolas Van Goethem}
\address{Centro de Matemática e Aplicações Fundamentais, Universidade de Lisboa}

\email{vangoeth@fc.ul.pt}

\makeindex


\begin{document}
\maketitle

\begin{abstract}
We provide a structural analysis of the space of functions of bounded deviatoric deformation, $\BDD$, which arises in models of plasticity and fluid mechanics. The main result is the identification of the annihilator and a rigidity theorem for $\BDD$ maps with constant polar vector in the wave cone characterizing the structure of singularities for such maps. This result, together with an explicit kernel projection operator, enables an iterative blow-up procedure for relaxation and homogenization problems, allowing for integrands with explicit dependence on $u$ as well as $\E_d u$. Our approach overcomes several difficulties compared to the $\BD$ case, in particular due to the lack of invariance of $\E_d$ under orthogonalization of the polar directions. Applications to integral representation and material science are discussed.
\end{abstract}
 
\tableofcontents
\section{Introduction}
Functional spaces have long occupied a foundational role in modern analysis, furnishing a natural and robust framework for the formulation and resolution of both abstract and applied problems across Mathematics and theoretical Physics. Their power stems from the capacity to encode, within a unified formalism, properties of functions—such as regularity, integrability, and decay—often in a manner that reflects the intrinsic geometry and symmetries of the underlying structures or physical systems under consideration
\\

The importance of choosing an appropriate functional setting becomes evident when dealing with problems characterized by singularities, multi-scale phenomena, or by the presence of differential constraints, as in continuum mechanics, materials science, or image processing. In these contexts, rigidity properties play a central role: they capture the extent to which the imposition of certain differential constraints limits the possible structure of admissible functions, especially at singular points or along lower-dimensional sets where concentrations may occur.\\

Rigidity for functions satisfying some differential properties in the sense of measures is encompassed in the general study of the  fine properties of these functions, and serves as a powerful tool to tackle several problems in Mathematical physics. In the field of Calculus of Variations, rigidity serves in particular to determine blow-ups at some singular points (Cantor points for instance), that are a crucial tool for homogenization and relaxation purposes (see \cite{CVG2025}). In particular, in phase transition problems, image segmentation, material design, or variational frameworks in continuum mechanics involving multi-scale phenomena, field concentration often occurs on lower-dimensional sets, such as cracks and dislocations.. Typical examples of functions with differential constraints are the $\mathrm{BV}$ functions (see \cite{ambrosio2000functions}), whose gradient are defined in the sense of measures, or functions of bounded deformations, denoted as $\BD$ (see \cite{ambrosio1997fine,ebo99b,TS2}), whose symmetric gradient is constrained to be a finite Radon  measure. This latter space is typically of use in linearized elasticity models of fracture mechanics or in infinitesimal elasto-plasticity.  However, in the field of Plasticity or Fluid mechanics (where the shear deformation is the kinematical variable used to define the viscous stresses), shear deformations are favored and the corresponding functional space is the space of bounded deviatoric deformations named as $\BDD$. Note that such a prominent phenomena as crystal plasticity is also  mainly governed by shear efforts due to the formation and motion of dislocations in shear planes, as plasticity is not affected by traction or compression efforts in the material. \\

Despite their significance, the fine properties and structural results for the space $\BDD$ remain much less explored compared to the classical settings of $\BV$ or $\BD$. The aim of the present work is to address this gap by providing a thorough analysis of $\BDD$. The main novelty of this manuscript is the characterization of $\BDD$ maps with constant polar vector lying in the \textit{wave cone}, which is useful in the theory of relaxation and homogenization via iterated blow-ups, as in \cite{caroccia2019integral}, \cite{CVG2025}, \cite{de2016structure}. A notable advantage of this approach to homogenization, based on iterated blow-ups through rigidity and kernel projection, is that it does not restrict the energy to depend solely on $\E_d u$, but also accommodates integrands with explicit dependence on $u$ itself, up to an $L^{\infty}$ bound. 

\subsection{The space of bounded deviatoric deformations}
For a general $(n\times n)$-matrix-valued tensor $T$, two classical orthogonal decompositions apply: 
\[
T=\underset{\mathrm{sym} T }{ \underbrace{\frac{T+T^{t}}{2} }}+\underset{\mathrm{skew} T }{ \underbrace{\frac{T-T^{t}}{2} }}, \ \ \ \ \ \ \ T=\underset{\mathrm{dev}T}{ \underbrace{T - \frac{\tr(T)}{n}\II }}+\frac{\tr T}{n}\II.
\]
The first decomposition is into the symmetric and skew (or anti-symmetric) part. The second decomposition is into the shear part and the volumetric part, with $\tr T$ denoting the trace of the matrix $T$ and $\II$ the $n\times n$ identity matrix. Combining both decompositions, we get the so-called Cartan decomposition of the Lie algebra $\mathfrak{gl}(n)$, i.e.,  $\mathfrak{gl}(n)=\left(\mathfrak{sl}(n)\cap\mathrm{Sym}(n)\right)\oplus\mathfrak{so}(n)\oplus\R\II$ (see e.g., \cite{BNPS2015}), namely 
\[
T=\mathrm{sym}\ T+\mathrm{skew}\ T=\mathrm{dev\ sym}\ T+\mathrm{skew}\ T+\frac{\tr T}{n}\II.
\]
Take now $T=Du$ for $u$ a vector-valued function of bounded variation; one obtains
\[
Du=\E u+\mathrm{skew}\ Du=\mathrm{dev}\ \E u+\mathrm{anti}\ \curl u+\frac{\dive u}{n}\II,
\]
where $\E u:=\mathrm{sym}\ Du=\left(\frac{Du+Du^t}{2}\right)$, $\mathrm{anti}\ \curl u:=\mathrm{skew}\ Du$, thus showing a decomposition of the deformation gradient $Du$ into an (infinitesimal) rotation term, $\mathrm{anti}\ \curl u$, a volumetric term, $\frac{\dive u}{n}\II$, and, eventually, a pure shear term, $\mathrm{dev}\ \E u$. 

It turns out that several issues arising from continuum mechanics, such as fluid mechanics or linearized elasticity (but also in general relativity), involve the deviatoric operator
\[
\E_d u:=\mathrm{dev}\ \E u =\E u- \frac{\dive(u)}{n}\II.
\]
$\E_d u$ is defined, in principle, on functions $u\in C^1(\Omega;\R^n)$; but, as for the larger spaces $\BV$ and $\BD$, it becomes relevant to introduce the space of functions of \textit{bounded deviatoric deformations}:
\[
\BDD(\Omega):=\left\{\left. u\in L^1(\Omega;\R^n) \ \right| \ \E_d u\in \mathcal{M}(\Omega;\mathbb{M}_{\mathrm{sym}_0}^{n\times n})\right\}\subset \BD(\Omega).
\]
where $\mathcal{M}(\Omega;\mathbb{M}_{\mathrm{sym}_0}^{n\times n})$ is the space of finite \textit{trace-free, symmetric} $\mathbb{M}^{n\times n}$-valued Radon measures.  \smallskip

\subsection{Energy minimization problems in Materials science}\label{sbsct:Energy}
        As far as variational problems are concerned, in elasticity or elasto-plasticity, but also soft matter and material engineering, it is classical to consider the stored elastic energy
\[
\int_\Om  W(\E u)dx
\]
and to consider the additive decomposition $W(\E u)= W_{\rm{shear}}(\rm{dev}\E u)+ W_{\rm{bulk}}(\dive u)$ with some growth properties on each kinematical variable. Simply putting $ W_{\rm{bulk}}(\dive u)=0$ is not a reasonable choice, at least for any natural material, since it implies the unphysical assumption of a material with vanishing bulk (compression) modulus. However, it might be of interest and studied as a limit case for the fabrication of some metamaterials, within the challenging field of material synthesis. These modern, artificial solids or liquids, like colloidal crystals, polymer foams, or unscreened metals, exhibit plenty of surprising properties and therefore open the way to many modern industrial applications \cite{VanishingBulk2,VanishingBulk4,VanishingBulk1}. Many of these properties are related to materials with negative Poisson ratios \cite{NegativePoisson2,NegativePoisson}, like auxetic materials, which, in contrast to classical materials, exhibit a reverse deformation mechanism \cite{NegativePoisson}. The huge diversity of mechanical properties of modern and natural materials can also be viewed in plots of the bulk modulus $B$ versus shear modulus $G$. For instance, materials with a small Poisson’s ratio are more easily compressed than sheared (small $B/G$), whereas those with a high Poisson’s ratio resist compression in favour of shear (large $B/G$). As a limit case, when $B/G\ll 1$ (hence with Poisson's ratio at its lower bound, $\nu\to -1$), materials are extremely compressible, examples being re-entrant foams and molecular structures \cite{NegativePoisson2}.  

As a matter of fact, one may be interested in studying homogenization problems like $W_\varepsilon(\E u)= g(\varepsilon)W_0(\E_d u) + {b(\varepsilon) W_{\rm{vol}}(\dive u)}$ with $b/g(\varepsilon)\to0$ as $\varepsilon\to0$. At the limit, no bound on $\dive u$ exists, and for linear type $W_0$ as $C^{-1}|\xi|\leq W_0(\xi)\leq C|\xi|$ the relevant space is thus $\BDD$. In mathematical physics, the problem of minimizing
\[
\int_\Om  \left(W_{\rm{shear}}(\E_d u)-f\cdot u\right)dx
\]
is relevant in general relativity (specifically for the study of black holes), for the so-called “momentum constraint” issue (see \cite{Dain} and the references therein). 


\smallskip

In terms of functional spaces, one has $\BDD\supset \mathrm{BD}\supset \mathrm{BV}$, and, as we said, the mechanics of the phenomena under analysis do not always allow one to consider the full deformation gradient $Du$ as a kinematical variable on its own in any mathematical model, i.e., one often has no control on all components of the full gradient. Now, suppose for instance we have a functional $\mathcal F:\mathrm{S}(\Omega)\times \mathrm{Bor}(\Omega)\to [0,+\infty]$  (here $ \mathrm{Bor}(\Omega)$ is the family of Borel sets of $\Omega$) defined on the functional space $\mathrm{S}(\Omega)=\mathrm{BV}(\Omega), \mathrm{BD}(\Omega)$, or $\BDD(\Omega)$, lower semicontinuous on $\mathrm{S}$, and satisfying the bound
\begin{align}\label{Fbound}
0\leq \mathcal F(u)\leq C |\mu|(\Omega),
\end{align}
for some $C>0$, and where $\mu=Du, \E u$, or $\E_d u$, respectively, with $|\mu|$ the total variation measure of $\mu$. Then, according to the physical problem under study, the analysis of $\mathcal F$ and in particular the question of the existence of an integral representation for $\mathcal F$ arises naturally in each $\mathrm S$. This issue is prominent if, for instance, $\F$ arises from a relaxation process, since then it will be lower semicontinuous, even if its integral expression is not known, or even if it would exist. Also, in homogenization processes, one might wish to pass to the limit with respect to a small scale parameter present in the deviatoric part of the energy $W_{\rm{bulk}}(\E_d u)$ and eventually determine the integral form of the limit. For $\BV$, the pioneering work can be found in \cite{bouchitte1998global}, while in $\BD$ it was recently achieved by the authors in \cite{caroccia2019integral}. It thus became natural to raise the question of an integral representation for $\mathcal F$ as defined in $\BDD$.

\smallskip

The strategy developed in \cite{caroccia2019integral} goes through an iterative blow-up procedure that is based on rigidity properties of $\BD$ maps with constant polar vectors and a specific projection operator onto $\mathrm{Ker}(\E)$ appearing in the Korn-Poincaré inequality. Actually, as shown in \cite{CVG2025}, where the authors refined a double blow-up technique for this purpose, these two ingredients are enough to tackle homogenization and integral representation problems.\\

\smallskip
We here provide a rigidity property for maps with constant polar in $\BDD$ and a specific projection operator $\mathcal{R}:L^1\rightarrow \mathrm{Ker}(\E_d)$. With these two results at hand, we will establish the two main ingredients needed to solve integral representation and homogenization problems in $\BDD$.

\subsection{Rigidity result}
Given a map $u\in \BDD(K)$, suppose that
\begin{equation}\label{eqn:introrigidity}
\E_d u = M \mu, \ \  \mu\in \mathcal{M}(K;\R^+)  
\end{equation}
for $\mu$ a real-valued non-negative Radon measure and $M\in \mathbb{M}_{\mathrm{sym}_0}^{n\times n}$ a constant matrix. Then what can be said about $u$? Actually, for the purposes of the present analysis it is not necessary for $M$ to be a generic matrix, but just a matrix in the \textit{wave cone} $\Lambda_{\Ao}$: a specific subspace of $\mathbb{M}_{\mathrm{sym}_0}^{n\times n}$ depending on the differential operator $\Ao$ annihilating $\E_d$ (such that $\Ao \E_d =0$). Indeed, with classical tools (as done in \cite{de2019fine}), it is easy to show that if $M\notin \Lambda_{\Ao}$, then $u$ must be actually $C^{\infty}$ and thus the relevant case from the technical point of view is the case $M\in \Lambda_{\Ao}$. Not only this: given $u\in \BDD(\Omega)$, we can consider the Radon-Nikodým decomposition
\[
\E_d u= \frac{\d \E_d u}{\d \L^n}\L^n + \E_d^s u.
\]
Blowing up $u$ at some absolutely continuous point $x$, we have convergence to an affine map given by $y\mapsto e_d (u) (x) y + u(x)$ (here $e_d (u):=e(u)-\frac{1}{n}\II$, $e(u)=\frac{\nabla u + \nabla u^t}{2}$, where $\nabla u(x)$ is the \textit{approximate differential of $u$ at $x$} that exists $\L^n$-a.e.). But for integral representation and homogenization, a characterization of the blow-ups on $ \spt(\E_d^s u)$ is also required. A celebrated result (valid for any elliptic differential operator) due to De Philippis and Rindler \cite{de2016structure} implies that
\[
 \frac{\d \E_d^s u}{\d |\E_d^s u|}\in \Lambda_{\Ao} \ \ \text{for $|\E_d^s u|$-a.e. $x\in \Omega$ }.
\]
If we then consider a blow-up at $x\in \spt(\E_d^s u)$ on a specific convex set $K$: 
\begin{equation}\label{eqn:blwupSequences}
u_{K,\e,x}(y)=\frac{u(x+\e y) - \mathcal R_K[u](y)}{\frac{|\E_d u|(K_{\e}(x) )}{|K|\e^{n-1}}},
\end{equation}
we see that $u_{K,\e,x}\rightarrow v$ where 
\[
\E_d v =  \frac{\d \E_d^s u}{\d |\E_d^s u|}(x) |\E_d v|.
\]
Thus, in order to characterize the blow-ups, we need to study the structure of solutions to \eqref{eqn:introrigidity} in the case when $M\in \Lambda_{\Ao}$. \\

Note that the annihilator of a differential operator is not unique, since if $\Ao$ annihilates $\E_d$ then $\partial_i \Ao$ still annihilates $\E_d$. But it is true that $\Lambda_{\Ao}\subset \Lambda_{\partial_i \Ao}$. In particular, to constrain as much as possible the polar of the singular part, one wants to find the annihilator $\Ao$ of $\E_d$ of lowest order. We are able to compute such an annihilator, and with this differential operator at hand we can focus our rigidity result only on $M\in \Lambda_{\Ao}$. 
\begin{remark}
A major challenge in the present setting, compared to the classical $\BD$ case, arises from the fact that the deviatoric operator $\E_d$ does not behave well under changes of variables that orthogonalize the vectors $a$ and $b$. Such changes are routinely used in the $\BD$ case to simplify the analysis, but are not available here. This fundamental obstacle is one of the main sources of technical complexity in our proof.
\end{remark}
We recall the notation:
\[
a\odot b:=\frac{a\otimes b+b\otimes a}{2}.
\]
\begin{theorem}\label{MainTheoremINTRO}
Let $n\geq 3$. There exists an annihilator $\Ao$ for $\E_d$ of order $4$ for which it holds 
\[
\Lambda_{\Ao}=\left\{\left. a\odot b - \frac{(a \cdot b)}{n}\II\  \right| \ a,b \in \R^n  \right\}.
\]
Moreover, for any $u\in \BDD(K)$ satisfying
\begin{equation}\label{Eqn:MainThmRigidityRelationINTRO}
\E_d u = \left(a \odot b - \frac{(a \cdot b)}{n}\II \right)\mu 
\end{equation}
for some $a,b\in \R^n$, $\mu\in \mathcal{M}(K;\R^+)$, one of the following two cases holds:
\begin{itemize}
    \item [1)] If $a$ and $b$ are not parallel, then there exist two functions $\psi_1,\psi_2\in C^{\infty}(\R)$ and $v\in \langle a, b\rangle^{\perp} $ such that
\begin{equation}\label{eqn:rigidityMainINTRO}
u(x)=\psi_1(x\cdot a)b+\psi_2(x\cdot b)a+Q(x)+L(x)
\end{equation}
for some $L\in \mathrm{Ker}(\E_d)$ and for some homogeneous third-order degree polynomial $Q$ solving 
\begin{equation}\label{eqn:PolynomialRemainderMainThmINTRO}
\E_d Q= \left(a \odot b - \frac{(a \cdot b)}{n}\II \right) \left((v\cdot x) +\eta(a\cdot x)(b\cdot x)-\vartheta \sum_{j=3}^n  (x\cdot w_j)^2 \right)
\end{equation}
where  $\eta,\vartheta\in \R$, $v\in \langle a,b\rangle^{\perp}$ and $\{w_3,\ldots,w_{n} \}$ is an orthonormal basis of $\langle a,b\rangle^{\perp}$;
\item[2)] If $b=\lambda a$ then there exist functions $F \in \BV_{loc}(\R)$, $\{P_j\}_{j=2}^n\subset W^{1,1}_{loc}(\R)$ with $P_j'\in \BV_{loc}$, $G\in W^{1,1}_{loc}(\R)$ with $G'\in \BV_{loc}(\R)$ such that
\begin{equation}\label{eqn:ShapeUForBetaEqualZeroINTRO}
\begin{split}    
u=& F(a \cdot x) a + \left(\sum_{j=2}^n P_j'(a\cdot x)(w_j\cdot x) + \frac{(w_j\cdot x)^2}{2}G'(a\cdot x)\right)a\\
&-\sum_{j=2}^n ((w_j\cdot x) G(a\cdot x)+P_j(a\cdot x ))w_j+\varrho Q(x)+L(x)
\end{split}
\end{equation}
for $\{w_2,\ldots,w_n\}$ an orthonormal basis of $a^{\perp}$, for some $L\in \mathrm{Ker}(\E_d)$, $\varrho \in \R$ and for some homogeneous third-order degree polynomial $Q$ solving 
\begin{equation}\label{eqn:PolynomialRemainderMainThmBETAZEROINTRO}
\E_d Q= \lambda \left(a \odot a - \frac{|a|^2}{n}\II \right) \left( (w_2\cdot x)^2-(w_3\cdot x)^2\right).
\end{equation}
Moreover, if $n\geq 4$ then $\varrho=0$.
\end{itemize}
\end{theorem}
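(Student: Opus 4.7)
The overall strategy is: reduce to smooth $u$ by mollification, identify $\Ao$ and $\Lambda_{\Ao}$ via the principal symbol of $\E_d$, and then solve the pointwise PDE $\E_d u=Mf$ in a coordinate system adapted to $\langle a,b\rangle$. Convolving with a standard mollifier, $u_\e:=u*\rho_\e$ is smooth and satisfies $\E_d u_\e=Mf_\e$ with $f_\e:=\mu*\rho_\e\geq 0$, so it suffices to produce the decomposition for smooth $u$ and then pass to the limit by compactness, inheriting the $\BV_{\loc}/W^{1,1}_{\loc}$ regularity of the profile functions from the measure status of $\mu$. The principal symbol of $\E_d$ at $\xi\in\Rn$ sends $v$ to $v\odot\xi-\tfrac{v\cdot\xi}{n}\II$, whose image over $v,\xi\in\Rn$ is exactly $\{a\odot b-\tfrac{a\cdot b}{n}\II:a,b\in\Rn\}$. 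A traceless symmetric matrix $N$ lies in this set iff some shift $N+c\II$ has rank $\leq 2$, i.e., the $3\times 3$ principal minors of $N+c\II$ all vanish; combining these cubic-in-$c$ relations and eliminating $c$ by resultants produces polynomial identities of degree four on $N$, whose associated constant-coefficient operators realize an order-four annihilator $\Ao$ with the claimed wave cone.

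\medskip

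For case 1, I would fix an orthonormal basis $\{e_1,\dots,e_n\}$ with $\langle e_1,e_2\rangle=\langle a,b\rangle$ and $e_j=w_j$ for $j\geq 3$, and write $u=(u^1,\dots,u^n)$. The system $\E_d u=Mf$ then splits into: transverse off-diagonal equations $\partial_i u^j+\partial_j u^i=0$ for distinct $i,j\geq 3$; transverse diagonal equations $\partial_j u^j=\tfrac{\dive u}{n}-\tfrac{a\cdot b}{n}f$ for $j\geq 3$, all equal; and planar/mixed equations coupling $u^1,u^2$ to one another and to $u^j$ for $j\geq 3$. From the transverse block the components $(u^3,\dots,u^n)$ are affine in $(x_3,\dots,x_n)$ with coefficients depending only on $(x_1,x_2)$; differentiating $\E_d u=Mf$ along $a$ and $b$ and iterating with commutativity of partials yields a transport identity forcing $f$ to split as the sum of a function of $a\cdot x$ and a function of $b\cdot x$, up to lower-order terms whose primitives are $\psi_1$ and $\psi_2$. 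Setting $R:=u-\psi_1(x\cdot a)b-\psi_2(x\cdot b)a$, this residual satisfies a homogeneous overdetermined system; expanding $R$ in monomials up to degree three and solving modulo $\mathrm{Ker}(\E_d)$ identifies precisely the three-parameter $(v,\eta,\vartheta)$ cubic family of \eqref{eqn:PolynomialRemainderMainThmINTRO}.

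\medskip

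For case 2 ($b=\lambda a$), $M=\lambda(a\odot a-\tfrac{|a|^2}{n}\II)$ acts on $a^\perp$ as multiplication by the scalar $-\tfrac{\lambda|a|^2}{n}$, so the transverse block is much richer. In coordinates with $e_1=a/|a|$ and $e_j=w_j$ for $j\geq 2$, the transverse equations force, at each fixed $x_1$, the components $u^j$ for $j\geq 2$ to be of the form $-(x_j G(x_1)+P_j(x_1))$, i.e., a translation combined with a scalar dilation of the transverse slice; the remaining planar equations then integrate $u^1$ into the stated quadratic-in-transverse profile and identify $F$. The regularity $F\in\BV_{\loc}$ and $P_j,G\in W^{1,1}_{\loc}$ with $P_j',G'\in\BV_{\loc}$ is inherited from the measure $\mu$ by testing against tensor-product test functions and applying Fubini. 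The cubic $Q$ solving \eqref{eqn:PolynomialRemainderMainThmBETAZEROINTRO} is present only in dimension $n=3$: the space of homogeneous cubics whose deviatoric strain is proportional to $(a\odot a-\tfrac{|a|^2}{n}\II)$ times a quadratic collapses to zero for $n\geq 4$, forcing $\varrho=0$.

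\medskip

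The main obstacle, as highlighted in the preceding Remark, is the failure of $\E_d$ to be invariant under the linear shears that in the $\BD$ setting could orthogonalize $a$ and $b$ and annihilate the coupling $a\cdot b$. Consequently $a\cdot b$ must be carried as a live parameter throughout, and the cubic remainder $Q$---which has no analogue in the classical $\BD$ rigidity---arises precisely as the extra space of solutions permitted by the deviatoric constraint but excluded by the full symmetric one. Classifying $Q$ explicitly in both cases, and checking that it exhausts the polynomial obstructions at each degree, is the most delicate combinatorial step of the argument.
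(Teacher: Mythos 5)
Your overall plan — mollify, choose coordinates adapted to $\langle a,b\rangle$, solve the pointwise system for smooth $u$, pass to the limit — is the right scaffolding and matches the paper. But there are genuine gaps in the two steps you treat most lightly.

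First, the characterization of the wave cone via shifts is wrong. You claim that $N\in\mathbb{M}^{n\times n}_{\mathrm{sym}_0}$ lies in $\Lambda_{\Ao}$ iff $N+c\II$ has rank $\le 2$ for some $c\in\R$. This is necessary but not sufficient: the rank-$\le 2$ symmetric matrices of the form $a\odot b$ are constrained, since the two nonzero eigenvalues of $a\odot b$ are $\tfrac{(a\cdot b)\pm|a||b|}{2}$ and $|a\cdot b|\le|a||b|$ forces them to have opposite signs (or one vanish). For example, with $n=4$, $N=\mathrm{diag}(\tfrac12,\tfrac12,0,-1)$ is traceless and $N-\tfrac12\II=\mathrm{diag}(0,0,-\tfrac12,-\tfrac32)$ has rank two, but its two nonzero eigenvalues have the same sign, so it is not $a\odot b$ for any $a,b$; indeed one checks there is no shift that works, so $N\notin\Lambda_{\Ao}$ even though your rank criterion is met. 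Consequently the proposed resultant elimination would produce an operator with the wrong (too large) wave cone. Moreover, even granting the rank criterion, eliminating $c$ from the cubic-in-$c$ $3\times 3$ minor conditions by resultants would not give degree four: the resultant of two monic cubics is isobarically degree $9$ in the remaining variables, hence an annihilator of order much larger than $4$. The paper's construction is different and more direct: since $\mathbb{E}_d[\xi]$ is injective for $n\ge3$, one solves for $u$ in terms of $(\mathbb{E}_d[\xi]u)\xi$ and $\xi^t(\mathbb{E}_d[\xi]u)\xi$, resubstitutes, and clears the $|\xi|^{-4}$ denominator to obtain the fourth-order symbol $\mathbb{A}[\xi]$; the kernel of $\mathbb{A}[\xi]$ is then computed explicitly (Propositions \ref{prop:Annihilator}--\ref{prop:WaveCone}).

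Second, the passage from smooth to $\BDD$ is substantially harder than your sketch suggests, and the reason is specific to $\BDD$. Mollifying preserves $\E_d u_\e=M g_\e$, but $\E_d u$ gives no control on $\dive u$, and the coefficients $\eta_\e,\vartheta_\e$ of the cubic remainder $Q_\e$ are expressed through second derivatives of $f_\e:=\tfrac{1}{n}(\dive u_\e-(a\cdot b)g_\e)$, which a priori need not converge as $\e\to0$. The paper handles this by isolating, in Proposition \ref{prop:PolynomialPartSolutions}, which monomial coefficients of the polynomial solution $Q_\e$ carry $\eta_\e,\vartheta_\e$, and then invoking convergence in $L^1_{\loc}$ of polynomials (Lemma \ref{lem:PolynomPart}) to conclude that $\eta_\e,\vartheta_\e,v_\e$ converge; the floating constants in the parallel case are similarly pinned down by testing the Gauss--Green identity against tensor-product test functions with tuned moments. "Inheriting regularity from the measure status of $\mu$" does not address this convergence of the auxiliary quantities. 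Your remaining observations — that the transverse block forces the $u^j$, $j\ge3$, to be affine in the transverse variables in case 1, that the transverse block is a translation-plus-dilation in case 2, and that the $\varrho Q$ obstruction in case 2 exists only for $n=3$ — are all correct in spirit, but the heart of the rigidity argument (Lemma \ref{lem:tecnico}: the PDE system for $g$ and for $f$, the wave equations solved by D'Alembert, and the explicit integration of $g$) is not present in your sketch beyond the phrase "transport identity."
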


The operator $\Ao$ is explicitly computed in Proposition \ref{prop:Annihilator}.  \\

In both cases, 1) and 2), there is a one-dimensional part with $\BV$ regularity, a part orthogonal to $a,b$ with $W^{1,1}$ regularity, and a polynomial part. The main difference between the $\BD$ and the $\BDD$ case is the polynomial part, which is non-trivial for $\BDD$ maps and arises from the fact that \eqref{eqn:PolynomialRemainderMainThmINTRO}, \eqref{eqn:PolynomialRemainderMainThmBETAZEROINTRO} have non-trivial solutions (explicitly computable).
\begin{remark}
    The constraint $n\geq 3$ is actually quite important, since for $n=2$ the operator $\E_d$ lacks a fundamental property called $\C$-ellipticity, required for several structural properties (as, for instance, the existence of traces, as shown in \cite{breit2017traces}). The difference between $n\geq 4$ and $n=3$, as made explicit in the condition on $\varrho$ in part 2) of Theorem \ref{MainTheoremINTRO}, is actually quite common throughout the proofs. Heuristically speaking, the number of differential equations satisfied by a $u$ solving \eqref{Eqn:MainThmRigidityRelationINTRO} depends on the dimension: for $n=3$ there are simply fewer conditions on $u$, making the proof of the rigidity structure more challenging. For $n=3$, when $a$ is parallel to $b$, we lose an additional equation in \eqref{Eqn:MainThmRigidityRelationINTRO}, resulting in the presence of the polynomial part. These considerations seem to strongly indicate that, for $n=2$, there might be too few equations to constrain the solution of \eqref{eqn:introrigidity} for $M\in \Lambda_{\Ao}$ to have such one-dimensional $\BV$ parts. 
\end{remark}

\subsection{Kernel projection}
While the rigidity result presented above constitutes the main analytical cornerstone of our approach, the development of a comprehensive blow-up strategy in the space $\BDD$ requires a further structural ingredient. In particular, although the following result has a somewhat less pronounced impact compared to the rigidity theorem, it remains an essential tool in iterative blow-up procedure as clarified in Subsection \ref{subsct:theRole}. 
\begin{theorem}\label{MainTheoremINTRO2}
Let $n\geq 3$ and $K$ be a center symmetric convex body. Upon defining the integral operators $\mathrm{s}_{K}, \mathrm{A}_{K}, \gamma_{K},   \mathrm{b}_{K}$ and $\tau_K$ (see Section \ref{sct:KernelProjection} for detail \eqref{eqn:s},\eqref{eqn:A},\eqref{eqn:gamma}, \eqref{eqn:d}), let us define the map
$ \mathcal{R}_{K}:\BDD(\Omega)\rightarrow \mathrm{Ker}(\E_d)$ as
     \begin{equation} 
    \begin{split}
    \mathcal{R}_{K}[u](y):=&\left(\mathrm{A}_{K}[u]+ \gamma_{K}[u] \Id\right) (y-\mathrm{bar}(K))\\ &+(\mathrm{s}_{K}[u] \cdot (y-\mathrm{bar}(K)))(y-\mathrm{bar}(K)) -\mathrm{s}_{K}[u]\frac{|y-\mathrm{bar}(K)|^2}{2}+ \mathrm{b}_{K}[u]
    \end{split}
   \end{equation}
is a linear, bounded operator satisfying $\mathcal{R}_K(L)=L$ for all $L\in \mathrm{Ker}(\E_d)$. As a consequence,
\begin{equation}\label{eqn:PoincaréInequality}
\|u-\mathcal{R}_K[u]\|_{L^1(\Omega)}\leq c |\E_d u|(\Omega) \ \ \ \text{for all $u\in \BDD(\Omega)$}.
\end{equation}
\end{theorem}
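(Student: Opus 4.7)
The plan is to decompose the statement into three tasks: (i) confirm that $\mathcal{R}_K$ is a well-defined bounded linear map into $\mathrm{Ker}(\E_d)$; (ii) verify the projection property $\mathcal{R}_K[L] = L$ for every $L \in \mathrm{Ker}(\E_d)$; (iii) deduce the Poincaré inequality \eqref{eqn:PoincaréInequality}.

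For (i), I would first recall the structure of $\mathrm{Ker}(\E_d)$ for $n \geq 3$: by the classical Liouville theorem on conformal Killing vectors, every $L \in \mathrm{Ker}(\E_d)$ has the form
\[
L(y) = Ay + \gamma y + (s \cdot y)\, y - \tfrac{|y|^2}{2}\, s + b ,
\]
with $A \in \mathfrak{so}(n)$, $\gamma \in \R$, $s, b \in \R^n$, so that this kernel is finite-dimensional of dimension $(n+1)(n+2)/2$. The ansatz defining $\mathcal{R}_K[u]$ is exactly this form, with origin shifted to $\mathrm{bar}(K)$; hence membership in $\mathrm{Ker}(\E_d)$ is automatic, and the linearity and $L^1$-boundedness of $\mathcal{R}_K$ reduce to the corresponding properties of each coefficient operator $\mathrm{s}_K,\,\mathrm{A}_K,\,\gamma_K,\,\mathrm{b}_K$, which follow from their integral definitions. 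For (ii), after translating so that $\mathrm{bar}(K) = 0$, center-symmetry of $K$ makes integrals of odd-parity polynomials over $K$ vanish, decoupling the ansatz into an odd part (accessed by $\mathrm{A}_K,\,\mathrm{s}_K$) and an even part (accessed by $\gamma_K,\,\mathrm{b}_K$). The verification reduces to a finite set of biorthogonality identities: each coefficient operator, evaluated on each Killing generator, should return either the matching parameter or zero.

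For (iii) the natural route is a compactness-contradiction argument. Suppose there exists a sequence $u_k \in \BDD(\Omega)$ with $\|u_k - \mathcal{R}_K[u_k]\|_{L^1(\Omega)} = 1$ and $|\E_d u_k|(\Omega) \to 0$. Setting $v_k := u_k - \mathcal{R}_K[u_k]$, the projection property from (ii) gives $\mathcal{R}_K[v_k] = 0$, while $(v_k)$ is uniformly bounded in $\BDD(\Omega)$ with $|\E_d v_k|(\Omega)\to 0$. The compact embedding $\BDD(\Omega) \hookrightarrow L^1(\Omega)$, available for $n \geq 3$ from the $\C$-ellipticity of $\E_d$ emphasised in the remark above, extracts a limit $v \in L^1(\Omega)$ with $\E_d v = 0$ by lower semicontinuity, hence $v \in \mathrm{Ker}(\E_d)$. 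Passing to the limit in $\mathcal{R}_K[v_k] = 0$ via the $L^1$-continuity of $\mathcal{R}_K$ established in (i) yields $\mathcal{R}_K[v] = 0$, and the projection property then forces $v = 0$, contradicting $\|v\|_{L^1} = 1$.

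The main obstacle is step (ii). The biorthogonality must be engineered into the explicit integral formulas for $\mathrm{s}_K,\,\mathrm{A}_K,\,\gamma_K,\,\mathrm{b}_K$, and although center-symmetry of $K$ provides a convenient parity decoupling, isolating the special-conformal coefficient $s$ from the dilation coefficient $\gamma$ and the translation $b$ requires genuinely new weighted-moment conditions compared with the classical $\BD$ Korn-Poincaré projection, whose kernel contains only affine rigid motions $Ay + b$ and where the special-conformal generators do not appear. All the additional difficulty of $\BDD$ as opposed to $\BD$ is concentrated in this step; once it is carried out, (i) and (iii) are routine consequences.
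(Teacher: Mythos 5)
Your plan matches the paper's proof in essentially all respects. For (ii), the paper does exactly what you describe: it splits $L(y)=(A+\gamma\II)y+b + \bigl((s\cdot y)y - s\tfrac{|y|^2}{2}\bigr) =: M(y)+B(y)$ and verifies that each of $\mathrm{A}_K,\gamma_K,\ss_K,\bb_K$ returns the right coefficient on $M$ and on $B$ separately, with center-symmetry killing the cross terms; the only genuinely new computation relative to the $\BD$ case is showing $\ss_K[B]=s$ via the surface divergence theorem for $\nu_K^t\nabla^{\mathrm{Tan}}B$, and you correctly flag this as where the work is concentrated. For (i), be a little careful about what ``bounded'' means: the coefficient operators are defined as boundary integrals of the trace of $u$ on $\partial K$, so continuity is in the $\BDD$-norm (via the trace theorem of \cite{breit2017traces}), not in the pure $L^1$-norm; the paper makes this the explicit reason for boundedness. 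For (iii), the paper does not run the compactness-contradiction argument itself but cites Proposition \ref{prop:poincare} (an abstract Poincar\'e statement quoted from external references) once linearity, boundedness, and the kernel-fixing property are established; your compactness sketch is the standard proof of that abstract result and is a valid alternative, though again the passage to the limit in $\mathcal{R}_K[v_k]=0$ should be justified via strict (trace) convergence in $\BDD$, not $L^1$-continuity, since $|\E_d v_k|(\Omega)\to 0 = |\E_d v|(\Omega)$ upgrades the $L^1$-convergence to strict convergence and this is what makes the boundary integrals pass to the limit.
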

Note that the Poincaré inequality is actually well-known for a whole series of operators (see \cite[Theorem 3.7]{diening2024sharp}), holding for any linear, bounded kernel projection operator. Here we simply restrict ourselves to computing a specific kernel operator.  

\subsection{The role of Theorems \ref{MainTheoremINTRO} and \ref{MainTheoremINTRO2} in homogenization} \label{subsct:theRole}
As can be seen by looking at the iterative blow-up procedures \cite{caroccia2019integral}, \cite{CVG2025}, \cite{de2016structure}, the most important part relies on having a one-dimensional structure on the $\BV$ part and a specific structure for $\mathcal{R}_K$ (for computational reasons). The other terms in the rigidity theorem can be easily handled due to their better regularity (namely $W^{1,1}$ and polynomial). Let us sketch the $\BD$ case along the milestones in \cite{bouchitte1998global,caroccia2019integral}. \\

As seen in Subsection \ref{sbsct:Energy} Given a local, lower-semicontinuous energy $\F:\BD(\Omega)\times \mathrm{Bor}(\Omega)\rightarrow \R$ we seek an explicit characterization of
\begin{equation}\label{eqn:LIMITENERGY}
  \frac{\d \F(u;\cdot)}{|\E u|}(x_0)  =\lim_{\e\rightarrow 0}\frac{ \F(u;B_\e(x_0))}{ |\E u|(B_\e(x_0))} 
\end{equation}
at $|\E u|$-a.e. $x_0\in \Omega$, that would yield an integral representation of $\F$ (in terms of $|\E u|$, which has a well-known structure). This is done by understanding the blow-ups of $u$ at $x_0$.\\

We have that an annihilator of $\E$ is called the \textit{Saint-Venant operator} (or $\curl \curl^t$, \cite{MSVG2015}), and we denote it by $\mathcal{SV}$. First, by \cite{de2016structure}:
\[
\Lambda_{\mathcal{SV}}:=\left\{a \odot b \  | \ a, b \in \R^n \right\}, \ \ 
\]
Second, if
\[
\E v= (a\odot b) \mu \ \ \text{on $K$}
\]
then the shape of $v$ must be \cite{de2019fine}:
\begin{equation}\label{eqn:RigStrBD}
v(y)=b \psi_1(y\cdot a)+a \psi_2(y\cdot b)+L(y),
\end{equation}
for some $\psi_1,\psi_2\in \BV_{loc}(\R)$, $L\in \mathrm{Ker}(\E)$. Finally, a kernel projection operator fixing $\mathrm{Ker}(\E)$ is
\begin{equation}\label{kernel}
\mathcal{R}'_K[u](x)=\mathrm{A}_K[u] x + \frac{1}{P(K)}\int_{\partial K} u(y)\d \H^{n-1}(y)
\end{equation}
with $\mathrm{A}_K[u]$ defined in \eqref{eqn:A}.
The iterated blow-ups strategy for relaxation can now be described as a repeated application of a rigidity structure theorem as \ref{MainTheoremINTRO} and the knowledge of the operator $\mathcal{R}'_K$ as follows.  
\begin{itemize}
     \item[1)] If $x_0$ is a point of approximate differentiability or a jump point, then blow-ups are given by the approximate differential $e(u)(x)y$ or by a jump function;
     \item[2)] If $x_0$ is neither a jump point nor an approximate differentiability point then, by \cite{de2016structure}, we still have that $\frac{\d \E^s u}{\d |\E^s u|}$ must belong to $\Lambda_{\mathcal{SV}}$;
    \begin{itemize}
        \item[2.1)] We consider a blow-up $v$, $L^1$-limit of $u_{K,\e,x}=\frac{u(x+\e y) - \mathcal{R}'_K[u](y)}{\frac{|\E d|(K_\e (x))}{|K_\e (x)|}}$ (which exists by means of a Poincaré inequality similar to \eqref{eqn:PoincaréInequality} for $\BD$), given $\mathcal{R}'_K$ as in \eqref{kernel}. This blow-up will have the property of satisfying
     \[
     \E v=  (a\odot b)  |\E  v| \ \ \text{on $K$};
     \]
     \item[2.2)] The rigidity \eqref{eqn:RigStrBD} will now yield information on the shape of $v$. In particular (in the $\BD$ case), we have 
     \[
     v(x)=b \psi_1(x\cdot a)+a \psi_2(x\cdot b)+L(x);
     \]  
     \item[2.3)] By selecting a specific point $y$ on the domain, we perform a second blow-up on $v$, which linearizes one direction, resulting in one blow-up of the form (for some $\psi\in \BV_{loc}$)
     \[
     w(z)= \kappa  a  (b\cdot z) + \psi(z\cdot a)b;
     \]
     \item[2.4)] By employing that $\mathcal{R}'_K(w)=0$ and the knowledge of $\mathcal{R}'_K$, the constant $\kappa$ is computed, together with some useful properties of $\psi$;
     \item[2.5)] By a general principle (blow-ups of blow-ups are blow-ups, see also \cite[Proposition 3.6]{CVG2025}), we conclude that $w$ must be a blow-up of $u$ at $x$, namely that along a specific sequence $\{\e_i\}_{i\in \N}$, $u_{K,\e_i,x}\rightarrow w$.
    \end{itemize}
 \end{itemize}
At $|\E u|$-a.e. point we then have either convergence to an affine or a jump function, or convergence to a partially linear map. This information is now all that is required to identify the limit in \eqref{eqn:LIMITENERGY}.\\

Therefore, rigidity theorem \ref{MainTheoremINTRO} and kernel projection \ref{MainTheoremINTRO2} provide the main steps for running the same strategy also on local, lower-semicontinuous energies on $\BDD$. \\

The main advantage of this approach is that it allows the energy $\F$ to depend also on $u$ and not only on $\E_d u$.  

\subsection{Fine properties} 
Once the annihilator is computed, as a corollary we can derive a structure theorem for $\E_d u$ in the spirit of the one holding for $\BV$. In particular, we can prove that $|\E_d u|\ll\H^{n-1}$, and obtain the specific structure of the jump part (see also \cite{breit2017traces}): 
\[
\E_d u =  e_d (u) \L^n + \left([u] \odot \nu_u - \frac{([u] \cdot \nu_u)}{n}\II \right) \H^{n-1}\restr_{J_u}+ \left(a(x) \odot b(x) - \frac{(a(x) \cdot b(x))}{n}\II \right) |\E_d^c u|
\]
where $|\E_d^c u|$ is the Cantor part. Let us spend a few words about a major difference that we actually encounter when looking at the decomposition of $\E_d u$ compared to the decomposition of $\E u$ or $\mathcal{D} u$. Calling $S_u$ the set of points where $u$ is not \textit{approximately continuous} (i.e., $x$ is not a Lebesgue point for $u$), it is known that $\BD$ and $\BV$-maps charge this set almost all on $J_u$, namely $|\E u|(S_u \setminus J_u)=0$ for $\BD$, and $\H^{n-1}(S_u\setminus J_u)=0$ for $\BV$. It is actually an open problem whether the same property holds for $\BDD$ maps: $|\E_d u|(S_u\setminus J_u)=0$? The slicing technique developed in \cite{arroyo2020slicing}, as a generalization of \cite{ambrosio1997fine}, seems not to work for $\E_d u$ due to a specific missing property of the operator: it is not true in general that—not even in some special directions—the one-dimensional slices of a general $\BDD$ function are $\BV$ functions. \\

As a corollary of Theorem \ref{MainTheoremINTRO2}, by applying an argument similar to the one developed in \cite{ambrosio1997fine,kohn1980new}, we can rewrite the integral operators defining $\mathrm{s}_K, \mathrm{A}_K, \mathrm{\gamma}_K$ and $\mathrm{b}_K$ (in \eqref{eqn:s}-\eqref{eqn:d}) as nonlocal interaction integrals against $\E_d u$. This allows us to control the infinitesimal behavior of these  quantities, providing quasi-continuity $|\E_d u|$-a.e. for $\BDD$ functions, a weaker notion than approximate continuity. \\

We therefore report this very natural conjecture about the fine properties of $\BDD$ functions.
\begin{conjecture}
For all $u\in \BDD(\Omega)$ it holds $|\E_d u|(S_u\setminus J_u)=0$.
\end{conjecture}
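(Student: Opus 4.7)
The plan is to argue by blow-up and rigidity at $|\E_d u|$-a.e.\ point of $S_u \setminus J_u$, exploiting the two main results of the paper. By standard differentiation of measures it suffices to fix a point $x_0 \in S_u \setminus J_u$ at which $|\E_d u|$ has positive upper $(n-1)$-density and to show that the set of such $x_0$ is $|\E_d u|$-negligible. The De Philippis--Rindler cell formula forces the polar $\frac{\d \E_d^s u}{\d |\E_d^s u|}(x_0)$ to belong to the wave cone $\Lambda_{\Ao}$ identified in Theorem \ref{MainTheoremINTRO}, so that it has the form $a\odot b - \frac{a\cdot b}{n}\II$ for some $a,b\in \R^n$ depending on $x_0$.

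Extracting blow-ups through the projection $\mathcal{R}_K$ of Theorem \ref{MainTheoremINTRO2} and the Poincaré inequality \eqref{eqn:PoincaréInequality}, along a subsequence $\e_i\to 0$ the rescaled maps $u_{K,\e_i,x_0}$ from \eqref{eqn:blwupSequences} converge in $L^1$ to a limit $v$ satisfying $\E_d v = \bigl(a\odot b - \frac{a\cdot b}{n}\II\bigr)|\E_d v|$ on $K$. Theorem \ref{MainTheoremINTRO} then provides the rigidity decomposition of $v$ into a one-dimensional $\BV$ component (the terms $\psi_1,\psi_2$ in case 1, or $F, P_j, G$ in case 2), a part of regularity $W^{1,1}$ in directions orthogonal to $\langle a,b\rangle$, a homogeneous cubic polynomial $Q$ solving \eqref{eqn:PolynomialRemainderMainThmINTRO} or \eqref{eqn:PolynomialRemainderMainThmBETAZEROINTRO}, and an element of $\mathrm{Ker}(\E_d)$ annihilated by $\mathcal{R}_K$. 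All summands except the one-dimensional $\BV$ component are automatically approximately continuous at the origin, so the approximate continuity (or jump) of $v$ at $0$ is entirely governed by the Lebesgue/jump dichotomy of the scalar $\BV$ functions $\psi_1,\psi_2$ (respectively $F$, $P_j'$, $G$). By the iterated blow-up principle recalled in Subsection \ref{subsct:theRole}, a jump of the $\BV$ component of $v$ at $0$ would propagate back to a jump of $u$ at $x_0$, contradicting $x_0\notin J_u$; one therefore expects the scalar $\BV$ profiles to be Lebesgue-continuous at $0$, which translates into approximate continuity of $v$ at $0$.

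The main obstacle will be to upgrade the pointwise information on a single blow-up limit to approximate continuity of $u$ itself at $x_0$. A priori, different scales $\e_i\to 0$ could select different pairs $(a,b)$ or produce different limits, and the one-dimensional profiles in Theorem \ref{MainTheoremINTRO} are only determined up to the chosen normalization; one must rule out that oscillations of the scalar $\BV$ parts across scales prevent the passage from the $|\E_d u|$-a.e.\ quasi-continuity of $u$, which follows from Theorem \ref{MainTheoremINTRO2} along the lines of \cite{ambrosio1997fine,kohn1980new}, to full approximate continuity. Reconciling the quasi-continuity of the integral averages $\mathrm{s}_K, \mathrm{A}_K, \gamma_K, \mathrm{b}_K$ with the scale-invariant rigidity of the blow-ups is the critical step. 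An additional subtlety, specific to $\BDD$ and absent in $\BD$, is the cubic polynomial $Q$ appearing in \eqref{eqn:PolynomialRemainderMainThmINTRO}--\eqref{eqn:PolynomialRemainderMainThmBETAZEROINTRO}: one has to verify that its third-degree growth does not generate a fictitious mass in $|\E_d u|(B_r(x_0))/r^{n-1}$ capable of masking jump behaviour. Since the slicing route of \cite{arroyo2020slicing,ambrosio1997fine} genuinely fails here, as the paper emphasizes, this combination of wave-cone rigidity, iterated blow-ups and quasi-continuity from the explicit kernel projection seems to be essentially the only viable strategy.
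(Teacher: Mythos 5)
The statement you are attempting to prove is explicitly labeled a \textbf{Conjecture} in the paper and is left open; the authors state in so many words that $|\E_d u|(S_u\setminus J_u)=0$ ``is actually not known in the $\BDD$ context'' and that ``at the current state it seems technically difficult to be established.'' Your proposal is therefore not being compared against a proof in the paper, because there is none; and, read closely, your own text also stops short of a proof: you frankly identify the main obstacle (``to upgrade the pointwise information on a single blow-up limit to approximate continuity of $u$ itself at $x_0$'') and the secondary one (scale-dependence of the blow-up pair $(a,b)$ and of the normalizations) without resolving either. What you have written is a strategy outline, consistent with the paper's own framing, not a proof.

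To make the gap concrete: in the $\BD$ case the identity $|\E u|(S_u\setminus J_u)=0$ in \cite{ambrosio1997fine} is obtained via one-dimensional slicing, not via blow-up rigidity, and the paper explains that this route is closed for $\E_d$ because one-dimensional slices of $\BDD$ functions need not be $\BV$. Your replacement --- blow-up through $\mathcal{R}_K$, rigidity of the limit $v$, then ``propagation back'' --- has two unestablished steps. First, the blow-ups $u_{K,\e,x_0}$ are recentred by $\mathcal{R}_K[u]$ and renormalized by $|\E_d u|(K_\e(x_0))/(|K|\e^{n-1})$; even if the limit $v$ is approximately continuous (or a jump) at $0$, this does not a priori translate into approximate continuity (or a jump) of $u$ at $x_0$, because the affine and quadratic corrections $\mathrm{A}_K$, $\gamma_K$, $\mathrm{s}_K$ may oscillate across scales in a way that Proposition~\ref{propo:HigerOrderDies} controls only in the averaged, quasi-continuity sense --- which is precisely the weaker notion the paper does establish. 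Second, ``a jump of the $\BV$ component of $v$ at $0$ would propagate back to a jump of $u$ at $x_0$'' asserts, without proof, a converse blow-up criterion for membership in $J_u$ for $\BDD$ maps; this would itself require a separate argument, and it is exactly the kind of fine structural statement whose unavailability is the reason the conjecture is open. Until these two steps are supplied, the proposal does not settle the conjecture.
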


The conjecture might be true only for $n\geq 3$, since in $n=2$ the operator fails to be $\mathbb{C}$-elliptic, although it is not clear how important this property is for the $|\E_d u|$-a.e. approximate continuity.

\subsection{Strategy of the proof for the rigidity}
The most important result, where the highest non-triviality lies, is the proof of the rigidity part in Theorem \ref{MainTheoremINTRO}. The proof is quite computational, so we spend a few words to explain the underlying strategy, which is fully developed in Section \ref{sct:ProofofRigidity}. \\

We first consider a general function $u\in C^{\infty}(K;\R^n)$ satisfying
\begin{equation}\label{eqn:rigidregintro}
\E_d u =\left(a  \odot b  - \frac{(a  \cdot b )}{n}\II \right)g
\end{equation}
for some $g\in C^{\infty}(\R^n;\R)$. We exploit this structure of $\E_d u$ and apply Schwarz's Theorem to the differential relation
\[
\nabla \left(\frac{\nabla u - \nabla u^t}{2}\right)_{ij}= \partial_i ((\E_d u)e_j) - \partial_j ((\E_d u)e_i) +\partial_i \left(\frac{\dive(u)}{n}\right) e_j - \partial_j \left(\frac{\dive(u)}{n}\right) e_i 
\]
(which is a variant of the differential relation exploited for the rigidity of $\BD$ maps with constant polar vector). This leads to some general considerations and to a set of PDEs (listed in Lemma \ref{lem:tecnico}) that $u$ must solve. In particular, we give these equations in terms of $u$, $g$, and $f:=\frac{\dive(u)-(a\cdot b)g}{n}$, which quantifies how much $u$ fails to satisfy $\BD$ rigidity, since 
\[
\E u= (a  \odot b) g + f\II
\]
($f=0$ implies that $u$ has the structure of \eqref{eqn:RigStrBD}). To perform this computation, we use specific coordinates. Having established this set of PDEs in Lemma \ref{lem:tecnico}, for $u\in C^{\infty}$ solving \eqref{eqn:rigidregintro}, we proceed to treat separately the cases $a\not \parallel b$ and $a\parallel b$. \\

The first case, treated in Subsection \ref{sbsct:RigidityNonParallel}, which is the most technical one, is handled by showing that the set of PDEs in Lemma \ref{lem:tecnico}—after suitable manipulation—actually leads to wave equations for $\partial_a g$ and $\partial_b g$. The D'Alembert formula then implies the one-dimensionality of $\partial_a g$ and $\partial_b g$ up to a polynomial remainder. This characterizes $g$ as being the sum of two waves plus a polynomial part, and this gives the precise structure \eqref{eqn:rigidityMainINTRO} for $u\in C^{\infty}$. This is done in Subsection \ref{sbsbsct:RigidityRegFunctionsNONPAR}. Now, for a generic $u\in \BDD(\Omega)$ solving \eqref{Eqn:MainThmRigidityRelationINTRO}, we consider a mollification $u_\e:=u \star \varrho_\e \in C^{\infty}$. Since mollification preserves the structure of \eqref{Eqn:MainThmRigidityRelationINTRO}, $u_\e$ will solve \eqref{eqn:rigidregintro} (with $g_\e=\mu\star \varrho_\e$). Thus the mollified $u_\e$ must have the claimed structure — being $C^{\infty}$ — and the main point now consists in showing that such a structure is stable when taking the limit as $\e\to 0$. The most difficult part here is handling the polynomial part $Q_{\e}$. Indeed, the coefficient of $Q_{\e}$ will involve the quantity $\tau_\e=\partial_{12}\left(\frac{\dive(u_\e)-g_\e}{n}\right)$, and such a quantity might in principle have no limit as $\e \to 0$, since $\dive(u_\e)$ might not converge for $\BDD$ maps ($\E_d u$ does not provide any control on $\dive(u)$). However, the constant polar structure of $\E_d u$ suggests that a $u$ solving \eqref{eqn:rigidityMainINTRO} is somewhat more than just $\BDD$. Thus, in Subsection \ref{sbsbsct:FeaturesPolSolNONPAR}, with an explicit computation of (part of) the polynomial remainder $Q$, we gain control on $\tau_\e$ and are able to pass to the limit in Subsection \ref{sbsbsct:ApproximationNONPAR}. \\

The second case, when $a\parallel b$, is treated in \ref{sbsct:RigidityParallel}, where we again argue first for $u\in C^{\infty}$, and we see that the equations given by Lemma \ref{lem:tecnico} are easily integrable, yielding a $g$ which has a one-dimensional part plus a purely polynomial part plus a mixed term, which is a polynomial in $x_j$ for $j\geq 2$ with coefficients depending on $x_1$. Again, once $g$ is identified, the shape of $u$ follows. For a generic function, we again need to pass to the limit, and we need to handle the floating constants that might diverge. We obtain the required control by testing \eqref{eqn:rigidityMainINTRO} against specific test functions. In this part, we find it convenient to treat separately the case $n=3$ and $n\geq 4$ due to a slight difference between the two cases.\\

In the end, being able to pass to the limit somehow amounts to showing that $u$ is actually more regular than $\BDD$  (at least $\BD$). This fact should not be a surprise, since $\E_d u$ has constant polar vector, and a posteriori, as Theorem \ref{MainTheoremINTRO} clarifies, such a $u$ is actually in $\BV$. 

\subsection{Organization of the paper}\label{sbsct:organization}
In Section \ref{sct:Prel}, we introduce the main ingredients required to fully treat the topics contained herein. Section \ref{sct:KernelAnnihilator} is devoted to computing $\mathrm{Ker}(\E_d)$ and the annihilator $\A$. Section \ref{sct:Fine} exploits the annihilator to derive some fine properties of $\E_d u$, in the spirit of \cite{de2019fine}, and lays the basis for the general analysis in Section \ref{sct:ProofofRigidity}, where the proof of the rigidity Theorem \ref{MainTheoremINTRO} is completed. Section \ref{sct:KernelProjection} instead provides the explicit computations leading to the identification of the kernel operator $\mathcal{R}_K$ in Theorem \ref{MainTheoremINTRO2}. Finally, in Section \ref{sct:Vanishing} and the appendix \ref{sct:appendix}, we provide some well-known computations based on the non-local approach from Kohn \cite{kohn1980new}, giving infinitesimal information on the quantities defining $\mathcal{R}_K$ and also implying \textit{quasi-continuity} $|\E u|$-a.e. on $\Omega$.

\subsection{Acknowledgments}MC acknowledges the financial support of PRIN 2022R537CS "Nodal optimization, nonlinear elliptic equations, nonlocal geometric problems, with a focus on regularity," funded by the European Union under Next Generation EU. NVG was supported by the FCT project UID/04561/2025. The authors thank Adolfo Arroyo Rabasa and Franz Gmeindeder for numerous stimulating discussions and valuable feedback on the topic over the past years.

\section{Preliminaries and main result}\label{sct:Prel} 

\subsection{General notations}\label{sbsct:General}
The letter $n$ will always denote the ambient Euclidean space dimension. We will denote by $B_r(x)$ the ball of radius $r$ and centered at $x$. Whenever $x=0$ we just write $B_r$, as well as in the case $r=1$ when we simply write $B(x)$. More in general, given a convex body $K$ we denote by $K_{r}(x):=x+rK$. We denote by $\mathbb{M}^{m\times n}$ the set of $n\times n$ matrices. The notation $e_i$ stands for the $i$-th vector of the canonical basis of $\R^n$, $\Id$ denotes the $n\times n$ identity matrix. With $\mathbb{M}_{sym}^{n\times n}$,  $\mathbb{M}_{sym_0}^{n\times n}$$\mathbb{M}_{skew}^{n\times n}$ we denote the subsets of $\mathbb{M}^{n\times n}$ made respectively by all symmetric matrices, all trace free symmetric matrices and skew-symmetric matrices. The space $\text{Lin}(X;Y)$ denotes the family of all linear maps between the two  vector spaces $X$ and $Y$. Given $v,w\in \R^n$ we will often consider the rank-one matrix $v\otimes w$ acting as $(v\otimes w)z=v (w\cdot z)$ for all $z\in \R^n$ and the matrix 
	\[
	v\odot w:=\frac{v\otimes w+w\otimes v}{2}.
	\]
The notation $\L^n$, $\H^{n-1}$ stand for the $n$-dimensional Lebesgue measure and the $(n-1)$-dimensional Hausdorff measure on $\R^n$ while $\mathcal{M}(\Omega;V)$ is the space of all finite $V$-valued Radon measures on $\Omega$ and all $V$. \\

For $u:\R^n\rightarrow \R^m$ we specify that 
    \begin{equation}
    \partial_j u:=    \left(\begin{array}{c}
    \partial_j u_1\\
    \vdots\\
    \partial_j u_m
    \end{array}\right) \in \R^m,
 \ \ \ \nabla u:=(\partial_1 u,\ldots,
    \partial_n u)\in \mathbb{M}^{m\times n}.
    \end{equation}
For the matrix-valued function $F:\R^n\rightarrow \mathbb{M}^{n\times n}$ we define its gradient as
    \[
    (\nabla F)_{lmj}=(\partial_j F)_{lm}=\partial _j F_{lm},
    \]
while its divergence is define as the row-divergence operator, namely the vector field
\[    
\div F\cdot e_i=\sum_{j=1}^n \partial_j F_{ij}.
\]

\subsection{Maps of bounded deformation}\label{sbsct:MapsBD}
For $u\in C^{\infty}(\R^n;\R^n)$ the \textit{symmetric gradient} is defined as
    \[
   \E u= \frac{\nabla u+\nabla^t u}{2}
    \]
The adjoint operator on $F\in  C^{\infty}(\R^n;\mathbb{M}_{sym}^{n\times n})$, reads
  \[
  \E^*F=\div F.
  \]
We also define, for $\xi\in \R^n$, the symbols $\mathbb{E}[\xi]: \R^n \rightarrow \mathbb{M}_{sym}^{n\times n}$ as 
\begin{align}\label{eqn:SymbolE}
    	\mathbb{E} [\xi] w= \xi\odot w   \ \ \ \text{for all $\xi,w\in \R^n$}.
\end{align}
The symmetric gradient measure $\E u \in \mathcal{M}(\Omega;\mathbb{M}_{sym}^{n\times n})$ is defined as
    \[
    \int_\Om\varphi(x) \cdot \d\E u(x) :=\int_\Om \dive\varphi(x)\cdot u(x) \d x \  \ \forall\  \varphi\in C^{\infty}_c(\Om;\mathbb{M}_{sym}^{n\times n}).
    \]
and the space of function with bounded deformation is
    \[
  \BD(\Omega):=\left\{\left. u\in L^1(\Omega;\R^n) \ \right| \ \E u\in \mathcal{M}(\Omega;\mathbb{M}_{sym}^{n\times n})\right\}\subset \BV(\Omega).
    \]
Several properties are already well-studied for this operator. In particular see \cite{ambrosio1997fine} or \cite{de2019fine} for a more recent approach, closer to the one in this paper. It is well-known that the Kernel of $\E$ is made by anti-symmetric affine transformation:
\begin{align}\label{eqn:KerE}
    \mathrm{Ker}(\E):=\{A x+b \ | \ A\in \mathbb{M}_{skew}^{n\times n}, b\in \R^n\}.
\end{align}
Moreover, for $u\in \BD(\Omega)$ we have that $u$ is $\L^n$-a.e. \textit{approximately differentible} and the gradient decomposition is in force
\[
\E u=e(u)\L^n + ([u]\odot \nu) \H^{n-1}\llcorner_{J_u} + \E^c u
\]
where $e(u)=\frac{\nabla u+\nabla u^t}{2}$, $J_u$ is the $\textit{jump set}$, $[u]$ is the \textit{jump of $u$} and $\E^c u$ is the \textit{Cantor part}. Recent development on this topic \cite{de2016structure} allows also to say that the Cantor part has a very rigid polar vector field
\[
\frac{\d \E^c u}{\d |\E^c u|}(x)=a(x)\odot b(x)
\]
for some $a,b:\Omega\rightarrow \R^n$ $|\E^c u|$ measurable Borel vector-fields. The \textit{Saint-Venant} compatibility condition are also a well-established fact: setting
\begin{equation}\label{eqn:SVcondition}
( \mathcal{SV}(M) )_{jk} := \sum_{i=1}^d \partial_{i k}(M)_{i j} +\partial_{i j}(M)_{i k} - \partial_{j k}(M)_{i i}-\partial_{i i}(M)_{j k}  \ \ \ \text{for all $j,k=1,\ldots,d$}
\end{equation}
for $M\in C^{\infty}(\Omega;\mathbb{M}_{sym}^{n\times n})$ then it holds
\[
\mathcal{SV} (\E u)=0 \ \ \ \text{for all $u\in C^{\infty}(\Omega;\R^n)$}.
\]
Note that its symbol is given by
\[
\mathbb{SV}[\xi]M:=    [( M\xi)\otimes \xi +\xi \otimes  (M\xi)] -  \trace(M)(\xi \otimes \xi) -|\xi|^2  M
\]




\subsection{Maps of bounded deviatoric deformation}\label{sbsct:Maps}
Now we define the set of functions with bounded $\E_d$-variation. For $u\in C^{\infty}(\R^n;\R^n)$ we consider the differential operator
    \[
   \E_d u=\E u -\frac{ \tr e(u)}{n}\II= \E u-\frac{\dive(u)}{n}\II
    \]
The adjoint operator on $F\in  C^{\infty}(\R^n;\mathbb{M}_{sym_0}^{n\times n})$, reads
  \[
  \E^*_dF= \dive (F)
  \]
We also define, for $\xi\in \R^n$, the symbols $\mathbb{E}_{d}[\xi]: \R^n \rightarrow \mathbb{M}_{sym_0}^{n\times n}$ as 
\begin{align}\label{eqn:SymbolEd}
    	\mathbb{E}_d [\xi] w= \xi\odot w - \frac{(\xi\cdot w)}{n}\II \ \ \ \text{for all $\xi,w\in \R^n$}.
\end{align}
We will often use the notation introduced in \cite{breit2017traces}
\[
v\otimes_{\E_d} w:= \mathbb{E}_d [v] w=\mathbb{E}_d [w] v.
\]
We can define then the measure $\E_d \in \mathcal{M}(\Omega;\mathbb{M}_{sym_0}^{n\times n})$ as
    \[
    \int_\Om\varphi(x) \cdot \d\E_d u(x) :=\int_\Om \dive\varphi(x)\cdot u(x) \d x \  \ \forall\  \varphi\in C^{\infty}_c(\Om;\mathbb{M}_{sym_0}^{n\times n}).
    \]
Given this notation we consider the space 
    \[
  \BDD(\Omega):=\left\{\left. u\in L^1(\Omega;\R^n) \ \right| \ \E_d u\in \mathcal{M}(\Omega;\mathbb{M}_{sym_0}^{n\times n})\right\}\subset \BD(\Omega).
    \]
Let us report the following results, proved in \cite[Theorem, 4.20]{breit2017traces} for general differential operators, and clarifying the relation between $\E_d$ and $\mathbb{E}_d$. 
\begin{proposition}
Let $n\geq 3$. Let $u\in \BDD(\Omega)$. Then for any $\H^{n-1}$-rectifiable set $R\subset \Omega$ there exists the trace $u\Big{|}_{R}$. Moreover, for any $u\in \BDD(\Omega), F\in C^{\infty}(\R^n;\mathbb{M}_{sym_0}^{n\times n}) $ it holds
\begin{align} 
 \int_{\Omega}  F\cdot \d \E_{d} u (x)= - \int_{\Omega} u\cdot \E^*_{d} F(x)\d x +\int_{\partial \Omega} (\mathbb{E}_{d}[\nu_{\Omega}](u) \cdot F \d \H^{n-1}(x)  \nonumber\\
 = - \int_{\Omega} u\cdot \E^*_{d} F(x)\d x +\int_{\partial \Omega}  (\nu_{\Omega}\otimes_{\E_d}u ) \cdot F \d \H^{n-1}(x) \label{eqn:GGformula}
\end{align}
\end{proposition}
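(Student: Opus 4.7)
The plan is to reduce the statement to the abstract trace theory for $\C$-elliptic differential operators of gradient type developed in \cite{breit2017traces}, specialized to the deviatoric operator $\E_d$. The proposition is essentially a specialization of that general framework, so the task is to verify its hypotheses for $\E_d$ and then perform the final Gauss--Green integration by parts.

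First, I would verify that $\E_d$ is $\C$-elliptic precisely when $n\geq 3$: the complex symbol $\mathbb{E}_d[\xi]w=\xi\odot w-\tfrac{\xi\cdot w}{n}\Id$ is injective on $w\in\C^n$ for every $\xi\in\C^n\setminus\{0\}$. If $\xi\odot w=\tfrac{\xi\cdot w}{n}\Id$, the right-hand side has rank either $0$ or $n$, whereas the left-hand side has rank at most $2$; for $n\geq 3$ this forces $\xi\cdot w=0$ and $\xi\odot w=0$. Applying the identity $\xi\otimes w=-w\otimes\xi$ to a generic vector shows that $w$ must be a scalar multiple of $\xi$, and substituting back yields $w=0$. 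This confirms the hypothesis of \cite[Theorem 4.20]{breit2017traces}.

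Second, I would verify \eqref{eqn:GGformula} directly for $u\in C^{\infty}(\overline{\Omega};\R^n)$ and $F\in C^{\infty}(\R^n;\mathbb{M}_{sym_0}^{n\times n})$ by a routine integration by parts. Since $F$ is symmetric and trace-free, the pointwise contractions collapse: $F\cdot\nabla u=F\cdot\E u=F\cdot\E_d u$, and on the boundary $F\cdot(u\otimes\nu_\Omega)=F\cdot(\nu_\Omega\odot u)=F\cdot(\nu_\Omega\otimes_{\E_d}u)$. The classical divergence theorem applied to the vector field $Fu$ then yields
\[
\int_\Omega F\cdot\E_d u\,dx=-\int_\Omega u\cdot\Div F\,dx+\int_{\partial\Omega} F\cdot(\nu_\Omega\otimes_{\E_d}u)\,d\H^{n-1},
\]
which is \eqref{eqn:GGformula} in the smooth case.

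Third, for a general $u\in\BDD(\Omega)$ I would invoke the smooth approximation theorem available under $\C$-ellipticity (\cite{breit2017traces}): there is a sequence $\{u_k\}\subset C^{\infty}(\overline{\Omega};\R^n)$ with $u_k\to u$ in $L^1(\Omega;\R^n)$, $|\E_d u_k|(\Omega)\to|\E_d u|(\Omega)$ (strict convergence as measures), and $u_k|_{\partial\Omega}\to u|_{\partial\Omega}$ in $L^1(\partial\Omega;\R^n)$. Passing to the limit in the smooth identity delivers \eqref{eqn:GGformula} for every $u\in\BDD(\Omega)$. The existence of the trace on an arbitrary $\H^{n-1}$-rectifiable $R\subset\Omega$ is obtained by localization: up to an $\H^{n-1}$-negligible remainder, $R$ is covered by countably many Lipschitz graphs, and each such graph separates $\Omega$ locally into two Lipschitz subdomains on which the boundary trace already constructed applies, with one-sided traces coinciding $\H^{n-1}$-a.e.\ by a standard argument. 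The main obstacle is the simultaneous smooth approximation with strict convergence of $\E_d u_k$ and $L^1$-convergence of the boundary traces; this is the genuinely nontrivial content delivered by \cite{breit2017traces} and it is exactly where $\C$-ellipticity is essential, thereby also accounting for the sharp restriction $n\geq 3$ noted after Theorem \ref{MainTheoremINTRO}.
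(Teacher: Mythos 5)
Your proposal is correct and follows essentially the same route as the paper: the paper itself offers no independent proof but simply cites the $\C$-elliptic trace theory of Breit--Diening--Gmeineder, and you reconstruct exactly the steps needed to invoke it — verification of $\C$-ellipticity of $\E_d$ for $n\geq 3$ (the content of the remark immediately following the proposition), the smooth Gauss--Green identity using that $F$ is symmetric and trace-free so $F\cdot\nabla u = F\cdot\E_d u$ and $F\cdot(u\otimes\nu) = F\cdot(\nu\otimes_{\E_d} u)$, and strict smooth approximation with boundary-trace convergence to pass to general $u\in\BDD(\Omega)$. One phrase worth tightening: ``with one-sided traces coinciding $\H^{n-1}$-a.e.'' should refer to consistency of the trace definition across overlapping Lipschitz charts, not to the two one-sided limits $u^{\pm}$ on $R$ agreeing, since the latter fails on the jump set; as written it is ambiguous and could be misread as a false claim.
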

\begin{remark}
In \cite{breit2017traces} the result is proven for any $\C$-elliptic operator $\A : C^{\infty}(\Omega;\R^n)\rightarrow C^{\infty}(\Omega:V)$ for some vector space $V$. The notion of $\mathbb{C}$-ellipticity can be stated as the \textit{injectivity of the symbol $\AA[\xi]:\mathbb{C}^n\rightarrow V+iV$ as a linear map from $\C^n$ into $V+iV$, for all $\xi \neq 0$}. Actually, $\mathbb{C}$-ellipticity is a very important property in order to ensure structural properties to the operator and its functional spaces. In particular for instance, the existence of the trace cannot be guaranteed for non $\mathbb{C}$-elliptic operator. We refer the interested reader to  \cite{breit2017traces} and the literature therein for more details on $\mathbb{C}$-ellipticity and functional spaces. 
\end{remark}
\begin{remark}\label{rmk:EdNotCell}
 It is a simple computation to show that $\mathbb{E}_d[\xi]:\mathbb{C}^n\rightarrow \mathbb{M}_{sym_0}^{n\times n}+i\mathbb{M}_{sym_0}^{n\times n}$ is $\mathbb{C}$-elliptic for all $n\geq 3$ while it is not $\mathbb{C}$-elliptic for $n=2$. This is probably one of the main reason for the failing of a lot of properties in $n=2$,  as for instance the existence of trace (see \cite{breit2017traces}). Also our rigidity Theorems \ref{thm:MainRigidityNonParallel} and \ref{thm:MainRigidityParallel} are proven for $n=2$. It is not clear though wether $n=2$ still allows for a rigidity structure. Our computation - and the full proofs - seem to strongly suggest that probably there are not enough equation in dimension $2$, to derive a strong rigidity structure on a $u$ with constant polar vector in the wave cone. 
\end{remark} 
 
A simple algebraic computations yields the following Leibniz rule, together with a useful property of the adjoint operator:
    \begin{align}
        \E_{d} (\varphi  u)&=  \varphi \E_{d} u + \mathbb{E}_{d}[\nabla \varphi] u & \varphi\in C^{\infty}(\R^n), u\in C^{\infty}(\R^n;\R^n) \label{eqn:LeibnizRule}\\
        (\mathbb{E}_{d}[\xi] z)\cdot M&= z\cdot\mathbb{E}^*_{d}[\xi] M  &  z\in  \R^n, M\in \mathbb{M}_{sym_0}^{n\times n}, \xi\in \R^n.\label{eqn:AdjointProp}
    \end{align}
 
\subsection{Poincaré-Sobolev and compactness}\label{sbsct:Poincare}
Finally we underline that as a consequence of several general results in literature on $\mathbb{C}$-elliptic operator we have also the following Poincaré-Sobolev inequality. In the following, $\Pi^{U}:L^1(U;\R^m)\rightarrow \mathrm{Ker}(\E_d)$ stands for a bounded linear projection operator onto the kernel of $\E_d$. We refer to \cite[Proposition 2.5]{gmeineder2017critical}, \cite[Proposition 2.5, Remark 2.6]{CVG2025}

\begin{proposition}[Poincaré-Sobolev inequality]\label{prop:sobBalls}
Let $n\geq 3$ and $K$ be a center-symmetric convex set. Then there exists a constant $c$ depending on $n$ and $K$ only such that
\begin{align}\label{SGNforA}
      \|u-\Pi^{K_r(x)}u\|_{L^\frac{n}{n-1}(K_r(x);\R^m)}\leq c |\E_d u|(\overline{K_r(x)}) \ \ \ 
\end{align}
for all $x\in \R^d$, $r>0$ and $u\in \BDD(\R^n)$. 
\end{proposition}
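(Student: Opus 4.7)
The statement follows from well-established results on $\mathbb{C}$-elliptic operators combined with a standard scaling argument, so the proof is mostly a matter of verifying applicability and handling the rescaling. My plan has three parts.

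\textbf{Reduction to the unit scale.} First I would reduce to the case $x=0$, $r=1$, $K$ fixed. Setting $v(y):=u(x+ry)$, a direct change of variable gives $|\E_d v|(K) = r^{1-n}|\E_d u|(K_r(x))$ and $\|v\|_{L^{n/(n-1)}(K)} = r^{1-n}\|u\|_{L^{n/(n-1)}(K_r(x))}$, so both sides of the inequality scale with the same factor $r^{1-n}$. Moreover, if $\Pi^K$ is any bounded linear projection onto $\mathrm{Ker}(\E_d)$ on $K$, then pulling back via $y\mapsto x+ry$ defines a corresponding bounded linear projection $\Pi^{K_r(x)}$ on $K_r(x)$ (since $\mathrm{Ker}(\E_d)$ is invariant under affine rescaling). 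Hence it suffices to prove the inequality on the unit convex body $K$.

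\textbf{Poincaré-Sobolev on $K$ via $\mathbb{C}$-ellipticity.} For $n\geq 3$, the symbol $\mathbb{E}_d[\xi]\colon \mathbb{C}^n\to \mathbb{M}_{\mathrm{sym}_0}^{n\times n}+i\mathbb{M}_{\mathrm{sym}_0}^{n\times n}$ is injective for every $\xi\neq 0$ (Remark \ref{rmk:EdNotCell}), i.e., $\E_d$ is $\mathbb{C}$-elliptic. This places $\E_d$ in the general framework of \cite{gmeineder2017critical}, where a Sobolev-type inequality of the form
\[
\inf_{L\in \mathrm{Ker}(\E_d)}\|u-L\|_{L^{n/(n-1)}(K)}\leq c\,|\E_d u|(\overline{K})
\]
is derived for all $u\in \BDD(K)$; see also \cite[Proposition 2.5, Remark 2.6]{CVG2025} for the analogous argument adapted to arbitrary center-symmetric convex bodies. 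The two key inputs behind this inequality, in order: (i) the kernel $\mathrm{Ker}(\E_d)$ is finite-dimensional (as will be explicitly computed later via the annihilator $\A$), a consequence of $\mathbb{C}$-ellipticity; (ii) a compactness statement asserting that $\BDD(K)\hookrightarrow L^{n/(n-1)}(K)$ is compact modulo $\mathrm{Ker}(\E_d)$, obtained from the $\mathbb{C}$-elliptic Korn inequality combined with the critical Sobolev embedding.

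\textbf{Replacing the infimum by a fixed linear projection.} The inequality with $\inf_L$ is equivalent, modulo changing the constant $c$, to the same inequality with any fixed bounded linear projection $\Pi^K$ onto the finite-dimensional subspace $\mathrm{Ker}(\E_d)$. Indeed, write $u=(u-L_*)+L_*$ for the minimizer $L_*$ and use $\Pi^K L_*=L_*$ together with boundedness of $\Pi^K$ on $L^{n/(n-1)}(K)$ to estimate $\|u-\Pi^K u\|_{L^{n/(n-1)}(K)}=\|(u-L_*)-\Pi^K(u-L_*)\|_{L^{n/(n-1)}(K)}\leq (1+\|\Pi^K\|)\,\|u-L_*\|_{L^{n/(n-1)}(K)}$. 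Pulling back to $K_r(x)$ via the scaling of the first step yields \eqref{SGNforA}.

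\textbf{Main obstacle.} The delicate ingredient is part (ii) above, the compact embedding modulo kernel. This is exactly the point where $\mathbb{C}$-ellipticity (and hence the restriction $n\geq 3$) is used crucially, and it is the reason the theorem is quoted rather than proved from scratch: one relies on the general $\mathbb{C}$-elliptic Korn inequality of Gmeineder--Van Schaftingen type to pass from $|\E_d u|(\overline{K})$ to first-order control on $u$ modulo the kernel, and then on the critical Sobolev embedding to upgrade the integrability from $L^1$ to $L^{n/(n-1)}$.
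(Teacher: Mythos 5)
Your proof is correct and follows essentially the same route as the paper, which quotes this Proposition without a proof of its own, citing precisely the general $\mathbb{C}$-elliptic literature (\cite{gmeineder2017critical}, \cite{CVG2025}) that you invoke; your scaling reduction to the unit convex body and your observation that the infimum over $\mathrm{Ker}(\E_d)$ can be replaced by a fixed bounded projection at the cost of a factor $1+\|\Pi^K\|$ are both standard and correct, and in fact make explicit what the paper leaves implicit.

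One inaccuracy worth flagging in your expository aside: in item (ii) you assert that $\BDD(K)\hookrightarrow L^{n/(n-1)}(K)$ is compact modulo $\mathrm{Ker}(\E_d)$. This is false. As for $\BV$, the embedding into the critical Lebesgue space $L^{n/(n-1)}$ is continuous but not compact, and modding out by the finite-dimensional kernel does not restore compactness, since the failure comes from concentration and not from the presence of kernel directions. The standard mechanisms are either to prove the Poincaré inequality modulo the kernel in $L^1$ (or any subcritical $L^p$) by a compactness argument, and then upgrade the exponent using the continuous critical embedding for $\mathbb{C}$-elliptic operators, or to prove the critical Sobolev inequality directly via the Van Schaftingen/Gmeineder strategy. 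Since you cite the result rather than derive it, this imprecision does not invalidate your proof, but the mechanism as literally stated would fail.
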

The space $\BDD(\Omega)$, endowed with the norm $\|u\|_{\BDD}:=|\E_d u|(\Omega)+\|u\|_{L^1}$, is a Banach space. The Poincaré-Sobolev inequality in \ref{prop:sobBalls} provide a standard argument, by following for instance the ideas in \cite{kohn1980new} (combined with the extension argument in \cite{gmeineder2019} to prove the following compactness Theorem.
\begin{theorem}[Compactness Theorem]\label{thm:cmp}
Let $\Omega\subset \R^n$ be an open bounded set with Lipschitz boundary and $n\geq 3$ be a first order linear operator. Let $\{u_k\}_{k\in \N}\subset \BDD(\Omega)$. Suppose that
\[
\sup_{k\in \N}\{\|u_k\|_{\BDD}\}<+\infty.
\]
Then there exists $u\in \BDD(\Omega)$ and a subsequence $h(k)$ such that $u_{h(k)}\rightarrow u$ in $L^1$ and $\E_d u_{h(k)} \wt \E_d u$.
\end{theorem}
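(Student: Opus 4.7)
The plan is to combine weak-$*$ compactness of Radon measures for the derivative with an $L^1$-compactness argument for the functions themselves, following the classical strategy of Kohn \cite{kohn1980new} adapted via the extension machinery of Gmeineder \cite{gmeineder2019}, which applies since $\E_d$ is $\C$-elliptic for $n \geq 3$ (Remark \ref{rmk:EdNotCell}).

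First, since $\sup_k |\E_d u_k|(\Omega) < \infty$, by Banach--Alaoglu applied to finite $\mathbb{M}_{\mathrm{sym}_0}^{n\times n}$-valued Radon measures one extracts a subsequence (not relabeled) such that $\E_d u_k \wt \mu$ for some $\mu \in \M(\Omega;\mathbb{M}_{\mathrm{sym}_0}^{n\times n})$. Next, I would invoke the extension operator of \cite{gmeineder2019} to produce $\tilde u_k \in \BDD(\R^n)$ compactly supported in a fixed open set $\Omega'$ with $\Omega \cc \Omega'$, such that $\tilde u_k|_\Omega = u_k$ and $\|\tilde u_k\|_{\BDD(\Omega')} \leq C\|u_k\|_{\BDD(\Omega)}$ uniformly in $k$.

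The heart of the argument is $L^1$-compactness of $\{\tilde u_k\}$ in $L^1(\Omega')$. By Fréchet--Kolmogorov it suffices to bound the modulus of continuity of translations uniformly in $k$. Introducing a standard mollifier $\rho_\e$, I would split
\[
\|\tilde u_k - \tilde u_k(\cdot + h)\|_{L^1} \leq 2 \|\tilde u_k - \tilde u_k \star \rho_\e\|_{L^1} + \|\tilde u_k \star \rho_\e - (\tilde u_k \star \rho_\e)(\cdot + h)\|_{L^1},
\]
where the second term is bounded by $|h|\|\nabla(\tilde u_k \star \rho_\e)\|_{L^1}$ and thus vanishes as $h \to 0$ for each fixed $\e$. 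The uniform smallness of the first term is obtained by covering $\Omega'$ with a grid of convex bodies of diameter comparable to $\e$, applying Proposition \ref{prop:sobBalls} on each to control $\tilde u_k$ modulo kernel projections, and summing to produce
\[
\|\tilde u_k - \tilde u_k \star \rho_\e\|_{L^1(\Omega')} \leq C\e \, |\E_d \tilde u_k|(\Omega') \leq C'\e\, \|u_k\|_{\BDD(\Omega)}.
\]
A diagonal subsequence would then converge in $L^1(\Omega)$ to some $u \in L^1(\Omega)$.

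The limit will be identified by passing to the limit in the integration by parts
\[
\int_\Omega \varphi \cdot \d\E_d u_{h(k)} = -\int_\Omega u_{h(k)} \cdot \dive \varphi \, \d x, \qquad \varphi \in C^\infty_c(\Omega; \mathbb{M}_{\mathrm{sym}_0}^{n\times n}),
\]
using $L^1$-convergence of $u_{h(k)}$ on the right and weak-$*$ convergence $\E_d u_{h(k)} \wt \mu$ on the left, which yields $\E_d u = \mu$ and hence $u \in \BDD(\Omega)$. The main technical obstacle will be the uniform bound $\|\tilde u_k - \tilde u_k \star \rho_\e\|_{L^1} \lesssim \e\, \|u_k\|_{\BDD}$: Proposition \ref{prop:sobBalls} only controls $u_k$ \emph{modulo} the kernel projection on each cell, so one has to verify that the kernel corrections either telescope across overlapping cells or average out against the mollifier — exactly the technical point at the heart of Kohn's original argument, which transfers to the $\C$-elliptic operator $\E_d$.
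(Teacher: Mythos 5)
The paper gives no proof of this statement; it points only to the Poincar\'e--Sobolev inequality of Proposition~\ref{prop:sobBalls}, Kohn's argument, and the extension operator of \cite{gmeineder2019}, and your sketch reproduces precisely this indicated route: weak-$*$ compactness for the measures, extension to $\R^n$, Fr\'echet--Kolmogorov for $L^1$-precompactness, and identification of the limit by integration by parts.

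The one step that needs more care is the estimate
\[
\|\tilde u_k - \tilde u_k\star\rho_\e\|_{L^1(\Omega')} \leq C\e\,|\E_d\tilde u_k|(\Omega'),
\]
which does not fall directly out of Proposition~\ref{prop:sobBalls}: mollification is not a kernel projection, and since $|\E_d u|$ does not control $\|Du\|_{L^1}$ (Ornstein non-inequality), one cannot argue via first differences as for $\BV$. You correctly flag that the cell-wise kernel corrections must be compared with the mollifier, but a more transparent way to close this (and the one typically used for $\BD$-type spaces) avoids the mollifier altogether: fix a grid of cubes $Q_i$ of side $\e$, set $v_k^\e := \sum_i \chi_{Q_i}\,\Pi^{Q_i}\tilde u_k$, and use Proposition~\ref{prop:sobBalls} on each cell to get $\|\tilde u_k - v_k^\e\|_{L^1} \leq C\e\,|\E_d\tilde u_k|(\Omega')$ directly. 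For each fixed $\e$, the sequence $\{v_k^\e\}_k$ lives in a fixed finite-dimensional subspace of $L^1(\Omega')$ (piecewise Killing fields subordinate to the grid) and is uniformly bounded there, hence precompact; a diagonal argument over $\e\to0$ then yields $L^1$-precompactness of $\{u_k\}$ without ever comparing mollification with kernel projections. This replaces the part of your argument you yourself identify as the delicate one, and the remainder of your proposal goes through unchanged.
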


The notation $\E_d u_{h(k)} \wt \E_d u$ stands for the standard \textit{weak$^*$ convergence} of Radon measures (see \cite{evans2018measure} or \cite{Maggi}).\\

As a consequence of \cite[Theorem 3.7]{diening2024sharp} or \cite[Proposition 2.8]{CVG2025} we have the following Poincaré inequality.
\begin{proposition}[Poincaré inquality]\label{prop:poincare}
Let $n\geq 3$, $K\subset \R^n$ be a fixed convex set. Let $\mathcal{R}: L^{1}(K;\R^n)\rightarrow \mathrm{Ker}(\E_d)$ be a linear, bounded operator such that $\mathcal{R}(L)=L$ for all $L\in  \mathrm{Ker}(\E_d)$. Then there exists a uniform constant $c=c(\mathcal{R},K,n)$ depending on $n,\mathcal{R}$ and $K$ such that
    \begin{align}\label{Poincare}
    \|u-\mathcal{R}[u]\|_{L^{1}(K;\R^n)}\leq  c\diam(K)|\E_d u|(K).
    \end{align}
\end{proposition}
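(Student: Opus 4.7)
The plan is to follow the standard contradiction-plus-compactness scheme, which is the usual route for Poincaré inequalities associated with a bounded linear projection onto a finite dimensional kernel. First I would argue by contradiction: assume there exists a sequence $\{u_k\}\subset \BDD(K)$ with
\[
\|u_k-\mathcal{R}[u_k]\|_{L^1(K;\R^n)}=1, \qquad |\E_d u_k|(K)\to 0.
\]
Replacing $u_k$ with $v_k:=u_k-\mathcal{R}[u_k]$ and using that $\mathcal{R}$ fixes $\mathrm{Ker}(\E_d)$, hence $\mathcal{R}\circ \mathcal{R}=\mathcal{R}$, I obtain $\mathcal{R}[v_k]=0$, $\|v_k\|_{L^1}=1$, and $|\E_d v_k|(K)=|\E_d u_k|(K)\to 0$ (since the kernel is annihilated by $\E_d$).

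Next I would invoke the Poincaré--Sobolev inequality in Proposition \ref{prop:sobBalls} applied to a symmetric convex set containing $K$ (or directly to $K$ via an extension argument à la \cite{gmeineder2019}) to deduce that, for a bounded linear projection $\Pi^K$ onto $\mathrm{Ker}(\E_d)$,
\[
\|v_k-\Pi^K v_k\|_{L^{\frac{n}{n-1}}(K)}\leq c\,|\E_d v_k|(K)\to 0.
\]
By Hölder this gives $\|v_k-\Pi^K v_k\|_{L^1(K)}\to 0$, so $\{\Pi^K v_k\}$ is bounded in $L^1$. Since $\mathrm{Ker}(\E_d)$ is a finite-dimensional linear space (all norms being equivalent on it) I can extract a subsequence with $\Pi^K v_k\to L$ in $L^1(K;\R^n)$, hence $v_k\to L$ in $L^1$.

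Now the passage to the limit closes the loop: by lower semicontinuity of the total variation with respect to $L^1$ convergence (equivalently, distributional convergence of $\E_d v_k$ to $\E_d L$), $|\E_d L|(K)=0$, so $L\in \mathrm{Ker}(\E_d)$. Continuity of $\mathcal{R}:L^1\to L^1$ yields $\mathcal{R}[L]=\lim_k \mathcal{R}[v_k]=0$, while $\mathcal{R}[L]=L$ because $L\in \mathrm{Ker}(\E_d)$; thus $L=0$. This contradicts $1=\lim_k\|v_k\|_{L^1}=\|L\|_{L^1}=0$.

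Finally, to produce the explicit scaling with $\diam(K)$, I would first carry out the previous argument on a rescaled set of unit diameter to obtain a constant $c_0=c_0(\mathcal{R},K,n)$ depending only on the shape of $K$, the operator $\mathcal{R}$, and $n$, and then rescale back: if $K=\diam(K)\tilde K$ with $\tilde K$ of unit diameter, the substitution $u(x)=\tilde u(x/\diam(K))$ transforms $\|u-\mathcal{R}[u]\|_{L^1(K)}$ and $|\E_d u|(K)$ with homogeneities $\diam(K)^{n+1}$ and $\diam(K)^{n-1}$ respectively, picking up exactly one factor of $\diam(K)$ in the inequality. The most delicate point, as always in this scheme, is ensuring that the limit $L$ indeed lies in $\mathrm{Ker}(\E_d)$ and that $\mathcal{R}$ acts continuously on the weakly converging piece; both facts follow from the hypotheses (boundedness of $\mathcal{R}$ in $L^1$ and lower semicontinuity of $|\E_d \cdot|$), so no further technical ingredient beyond the compactness Theorem \ref{thm:cmp} and Proposition \ref{prop:sobBalls} is needed.
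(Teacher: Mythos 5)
The paper does not actually supply its own proof of this proposition: it is deduced purely by citing \cite[Theorem 3.7]{diening2024sharp} and \cite[Proposition 2.8]{CVG2025}. Your contradiction--plus--compactness argument is the standard route underlying those references, and in outline it is correct: passing to $v_k:=u_k-\mathcal{R}[u_k]$, using idempotence of $\mathcal{R}$ to get $\mathcal{R}[v_k]=0$, invoking Poincar\'e--Sobolev (Proposition~\ref{prop:sobBalls}) to control $v_k-\Pi^K v_k$ in $L^1$, using finite-dimensionality of $\mathrm{Ker}(\E_d)$ to extract a convergent subsequence $\Pi^K v_k\to L$, and concluding by lower semicontinuity of the total variation that $L\in\mathrm{Ker}(\E_d)$ and by continuity that $\mathcal{R}[L]=0$, hence $L=0$, contradicting $\|L\|_{L^1}=1$. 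All the key ingredients (finite-dimensionality of the kernel, $L^1$-lower semicontinuity of $|\E_d\cdot|$, boundedness of $\mathcal{R}$) are correctly identified and correctly used.

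Two minor points are worth flagging. First, the scaling exponents you quote are inconsistent: with $u(x)=\tilde u(x/\diam K)$ one has $\|u-\mathcal{R}[u]\|_{L^1(K)}=\diam(K)^{n}\,\|\tilde u-\tilde{\mathcal{R}}[\tilde u]\|_{L^1(\tilde K)}$ and $|\E_d u|(K)=\diam(K)^{n-1}|\E_d\tilde u|(\tilde K)$, i.e.\ the first exponent should be $n$, not $n+1$; the ratio is then $\diam(K)$, which is what you correctly assert in words, so this is only an arithmetic slip. You should also say explicitly that the transplanted operator $\tilde{\mathcal{R}}[\tilde u](y):=\mathcal{R}[u](\diam(K)\,y)$ is again a bounded linear projection onto $\mathrm{Ker}(\E_d)$ — this uses that $\mathrm{Ker}(\E_d)$ is stable under dilations, which holds for the Killing-type fields in Proposition~\ref{prop:ker}. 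Second, Proposition~\ref{prop:sobBalls} is stated for center-symmetric $K$, and you wave at an extension argument to cover general convex $K$; this is the right idea and is covered by the extension result in \cite{gmeineder2019}, but it is a genuine step that one cannot entirely omit. Neither of these affects the correctness of your overall strategy.
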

\section{Kernel and annihilator}\label{sct:KernelAnnihilator}
In  this section we present the Kernel and the Annihilator of $\E_d$.

\subsection{Kernel}\label{sbsct:Kernel}
It is a well-known fact that the Kernel of $\E_d$ is made by Killing vector fields
\[
\mathrm{Ker}(\E_d):= \left\{L(y)=(A+ \gamma \mathrm{Id})y + s\frac{|y|^2}{2}-(s \cdot y) y +b\ \left| A\in \mathbb{M}^{d\times d}_{skew}, \ s,b\in \R^n, \ \gamma \in \R \right. \right\}.
\]
Since the proof of the Kernel structure and the ingredients required, are quite enlightening in order to deepen the approach used to prove rigidity we here present the proof of this result.\\

Set, for $u\in C^{\infty}(A;\R^n)$ the quantity $Wu:=\frac{\nabla u - \nabla u^t}{2}$ and observe that
\begin{equation}\label{eqn:Fond}
\nabla (Wu)_{ij}= \partial_i ((\E_d u)e_j) - \partial_j ((\E_d u)e_i) +\partial_i \left(\frac{\dive(u)}{n}\right) e_j - \partial_j \left(\frac{\dive(u)}{n}\right) e_i.
\end{equation}

\begin{proposition}[Kernel structure]\label{prop:ker}
Let $n\geq 3$, $u\in C^{\infty}(\Omega;\R^n)$ with $\Omega\subseteq \R^n$ a connected set. Suppose that $\E_d u =0$. Then there exists $s, b\in \R^n$, $A\in \mathbb{M}^{n\times n}_{skew}$, $\gamma\in \R$ such that
    \[
    u(y)= (A+\gamma \II)y + s\frac{|y|^2}{2}-(s\cdot y) y +b.
    \]
\end{proposition}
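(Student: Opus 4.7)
The plan is to combine the fundamental identity \eqref{eqn:Fond} with Schwarz's theorem in order to reduce the problem to the classification of all $u$ whose symmetric gradient $\E u$ is a scalar multiple of the identity. Setting $f := \dive(u)/n$, the hypothesis $\E_d u = 0$ collapses \eqref{eqn:Fond} into the simpler relation
\[
\partial_k (Wu)_{ij} = \partial_i f\, \delta_{kj} - \partial_j f\, \delta_{ki}.
\]

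Next, I would apply $\partial_l \partial_k = \partial_k \partial_l$ to $(Wu)_{ij}$ and substitute the above identity on both sides. This yields the algebraic consistency condition
\[
\partial_l \partial_i f\, \delta_{kj} - \partial_l \partial_j f\, \delta_{ki} = \partial_k \partial_i f\, \delta_{lj} - \partial_k \partial_j f\, \delta_{li}
\]
for all $i,j,k,l$. The dimensional restriction $n \geq 3$ enters here at two distinct points. First, picking $i \neq j$, $k = j$ and $l \notin \{i,j\}$ kills every Kronecker symbol on the right-hand side and forces $\partial_l \partial_i f = 0$ for all $l \neq i$. Second, picking $i \neq j$, $k = i$ and $l = j$ gives $\partial_i^2 f + \partial_j^2 f = 0$; combining three such identities over a triple of distinct indices (available precisely because $n \geq 3$) produces $\partial_i^2 f = 0$ for every $i$. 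Consequently the Hessian of $f$ vanishes identically, and since $\Omega$ is connected we conclude that $f(y) = \gamma - s\cdot y$ for some constants $\gamma \in \R$ and $s \in \R^n$.

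Once $f$ has been identified, the residual task is to determine all $u \in C^\infty(\Omega;\R^n)$ satisfying $\E u = (\gamma - s\cdot y)\II$. I would exhibit the explicit particular solution
\[
u_0(y) := \gamma y + s\frac{|y|^2}{2} - (s\cdot y)\,y,
\]
for which a direct computation gives $\E u_0 = (\gamma - s\cdot y)\II$. Then $\E(u - u_0) = 0$, and by the classical characterization of the kernel of $\E$ recalled in \eqref{eqn:KerE} we obtain $u - u_0 = Ay + b$ with $A \in \mathbb{M}_{skew}^{n\times n}$ and $b \in \R^n$, producing exactly the claimed form for $u$. The main obstacle is really the careful index bookkeeping in the Schwarz step: it is conceptually elementary, but the hypothesis $n \geq 3$ is used in a genuine way to extract both the off-diagonal and diagonal vanishing of $D^2 f$; all remaining steps amount to verifying an explicit particular solution and invoking the known structure of $\mathrm{Ker}(\E)$.
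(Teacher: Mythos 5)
Your proof is correct and follows essentially the same route as the paper: you derive the same Schwarz/curl consistency condition on the Hessian of $f=\dive(u)/n$ (the paper phrases it as "taking the curl" of $\nabla(Wu)_{ij}$, but the index identity is the same), use $n\geq 3$ in the same two places to kill the off-diagonal and diagonal Hessian entries, and finish by exhibiting the same quadratic particular solution and invoking the known $\mathrm{Ker}(\E)$. The only cosmetic difference is that you carry the argument in Kronecker notation while the paper names a third index $m$ explicitly.
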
 
\begin{proof}
Because of \eqref{eqn:Fond} and $\E_d u =0$ we have for all $i\neq j$
\[
\nabla (Wu)_{ij}= \partial_i \left(\frac{\dive(u)}{n}\right) e_j - \partial_j \left(\frac{\dive(u)}{n}\right) e_i.
\]
In particular, by taking the curl we get
\begin{equation}\label{one}
\partial_{i i} \dive(u)+ \partial_{jj} \dive(u)=0 \ \ \ \text{for all $j\neq i$}
\end{equation}
and
\begin{equation}\label{two}
\partial_{k i} \dive(u)=0 \ \ \ \text{for all $k\neq i$}.
\end{equation}
Since we have $n\geq 3$ we have at least another index $m\neq i\neq j$ for which 
\[
\partial_{mm} \dive(u) + \partial_{ii}\dive(u)=0, \ \ \ \partial_{mm} \dive(u) + \partial_{jj}\dive(u)=0,\ \ \ \partial_{ii} \dive(u) + \partial_{jj}\dive(u)=0.
\]
But then 
\[
\partial_{ii}\dive(u)=-\partial_{mm} \dive(u)=\partial_{jj}\dive(u)
\]
from which it follows $\partial_{ii} \dive(u)=\partial_{jj}\dive(u)=\partial_{mm}\dive(u)=0$. By combining this with \eqref{two} we have
\[
\nabla (\partial_i \dive(u))=0
\]
this being true for every fixed $i=1,\ldots,n$. Then, since $\Omega$ is connected, for some $s \in \R^n$
\[
\partial_i \frac{\dive(u)}{n}= s_i, \ \ \ \nabla (\dive(u))=n s 
\]
which again implies that $\dive(u)= n(s\cdot y + \gamma)$ for $s \in \R^n$, $\gamma\in \R$. This implies
\[
0=\E u - \frac{\dive(u)}{n}\II \ \Rightarrow \ \E u =  (s\cdot y + \gamma)\II.
\]
Observe that, setting $p(y):= (s\cdot y) y - s\frac{|y|^2}{2}  + \gamma y $ we have 
\[
 \E p (y)= (s\cdot y + \gamma) \II 
\]
and thus $\E (u-p)=0$. By now invoking the characterization of the kernel of $\E$ we conclude that, for some $A\in \mathbb{M}^{n\times n}_{skew}$, 
\[
u(y)-p(y)= Ay+b \ \Rightarrow \ u(y)=Ay+p(y)+b
\]
as claimed.  
\end{proof}

\subsection{Annihilator}\label{sbsct:Annihilator}
Given a function $F\in C^{\infty}(\Omega;\mathbb{M}_{sym_0}^{n\times n})$ we seek for an operator $\mathcal{A}$ such that $\mathcal{A} F=0$ whenever $F=\E_d u$ for some potential $u$. In particular the existence of such object for $\mathbb{C}$-elliptic operators is always guaranteed by a result of Van Shaftingen \cite[Remark  4.1, Lemma 4.4]{van2013limiting} (see also \cite[Proposition 17]{arroyo2023elementary} for an extension of the Van Shaftingen's construction). The result in the papers are abstract and not constructive. \\

The annihilator, together with the powerful result in \cite{de2016structure}, will allows us to determine the structure of the singular measure $\frac{\d \E^s_d u}{\d |\E^s_d u|}$. Indeed, setting
\begin{equation}
\Lambda_{\mathcal{A}}:=\bigcup_{|\xi|=1 } \mathrm{Ker}(\A[\xi])
\end{equation}
then (cf. with  \cite[Theorem 1.1]{de2016structure})  
\begin{equation}\label{eqn:waveconeStr}
\frac{\d \E_d u}{\d |\E_d u|}(x)\in \Lambda_{\mathcal{A}} \ \ \ \text{for $|\E_d^s u|$-a.e. $x\in \Omega$}
\end{equation}

Since the annihilitor is not unique (think about $\mathrm{curl}$ and $\nabla (\mathrm{curl})$ both annihiling $\nabla u$) we need to seek for the operator with the lowest possible order so to have the Kernel of its symbol (and thus its wave cone) the smallest possible in order to find the sharpest constraint on the polar vector of the singular part. 

\begin{proposition}[Annihilator]\label{prop:Annihilator}
Define for $F\in C^{\infty}(\Omega;\mathbb{M}_{sym_0}^{n\times n})$ the fourth-order operator $\mathcal{A}:  C^{\infty}(\Omega;\mathbb{M}_{sym_0}^{n\times n})\rightarrow  C^{\infty}(\Omega;\mathbb{M}_{sym_0}^{n\times n})$
\begin{align}
        (\mathcal{A} (F))_{j k}:=&\sum_{i,\ell=1}^n\partial_{ii j\ell} F_{k\ell}+\partial_{ii k\ell} F_{\ell j}-\sum_{i,\ell=1}^n\partial_{ii \ell\ell} F_{jk}\nonumber\\
        &-\frac{n-2}{n-1}\sum_{i,\ell=1}^n\partial_{i\ell j k} F_{i\ell}-\frac{\delta_{jk}}{n-1}\sum_{i,\ell,m =1}^n\partial_{ii\ell m} F_{\ell m}.\label{eqn:OperatorA}
\end{align}
Then, if $u\in C^{\infty}(\R^n;\R^n)$ it holds
\[
\mathcal{A}(\E_d u)=0.
\]
\end{proposition}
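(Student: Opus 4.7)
The plan is to derive $\mathcal{A}$ from the classical Saint--Venant compatibility condition $\mathcal{SV}(\E u) = 0$ by eliminating the divergence $\div u$, which is the only quantity that distinguishes $\E u$ from $\E_d u$.

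First, I would write $\E u = F + \tfrac{\div u}{n}\II$ where $F := \E_d u$, so that applying $\mathcal{SV}$ gives
\[
\mathcal{SV}(F) = -\mathcal{SV}\!\left(\tfrac{\div u}{n}\II\right).
\]
A direct calculation using the formula \eqref{eqn:SVcondition} shows that for any scalar $f$, $\mathcal{SV}(f\II)_{jk} = (2-n)\partial_{jk} f - (\Delta f)\delta_{jk}$, and hence
\[
\mathcal{SV}(F)_{jk} = \tfrac{n-2}{n}\,\partial_{jk}(\div u) + \tfrac{1}{n}(\Delta \div u)\,\delta_{jk}.
\]
This identity encodes all the non-triviality: $\mathcal{SV}$ fails to annihilate $\E_d u$ only through derivatives of $\div u$. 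The next step is to show that these derivatives can be recovered from $F$ itself.

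For this, I would compute $\div\div F$ for $F = \E_d u$. Since $\div \E u = \tfrac{1}{2}\Delta u + \tfrac{1}{2}\nabla\div u$, one gets $\div F = \tfrac{1}{2}\Delta u + \tfrac{n-2}{2n}\nabla\div u$, whence
\[
\div\div F \;=\; \tfrac{n-1}{n}\,\Delta\div u,
\]
so $\Delta\div u = \tfrac{n}{n-1}\,\div\div F$. This is the key observation: the Laplacian of $\div u$, and hence any quantity of the form $\partial_{jk}(\Delta \div u)$, is expressible purely in terms of fourth derivatives of $F$.

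Applying $\Delta$ to both sides of the expression for $\mathcal{SV}(F)$ and substituting the formula for $\Delta\div u$, I would obtain
\[
\Delta\mathcal{SV}(F)_{jk} \;=\; \tfrac{n-2}{n-1}\,\partial_{jk}(\div\div F) + \tfrac{1}{n-1}\,\Delta(\div\div F)\,\delta_{jk},
\]
which I claim coincides, after expanding everything in coordinates and using $\tr F = 0$ (so that the $\partial_{jk} F_{ii}$ term in $\mathcal{SV}$ drops), with the vanishing of the operator $\mathcal{A}(F)$ defined in \eqref{eqn:OperatorA}. The final step is therefore a bookkeeping verification: match $\Delta\mathcal{SV}(F)_{jk}$ with the first line of \eqref{eqn:OperatorA}, and match $\tfrac{n-2}{n-1}\partial_{jk}(\div\div F) = \tfrac{n-2}{n-1}\sum_{i,\ell}\partial_{i\ell jk}F_{i\ell}$ and $\tfrac{1}{n-1}\Delta(\div\div F) = \tfrac{1}{n-1}\sum_{i,\ell,m}\partial_{ii\ell m}F_{\ell m}$ with the two correction terms in the second line.

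The main (mild) obstacle is really bookkeeping: getting the right index relabelings and the right coefficient $\tfrac{n-2}{n-1}$, which is where the dimensional condition $n\geq 3$ enters through the invertibility of the factor $\tfrac{n-1}{n}$ linking $\div\div F$ and $\Delta\div u$. Conceptually, the proof shows that $\mathcal{A} = \Delta\mathcal{SV} - \tfrac{n-2}{n-1}\nabla^2(\div\div\cdot) - \tfrac{1}{n-1}\Delta(\div\div\cdot)\II$, a fourth-order operator obtained by subtracting from the Laplacian of Saint--Venant exactly the obstruction caused by the non-vanishing of $\div u$ for deviatoric fields.
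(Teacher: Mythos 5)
Your proposal is correct, and the route it takes is genuinely different from the paper's. The paper proves $\mathcal{A}(\E_d u)=0$ by directly substituting $F=\E_d u$ into each of the four groups of terms in \eqref{eqn:OperatorA} and summing them via Schwarz's theorem; it is a brute-force verification that the given expression cancels. You instead derive the operator structurally, observing that $\mathcal{A} = \Delta\,\mathcal{SV} - \tfrac{n-2}{n-1}\nabla^2(\div\div\,\cdot) - \tfrac{1}{n-1}\Delta(\div\div\,\cdot)\,\II$ on trace-free fields, and that it vanishes on $\E_d u$ because $\mathcal{SV}(\E u)=0$ and $\div\div \E_d u = \tfrac{n-1}{n}\Delta\div u$ lets you eliminate the unknown $\div u$. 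I checked the computations: $\mathcal{SV}(f\II)_{jk}=(2-n)\partial_{jk}f - (\Delta f)\delta_{jk}$, $\div\div \E_d u = \tfrac{n-1}{n}\Delta\div u$, and the resulting index-matching with \eqref{eqn:OperatorA} all go through (the $\partial_{jk}F_{ii}$ term in $\mathcal{SV}$ drops since $F$ is trace-free, and the $F_{\ell k}=F_{k\ell}$ symmetry closes the loop). Your derivation has the advantage of explaining \emph{why} the operator has order four and where the coefficients $\tfrac{n-2}{n-1}$ and $\tfrac{1}{n-1}$ come from — it makes explicit that recovering $\Delta\div u$ from $\E_d u$ costs one extra Laplacian beyond Saint-Venant, which is the content the paper only discusses at the symbol level in Remark \ref{rmk:OnTheAnnihilitor}. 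One minor imprecision: the factor $\tfrac{n-1}{n}$ (and the denominators $n-1$) only require $n\neq 1$, so this is not where the $n\geq 3$ restriction of the surrounding theory originates; the proposition itself holds for $n\geq 2$ as well, and the $n\geq 3$ constraint elsewhere in the paper comes from $\C$-ellipticity, not from this identity.
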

\begin{proof}
    It is verifiable via a direct computation or by arguing on the symbol as explained in Remark \ref{rmk:OnTheAnnihilitor}. We report, for the sake of completeness, the main step of the computations where we make repeatedly use of Schwarz's Theorem.\\
    \begin{align*}
        \sum_{i,\ell=1}^n \partial_{ii j\ell}  (\E_d u)_{k\ell}+ \partial_{ii k\ell}(\E_d u)_{\ell j}=&\frac{1}{2}\sum_{\substack{i,\ell=1  }}^n \partial_{ii j\ell} \left(\partial_{\ell }u_k +\partial_k u_{\ell}\right)   - \frac{1}{n}\sum_{\substack{i=1}}^n \partial_{ii jk}  \div(u) \\ 
        &+ \frac{1}{2}  \sum_{\substack{i,\ell=1    }}^n \partial_{ii k\ell} \left(\partial_{\ell }u_j +\partial_j u_{\ell}\right) - \frac{1}{n}\sum_{\substack{i=1 }}^n \partial_{ii k j} \div(u)  \\
        =&\frac{1}{2}\sum_{\substack{i,\ell=1  }}^n  \partial_{ii \ell \ell }(\partial_j u_k+\partial_k u_j) +\frac{n-2}{n}\sum_{\substack{i=1  }}^n \partial_{ii j k} \dive(u)\, ,  \\
        -\sum_{i,\ell=1}^n\partial_{ii \ell\ell}  (\E_d u)_{jk}=&-\frac{1}{2}\sum_{i,\ell=1}^n\partial_{ii \ell\ell}  \left( \partial_{j}u_k +\partial_k u_{j}\right)  +\frac{\delta_{jk}}{n} \sum_{i,\ell=1}^n\partial_{ii}\partial_{\ell\ell}  \dive(u)\,  ,\\
        -\frac{n-2}{n-1}\sum_{i,\ell=1}^n\partial_{i\ell j k} (\E_d u)_{i\ell}=& -\frac{n-2}{2(n-1)}\sum_{\substack{i,\ell=1 } }^n\partial_{i\ell j k} (\partial_i u_\ell+\partial_\ell u_i) + \frac{n-2}{n(n-1)}\sum_{\substack{i=1 } }^n\partial_{ii j k} \dive(u)\\
        =& -\frac{n-2 }{ n }\sum_{\substack{i = 1 } }^n  \partial_{i i j k } \dive(u) \, ,\\
        -\frac{\delta_{jk}}{n-1}\sum_{i,\ell,m =1}^n\partial_{ii\ell m} (\E_d u)_{\ell m}=& -\frac{\delta_{jk}}{2(n-1)}\sum_{\substack{i,\ell,m =1}}^n\partial_{ii\ell m} (\partial_\ell u_m +\partial_m u_\ell ) +\frac{\delta_{jk}}{n(n-1)}\sum_{\substack{i,\ell=1}}^n\partial_{ii\ell \ell} \dive(u) \\
        =&-\frac{\delta_{jk}}{n} \sum_{\substack{i,\ell  =1}}^n  \partial_{ii\ell\ell  }   \dive(u)\, .
    \end{align*}
Now by simply adding up the above relations we obtain $\mathcal{A}(\E_d u)=0$.
\end{proof}
\begin{remark}\label{rmk:OnTheAnnihilitor}
    Notice that the symbol of $\mathcal{A}$ is given by $\mathbb{A}[\xi]:\mathbb{M}^{n\times n}_{sym_0}\rightarrow \mathbb{M}^{n\times n}_{sym_0}$
    \begin{equation}\label{eqn:SymbolA}
    \mathbb{A}[\xi]M:=|\xi|^2(M\xi \otimes \xi + \xi \otimes M\xi) -|\xi|^4M -\frac{(\xi^t M \xi)}{n-1}\left[(n-2) \xi \otimes \xi +|\xi|^2\II \right].
    \end{equation}
Notice also that \eqref{eqn:OperatorA} is a very natural choice since, starting from 
\[
\mathbb{E}_d[\xi] u = u\odot \xi - \frac{(u\cdot \xi)}{n}\II
\]
we have
\[
\xi^t (\mathbb{E}_d[\xi] u)\xi= |\xi|^2(u\cdot \xi) \frac{(n-1)}{n}\ \ \ \Rightarrow \ \ \frac{(u\cdot \xi)}{n}=\frac{\xi^t (\mathbb{E}_d[\xi] u)\xi}{|\xi|^2 (n-1)}.
\]
Thus
\begin{align*}
(\mathbb{E}_d[\xi] u)\xi &= \frac{1}{2}\left( (u\cdot \xi)\xi + u |\xi|^2\right) - \frac{\xi^t (\mathbb{E}_d[\xi] u)\xi}{|\xi|^2 (n-1)}\xi \\
&=\frac{(n-2)}{2(n-1)} \frac{  \xi^t (\mathbb{E}_d[\xi] u)\xi}{ |\xi|^2 } \xi + \frac{u |\xi|^2}{2}  
\end{align*}
yielding also 
\[
u=\frac{2}{|\xi|^2}\left[(\mathbb{E}_d[\xi] u)\xi-\frac{(n-2)}{2(n-1)} \frac{  \xi^t (\mathbb{E}_d[\xi] u)\xi}{ |\xi|^2 } \xi\right]
\]
and
\begin{align*}
\mathbb{E}_d[\xi] u = \frac{2}{|\xi|^2}\left[((\mathbb{E}_d[\xi] u)\xi)\odot \xi -\frac{(n-2)}{2(n-1)} \frac{  \xi^t (\mathbb{E}_d[\xi] u)\xi}{ |\xi|^2 } \xi\odot \xi\right]  -  \frac{\xi^t (\mathbb{E}_d[\xi] u)\xi}{|\xi|^2 (n-1)}\II.
\end{align*}
Multiplying by $|\xi|^4$ we get
\begin{align*}
0&= 2|\xi|^2 ((\mathbb{E}_d[\xi] u)\xi)\odot \xi -\frac{(n-2)}{(n-1)} ( \xi^t (\mathbb{E}_d[\xi] u)\xi) \xi\odot \xi   -  |\xi|^2 \frac{\xi^t (\mathbb{E}_d[\xi] u)\xi}{  (n-1)}\II - |\xi|^4 (\mathbb{E}_d[\xi] u) \\
&=\mathbb{A}[\xi](\mathbb{E}_d[\xi]u).
\end{align*}
So somehow the fourth order is the minimum required in order to find a linear function $\mathbb{A}[\xi]$ for which $\mathbb{E}_d[\xi] u\in \mathrm{Ker}(\mathbb{A}[\xi])$.

\end{remark}
\begin{remark}
    Note that $\mathcal{A}(F)$ in \eqref{eqn:OperatorA}, for $F\in C^{\infty}(\Omega;\mathbb{M}_{\mathrm{sym}_0}^{n\times n})$ can be expressed also as
    \[
    \mathcal{A}(F)_{jk} =\Delta\left(\partial_{  j}\dive(F)_k +\partial_{  k } \dive(F)_j\right)-\Delta^2 F_{jk}  -\frac{n-2}{n-1}\partial_{j k}(\dive(\dive(F)) -\frac{\delta_{jk}}{n-1}\Delta (\dive(\dive(F)).
    \]
\end{remark}
\begin{remark}
The computation in Remark \ref{rmk:OnTheAnnihilitor} are consistent with the Saint-Venant condition \eqref{eqn:SVcondition} annhilating $\E u$. Indeed
\[
\mathbb{E}[\xi] u =u\odot \xi, \ \ \ \trace(\mathbb{E}[\xi] u)=(u\cdot \xi)
\]
and
\begin{align*}
   2 (\mathbb{E}[\xi] u) \xi= (u\cdot \xi)\xi +u|\xi|^2, \ \ \ \Rightarrow \ \ \     u= \frac{2}{|\xi|^2} (\mathbb{E}[\xi] u) \xi-\frac{(u\cdot \xi)\xi}{|\xi|^2} 
\end{align*}
yielding
\[
(\mathbb{E}[\xi] u) = \frac{2}{|\xi|^2} ((\mathbb{E}[\xi] u) \xi)\odot \xi-\frac{(u\cdot \xi)}{|\xi|^2} (\xi \odot \xi)
\]
and
\[
0 =  2 ((\mathbb{E}[\xi] u) \xi)\odot \xi- \trace(\mathbb{E}[\xi]u)  (\xi \odot \xi)-\mathbb{E}[\xi] u|\xi|^2=\mathbb{S V}[\xi](\mathbb{E}[\xi] u).
\]
So the second order is the minimum required to find a linear function $\mathbb{SV}[\xi]$ for which $\mathbb{E} [\xi] u\in \mathrm{Ker}(\mathbb{SV}[\xi])$ In the deviatoric operator the control on $(u\cdot \xi)$ requires an additional $|\xi|^2$, differently from the symmetric case.
\end{remark}
We can now compute the the wave cone of $\mathcal{A}$.
\begin{proposition}[Wave cone of $\mathcal{A}$]\label{prop:WaveCone}
    If $M \in \mathbb{M}_{sym_0}^{n\times n}$ and $|\xi|=1$ then
    \[
    \mathbb{A}[\xi]M=0 \ \ \Leftrightarrow \ \ M=v\odot \xi -\frac{(v\cdot\xi)}{n}\II \ \text{for some $v\in \R^n$}.
    \]
In particular
    \[
    \Lambda_{\mathcal{A}}=\left\{\left. v\odot \xi -\frac{(v\cdot\xi )}{n} \II \ \right| \ v\in \R^n, \ \xi\neq 0 \right\}.
    \]
\end{proposition}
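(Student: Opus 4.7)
My plan is to reduce to the case $|\xi|=1$ (harmless by homogeneity of $\mathbb{A}[\xi]$ in $\xi$), in which the defining identity \eqref{eqn:SymbolA} simplifies to
\[
\mathbb{A}[\xi]M = M\xi\otimes\xi + \xi\otimes M\xi - M - \tfrac{\alpha}{n-1}\left[(n-2)\xi\otimes\xi + \II\right],\qquad \alpha:=\xi^{t}M\xi.
\]
The inclusion $\Lambda_\A\supseteq\{v\odot\xi-\tfrac{v\cdot\xi}{n}\II\}$ is then essentially already contained in Remark \ref{rmk:OnTheAnnihilitor}: the chain of identities there shows that $\mathbb{A}[\xi]\mathbb{E}_d[\xi]v=0$ for every $v\in\R^{n}$, so I would just cite this direction and focus on the converse.

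For the nontrivial inclusion, assume $M\in\mathbb{M}^{n\times n}_{\mathrm{sym}_0}$ and $\mathbb{A}[\xi]M=0$. The construction of the potential $v$ is forced by the calculation already present in Remark \ref{rmk:OnTheAnnihilitor}: if $M$ were of the form $v\odot\xi-\tfrac{v\cdot\xi}{n}\II$, then contracting with $\xi$ yields $M\xi=\tfrac{v}{2}+\tfrac{n-2}{2n}(v\cdot\xi)\xi$, and $\alpha=\tfrac{n-1}{n}(v\cdot\xi)$. Solving, I set
\[
v:=2M\xi-\tfrac{n-2}{n-1}\alpha\,\xi.
\]

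The heart of the proof is then a direct algebraic verification that, with this choice, $M=v\odot\xi-\tfrac{v\cdot\xi}{n}\II$. I would first compute $v\cdot\xi=2\alpha-\tfrac{n-2}{n-1}\alpha=\tfrac{n}{n-1}\alpha$, so the trace normalization matches. Next, expanding
\[
v\otimes\xi = 2(M\xi)\otimes\xi - \tfrac{n-2}{n-1}\alpha\,\xi\otimes\xi,\qquad \xi\otimes v = 2\xi\otimes(M\xi) - \tfrac{n-2}{n-1}\alpha\,\xi\otimes\xi,
\]
gives
\[
v\odot\xi - \tfrac{v\cdot\xi}{n}\II = (M\xi\otimes\xi + \xi\otimes M\xi) - \tfrac{(n-2)\alpha}{n-1}\xi\otimes\xi - \tfrac{\alpha}{n-1}\II,
\]
which is exactly $M$ by the hypothesis $\mathbb{A}[\xi]M=0$ read off from the simplified symbol above. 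This closes the equivalence.

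The wave cone description then follows immediately from the definition $\Lambda_{\A}:=\bigcup_{|\xi|=1}\mathrm{Ker}(\mathbb{A}[\xi])$ by taking the union over unit $\xi$. I do not anticipate a genuine obstacle: the only delicate point is to guess the correct formula for $v$, but the ansatz and the appearance of the factor $\tfrac{n-2}{n-1}$ are already hinted at by the computations in Remark \ref{rmk:OnTheAnnihilitor}, so the remaining work is purely a bookkeeping verification, and the $|\xi|=1$ reduction together with symmetry of $M$ keeps the algebra linear.
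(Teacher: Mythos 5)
Your proof is correct, and the computation is airtight: with $|\xi|=1$ and $\alpha=\xi^tM\xi$, the equation $\mathbb{A}[\xi]M=0$ rearranges to $M=M\xi\otimes\xi+\xi\otimes M\xi-\tfrac{(n-2)\alpha}{n-1}\xi\otimes\xi-\tfrac{\alpha}{n-1}\II$, and your choice $v:=2M\xi-\tfrac{n-2}{n-1}\alpha\,\xi$ (with $v\cdot\xi=\tfrac{n}{n-1}\alpha$) makes the right-hand side precisely $v\odot\xi-\tfrac{(v\cdot\xi)}{n}\II$.

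The route is the same as the paper's at heart, but you phrase it intrinsically: the paper first rotates so that $\xi=e_1$, reads off the entries of $M$ forced by $\mathbb{A}[e_1]M=0$ (namely $M_{ij}=0$ for $i\neq j\geq 2$ and $M_{ii}=-M_{11}/(n-1)$ for $i\geq 2$), and then builds $v$ componentwise ($v_j=2M_{1j}$ for $j\geq 2$, $v_1=\tfrac{n}{n-1}M_{11}$). Your coordinate-free derivation produces exactly this $v$ in closed form without any rotation, which is tidier and makes the dependence on $M\xi$ and $\xi^tM\xi$ transparent; the tradeoff is that the paper's coordinate version makes the zero/rank-one entry pattern of $M$ visible at a glance, which is the pattern the paper reuses when computing $\Lambda^{n-1}_{\mathcal{A}}$ in the next section. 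Either argument is complete; the only small omission on your side is that the displayed wave-cone set in the statement ranges over all $\xi\neq 0$, not just $|\xi|=1$, and you should note the one-line reduction by homogeneity, absorbing the scalar $|\xi|$ into $v$.
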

\begin{proof}
If $M=v\odot \xi - \frac{(v\cdot \xi)}{n}\II$ for some $v$ then it is immediate that $\mathbb{A}[\xi]M=0$. So we prove the other implication. Up to a rotation we can assume without loss of generality that $\xi=e_1$. Then $\mathbb{A}[e_1]M=0$ implies, from \eqref{eqn:SymbolA}
\[
M= (Me_1 \otimes e_1 + e_1 \otimes Me_1) -\frac{(e_1^t M e_1 )}{n-1}\left[(n-2) e_1 \otimes e_1 +\II \right].
\]
This gives us 
\[
M_{11}=(e_1^t M e_1 )=:\varrho, \ \ M_{ij}=0 \ \ \text{for $i\neq j$, $i,j>1$}, \ \ M_{ii}=-\frac{\varrho}{(n-1)}, \ i> 1.
\]
Thus, setting $w_j:=M_{1j}=M_{j1}$ for $j>1$ we have that
\begin{equation*}
M = \left(   \begin{array}{ccccc}
         \varrho & w_2 & \ldots &   \ldots &w_n   \\
          w_2 & -\frac{\varrho}{n-1}& \ldots  & \ldots & 0\\
          \vdots & \vdots &-\frac{\varrho}{n-1} &\ldots &0\\
            \vdots  &  \vdots  & \vdots & \ddots & 0\\
          w_n & 0 &\ldots& 0 & -\frac{\varrho}{n-1}
    \end{array}
    \right)
\end{equation*}
By now choosing $v_j:=2w_j$ for $j>1$ and $v_1= \frac{n \varrho}{n-1}$ we get also $\frac{\varrho}{n-1}=\frac{v_1}{n}$ and thus
\begin{equation*}
M = \left(   \begin{array}{ccccc}
         v_1\left(1-\frac{1}{n}\right) & \frac{v_2}{2} & \ldots &   \ldots &\frac{v_n}{2}   \\
          \frac{v_2}{2} & -\frac{v_1}{n}& \ldots  & \ldots & 0\\
          \vdots & \vdots &-\frac{v_1}{n} &\ldots &0\\
            \vdots  &  \vdots  & \vdots & \ddots & 0\\
         \frac{v_n}{2}  & 0 &\ldots& 0 & -\frac{v_1}{n}
    \end{array}
    \right)= v\odot e_1 - \frac{(v\cdot e_1)}{n}\II
\end{equation*}
and the claim follows.
\end{proof}
\begin{corollary}[Polar vector of $\E_d u$]\label{cor:PolarSingualrPart}
Let $u\in \BDD(\Omega)$. Then there exists two Borel vector fields $a,b:\Omega\rightarrow \R^n$ such that, for $|\E_d^s u|$-a.e. $x\in \Omega$
\[
\frac{\d \E_d^s u}{\d |\E_d^s u|}(x)=a(x)\odot b(x) - \frac{(a(x)\cdot b(x))}{n}\II=a(x)\otimes_{\E_d} b(x)
\]
where $\E_d^s u$ is the singular part in the Radon-Nikodým derivative of $\E_du$, with respect to $\L^n$.
\end{corollary}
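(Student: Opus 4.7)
The plan is to combine the three pieces just developed: the explicit annihilator $\mathcal{A}$ from Proposition \ref{prop:Annihilator}, the characterization of its wave cone in Proposition \ref{prop:WaveCone}, and the general structural result of De Philippis--Rindler \cite{de2016structure} that says the polar of the singular part of an $\mathcal{A}$-free measure is constrained to lie in $\Lambda_{\mathcal{A}}$. The corollary is essentially the concatenation of these three facts, so the proof should be short.

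First I would verify that for $u \in \BDD(\Omega)$ the identity $\mathcal{A}(\E_d u) = 0$ holds in the sense of distributions (the precise hypothesis needed to invoke \cite{de2016structure}). Since Proposition \ref{prop:Annihilator} establishes $\mathcal{A}(\E_d v) = 0$ for every $v \in C^{\infty}$, duality and a mollification argument extend this: for $\varphi \in C_c^{\infty}(\Omega; \mathbb{M}_{sym_0}^{n\times n})$ one writes
\[
\langle \mathcal{A}(\E_d u), \varphi\rangle = \int_{\Omega} \mathcal{A}^*\varphi \cdot \d\E_d u,
\]
and tests against $u_{\varepsilon} := u \star \varrho_{\varepsilon} \in C^{\infty}$, for which $\int \mathcal{A}^*\varphi \cdot \E_d u_{\varepsilon}\, \d x = \int \varphi \cdot \mathcal{A}(\E_d u_{\varepsilon})\, \d x = 0$; passing to the limit (using $\E_d u_{\varepsilon} \wt \E_d u$ on the support of $\varphi$) yields the required distributional identity.

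Next I would apply \cite[Theorem 1.1]{de2016structure} to the homogeneous constant-coefficient operator $\mathcal{A}$ of order $4$ and the Radon measure $\mu = \E_d u$: its singular part $\E_d^s u$ must satisfy
\[
\frac{\d \E_d^s u}{\d |\E_d^s u|}(x) \in \Lambda_{\mathcal{A}} \qquad |\E_d^s u|\text{-a.e. } x \in \Omega.
\]
Combining this with the explicit description of $\Lambda_{\mathcal{A}}$ provided by Proposition \ref{prop:WaveCone}, we conclude that at $|\E_d^s u|$-a.e. $x$ there exist vectors $a(x), b(x) \in \R^n$ such that $\frac{\d \E_d^s u}{\d |\E_d^s u|}(x) = a(x) \odot b(x) - \frac{(a(x) \cdot b(x))}{n}\II$, which is the stated identity.

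The only subtle point, and the step I expect to require a bit of care, is the Borel measurability of the vector fields $a, b$. The polar vector $x \mapsto \frac{\d \E_d^s u}{\d |\E_d^s u|}(x)$ is Borel measurable by Radon--Nikod\'ym, and the multifunction $M \mapsto \{(a,b) : M = a \odot b - \tfrac{a\cdot b}{n}\II\}$ has closed nonempty values over $\Lambda_{\mathcal{A}}$; the Kuratowski--Ryll-Nardzewski selection theorem then produces Borel measurable selectors $a(x), b(x)$. The algebraic content is otherwise already packaged in the preceding propositions, so beyond this measurable selection and the distributional extension of Proposition \ref{prop:Annihilator}, no further analysis is needed.
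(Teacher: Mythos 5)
Your proposal is correct and follows the same route as the paper: it is the concatenation of the De Philippis--Rindler structure theorem \cite[Theorem 1.1]{de2016structure} (already recorded as \eqref{eqn:waveconeStr}) with the explicit computation of $\Lambda_{\mathcal A}$ in Proposition \ref{prop:WaveCone}. The extra steps you spell out — verifying $\mathcal A(\E_d u)=0$ distributionally by mollification, and the Kuratowski--Ryll-Nardzewski selection argument for Borel measurability of $a,b$ — are standard technicalities the paper leaves implicit, and your handling of them is sound.
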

\begin{proof}
 Due to \eqref{eqn:waveconeStr} we must have
 \[
 \frac{\d \E_d u }{\d |\E_d u|}(x)\in \Lambda_{\mathcal A} \ \ \ \text{for $|\E_d^s u|$-a.e. $x\in \Omega$}
 \]
 Thanks to Proposition \ref{prop:WaveCone} we conclude.
\end{proof}
The above Corollary gives a precise structure to blow-ups around singular points and motivates Theorem \ref{MainTheoremINTRO}, Section \ref{sct:ProofofRigidity}.

\section{Fine properties from the annihilator}\label{sct:Fine}
\subsection{Structure of the gradient}\label{sbsct:StructureGrad}
For any $u\in L^1(\Omega;\R^n)$ the Lebesgue point Theorem ensures that for $\L^n$-a.e. $x\in$ there exists a precise representative $u(x)$ such that
\[
\lim_{r\rightarrow 0}\fint_{B_{r}(x)}|u(y)-u(x)|\d x=0.
\]
The set of points where this property fails is denoted as $S_u$ and is the \textit{discontinuity set of $u$}. For $x\in  \Omega\setminus S_u$ the value $u(x)$ is also called the \textit{approximate limit} of $u$. \\

We recall that $u\in L^1(\Omega;\R^n)$ is said to be \textit{approximately differentiable at $x\in \Omega\setminus S_u $} if there exists a matrix $L\in \mathbb{M}^{n\times n}$ such that
\[
\lim_{r\rightarrow 0} \fint_{B_r(x)} \frac{|u(x)-u(y) - L (x-y)|}{r}\d y =0.
\]
In this case $M$ is also called the approximate gradient and the notation $\nabla u(x)=M$ is adopted.
Thanks to \cite[Theorem 1.1]{raita2019critical} (see also \cite[Theorem 3.4]{alberti2014p}, that requires a non-local representation of the involved quantity in the spirit of Proposition \ref{prop:NonLocalRepr} in the Appendix) we can deduce that $u\in \BDD(\Omega)$ is approximately differentiable at $\L^n$-a.e. $x\in \Omega$. Moreover the same results ensures that
\[
\frac{\d \E_d u}{\d \L^n}(x)=  e_d (u)(x), \ \ \ e_d (u)(x):=  e (u)(x) - \frac{\trace(e(u)(x))}{n}\Id
\]
where we recall that $e(u)(x)=\frac{\nabla u(x)+\nabla u(x)^t}{2}$.\\

Thus, by Corollary \ref{cor:PolarSingualrPart} we have
\[
\E_d u= e_d (u)(x)\L^n + a(x)\otimes_{\E_d} b(x)|\E_d^s u|
\]
for two measurable vector fields $a,b:\Omega \rightarrow \R^n$.\\

We recall that for $u\in L^1_{loc}$ the set $J_u$ is defined as the set of points $x$ for which there exists a triplets $(u^+(x),u^-(x),\nu_u(x))$ such that $u^+\neq u^-$, $\nu_u \in \mathbb{S}^{n-1}$ and
    \[
  0 =\lim_{r\rightarrow 0+} \fint_{B^\pm_r(x)}  |u(y)-u^{\pm}(x)|\d y
    \]
where    
    \begin{align*}
 B_r^{-}(x):= \{y\in B_r(x) \ | \ y\cdot \nu_u(x)\leq 0\}, \ \ \ \ \ \   B_r^{+}(x):= \{y\in B_r(x) \ | \ y\cdot \nu_u(x)\geq 0\}. 
    \end{align*}
Clearly $J_u\subseteq S_u$. A recent result \cite{del2021rectifiability} shows that $J_u$ is always $n-1$ rectifiable and that $\nu_u$ is the unitary vector field orienting $J_u$ (namley $\nu_u(x)^{\perp}=\mathrm{Tan}(J_u,x)$ for $\H^{n-1}$-a.e. $x\in J_u$, cf. \cite{ambrosio2000functions}).
\subsection{Structure of the jump part}\label{sbsct:StructureJump}
Define
\begin{equation}\label{eqn:ThetaU}
\Theta_u:=\left\{x\in \Omega \  \left| \ \limsup_{r\rightarrow 0} \frac{|\E_d u| (B_r(x))}{r^{n-1}}>0\right.\right\}.
\end{equation}
We make use of the results in \cite{arroyo2019dimensional} and \cite{breit2017traces} to prove that $|\E_d u|\ll\H^{n-1}$ and a reasonable structure result for the gradient on the Jump set jump set. Set
\[
\Lambda_{\mathcal{A}}^{n-1}:=\bigcap_{v\in \R^n}\bigcup_{\xi\in v^{\perp}\setminus\{0\}} \ker(\mathbb{A}(\xi)).
\]
\begin{proposition}\label{prop:finePropGradient}
Let $n\geq 3$, $u\in \BDD(\Omega)$ and $\mathcal{A}$ be the annihilator given by Proposition \ref{prop:Annihilator}. Then
\begin{equation}\label{eqn:RectifiabilityWaveCone}
\Lambda_{\mathcal{A}}^{n-1}=\{0\}.
\end{equation}
As a consequence we have that:
\begin{itemize}
    \item[1)] $|\E_d u|\ll\H^{n-1}$ ;
    \item[2)] $|\E_d u|\left(\left\{x\in \Omega \ \left| \ \limsup_{r\rightarrow 0} \frac{|\E_d u| (B_r(x))}{r^{n-1}}=+\infty \right.\right\}\right)=0$;
    \item[3)] $\H^{n-1}(\Theta_u\Delta J_u)=0$ and
    \[
     \E_d u\restr_{\Theta_u}= \E_d u\restr_{J_u}= [u ]\otimes_{\E_d} \nu_u  \H^{n-1}\restr_{J_u}.
    \]
\end{itemize}
\end{proposition}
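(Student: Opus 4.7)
The plan is to split the proof into three parts, corresponding to items 1)--3), all pivoting on the preliminary identity \eqref{eqn:RectifiabilityWaveCone}.

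The first and most delicate step is \eqref{eqn:RectifiabilityWaveCone}, which I would prove by contradiction. Suppose $M\in\Lambda_{\mathcal{A}}^{n-1}\setminus\{0\}$. By Proposition \ref{prop:WaveCone} every element of $\Lambda_{\mathcal{A}}$ has the form $M=w\odot\xi-\tfrac{w\cdot\xi}{n}\II$, so belonging to $\Lambda_{\mathcal{A}}^{n-1}$ means that for every $v\in\R^n$ one can choose such a representation with $\xi\perp v$. Denoting by $\Xi(M)$ the set of directions $\xi\in\R^n\setminus\{0\}$ achievable across all admissible representations of $M$, the task reduces to showing that $\Xi(M)$ is so small that some hyperplane $v^\perp$ avoids it. I would compute $\Xi(M)$ by splitting into three spectral regimes: (i) the rank-$2$ case $w\cdot\xi=0$ with $w\not\parallel\xi$; (ii) the proportional case $w\parallel\xi$; and (iii) the generic case $w\not\parallel\xi$ with $w\cdot\xi\ne 0$. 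A bisector/eigenvector analysis of $M$ restricted to the $2$-plane $\mathrm{span}(w,\xi)$ (using that the eigenvectors of the rank-$2$ symmetric $w'\odot\xi'$ are the bisectors of $w',\xi'$) shows that in (i) and (iii) $\Xi(M)$ consists of finitely many lines through the origin, while in (ii) $\Xi(M)$ is the union of $\{\pm\xi\}$ with a single cone of half-aperture $\theta$ about $\xi$ determined by $\cos^2\theta=(n-1)/n$. In all three configurations a suitable $v$ (for instance $v=\xi$ in case (ii), or a vector in general position with respect to $\pm w,\pm\xi$ in cases (i) and (iii)) gives $\Xi(M)\cap v^\perp=\emptyset$, the desired contradiction.

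Given \eqref{eqn:RectifiabilityWaveCone}, item 1) follows from the dimensional estimate for $\mathcal{A}$-free measures in \cite{arroyo2019dimensional}: the triviality of $\Lambda_{\mathcal{A}}^{n-1}$ forces $|\mu|\ll\H^{n-1}$ for every $\mathcal{A}$-free vector-valued measure $\mu$, applied to $\mu=\E_d u$, which is $\mathcal{A}$-free by Proposition \ref{prop:Annihilator}. Item 2) is then immediate, since an $\H^{n-1}$-absolutely continuous positive measure cannot charge the set of points of infinite upper $(n-1)$-density. For 3), I would first use the rectifiability part of \cite{arroyo2019dimensional,del2021rectifiability} to deduce that $\Theta_u$ is countably $\H^{n-1}$-rectifiable with a measurable normal field $\nu_u$. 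Next, the $\mathbb{C}$-elliptic trace theory of \cite{breit2017traces}, applicable because $\mathbb{E}_d$ is $\mathbb{C}$-elliptic for $n\ge 3$ (Remark \ref{rmk:EdNotCell}), yields well-defined one-sided traces of $u$ on any such rectifiable set; applying the Gauss--Green formula \eqref{eqn:GGformula} across $\Theta_u$ then identifies $\E_d u\restr_{\Theta_u}=[u]\otimes_{\E_d}\nu_u\,\H^{n-1}\restr_{\Theta_u}$. Finally $J_u\subseteq\Theta_u$ by definition, while on $\Theta_u\setminus J_u$ the two one-sided $\BDD$-traces coincide, so $[u]$ vanishes there and $\E_d u\restr_{\Theta_u\setminus J_u}=0$; combined with the positive upper-density defining property of $\Theta_u$ and $|\E_d u|\ll\H^{n-1}$, this forces $\H^{n-1}(\Theta_u\setminus J_u)=0$.

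The principal obstacle is the case analysis underlying \eqref{eqn:RectifiabilityWaveCone}. The subtle point is that a single $M\in\Lambda_{\mathcal{A}}\setminus\{0\}$ may admit representations belonging to distinct regimes simultaneously; this already occurs for $n=3$, where the rank-$2$ regime (i) and a degenerate instance of (iii) produce matrices of the same rank. One must therefore enumerate \emph{all} admissible representations when computing $\Xi(M)$ and verify that their union is still too small to meet every hyperplane; this is where the $\mathbb{C}$-ellipticity of $\mathbb{E}_d$ (equivalent to $n\ge 3$) enters tacitly, ensuring that $\Xi(M)$ never covers all of $\R^n\setminus\{0\}$. Once \eqref{eqn:RectifiabilityWaveCone} is secured, items 2) and 3) amount to standard applications of the dimensional/rectifiability theory for $\mathcal{A}$-free measures combined with the trace theory for $\mathbb{C}$-elliptic operators.
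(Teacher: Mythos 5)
Your handling of items 1)--3) tracks the paper's: once $\Lambda_{\mathcal{A}}^{n-1}=\{0\}$ is in hand, both you and the authors invoke \cite{arroyo2019dimensional} for the dimensional estimate and $\H^{n-1}$-absolute continuity, \cite{del2021rectifiability} for rectifiability of $\Theta_u$, and the $\C$-elliptic trace theory of \cite{breit2017traces} together with \eqref{eqn:GGformula} to identify the jump part. (One small caveat: the inclusion $J_u\subseteq\Theta_u$ is not ``by definition''---it requires knowing that the jump contributes positive upper $(n-1)$-density, which is part of what is being established.)

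For the core step, your route to $\Lambda_{\mathcal{A}}^{n-1}=\{0\}$ is genuinely different from the paper's. You fix a nonzero $M$ in the wave cone and attempt to classify \emph{all} admissible directions $\xi$ such that $M=w\odot\xi-\frac{w\cdot\xi}{n}\II$, arguing the set $\Xi(M)$ of such directions is a finite union of lines (or a thin cone) and therefore misses some hyperplane $v^\perp$. The paper instead works with a \emph{single} chosen representation, observes that $A$ then has $n-2$ coincident eigenvalues $\lambda_3=\dots=\lambda_n$, and then applies the $\Lambda_{\mathcal{A}}^{n-1}$-hypothesis at just three eigenvector directions $v_1,v_2,v_3$; the orthogonality $\xi_i\perp v_i$ forces $(a_i\cdot v_i)=0$ and hence $\lambda_i=\lambda_3$, so \emph{all} eigenvalues of $A$ coincide, and Lemma~\ref{lem:LinAlgLem} then yields $a=0$. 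The paper's argument is more economical: it never needs the full classification of $\Xi(M)$, only three cleverly chosen constraints. Your approach is more exhaustive but is sound in principle; it would require carefully ruling out ``cross-regime'' representations, which you flag as the delicate point.

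Two factual inaccuracies in your sketch, neither fatal: in the parallel case~(ii) the set $\Xi(M)$ is actually just the single line $\R\xi\setminus\{0\}$, not a cone of half-aperture with $\cos^2\theta=(n-1)/n$---comparing spectra, $M+c\II=w'\odot\xi'$ forces $c=w\cdot\xi/n$ and $w'\parallel\xi'\parallel\xi$ for every $n\ge 3$. Likewise, a degenerate instance of regime~(iii) cannot have rank~$2$: for $w\not\parallel\xi$ and $w\cdot\xi\ne 0$, all $n$ eigenvalues of $M$ are nonzero, since $\mu_{1,2}=\tfrac{w\cdot\xi\pm|w||\xi|}{2}-\tfrac{w\cdot\xi}{n}=0$ would require $|w\cdot\xi|\geq|w||\xi|$. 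Since both errors \emph{over}-estimate $\Xi(M)$, your conclusion that $\Xi(M)\cap v^\perp=\emptyset$ for a suitable $v$ still holds, and the contradiction follows. But you should correct these if you write out the details, because the key assertion that $\Xi(M)$ is small must be an honest \emph{upper bound} obtained by exhausting all admissible representations.
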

Before proceeding to the proof we first provide a simple Lemma from linear algebra that will simplify our argument in computing $ \Lambda_{\mathcal{A}}^{n-1}$.
\begin{lemma}\label{lem:LinAlgLem}
Let $a,\xi \in \R^n\setminus\{0\}$ with $|\xi|=1$.  Then
\begin{itemize}
\item[1)] If $a$ and $\xi$ are not parallel then $a\odot \xi$ has two distinct eigenvalues
\[
\mu_1=\frac{(a\cdot \xi)+|a|}{2}, \ \ \mu_2=\frac{(a\cdot \xi)-|a|}{2}
\]
\item[2)] If $a$ and $\xi$ are  parallel then $a\odot \xi$ has one eigenvalue 
\[
\mu_1=(a\cdot \xi)=|a|
\]
\end{itemize}
\end{lemma}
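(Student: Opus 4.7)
The plan is to exploit the fact that $a\odot\xi$ has rank at most $2$, with image contained in $\mathrm{span}(a,\xi)$, so that the only nontrivial spectral information lives on this two-dimensional subspace.

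First I would verify that $V:=\mathrm{span}(a,\xi)$ is invariant under $a\odot\xi$: for any $v\in\R^n$, a direct expansion of $(a\odot\xi)v=\tfrac12\bigl(a(\xi\cdot v)+\xi(a\cdot v)\bigr)$ shows that the image lies in $V$, and the kernel contains $\{a,\xi\}^{\perp}$. Thus the eigenvalues of $a\odot\xi$ on $V^{\perp}$ are all $0$, and the nonzero spectrum is determined by the restriction $T:=(a\odot\xi)|_V$.

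In Case 1, when $a$ and $\xi$ are not parallel, $V$ is genuinely two-dimensional and I represent $T$ in the (non-orthonormal) basis $\{a,\xi\}$. Applying $T$ to $\alpha a+\beta \xi$ and collecting coefficients yields the matrix
\[
\tfrac12\begin{pmatrix} (a\cdot\xi) & 1 \\ |a|^2 & (a\cdot\xi)\end{pmatrix},
\]
where I have used $|\xi|=1$. Its characteristic polynomial factors as $(\lambda-\tfrac12(a\cdot\xi))^{2}-\tfrac14|a|^2$, giving precisely the two claimed eigenvalues $\mu_{1,2}=\tfrac{(a\cdot\xi)\pm|a|}{2}$. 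Distinctness follows from the strict Cauchy--Schwarz inequality: non-parallelism of $a$ and $\xi$ forces $|a\cdot\xi|<|a|$, so $\mu_1\neq\mu_2$.

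In Case 2, when $a=\lambda\xi$, the symmetric tensor simplifies immediately to $a\odot\xi=\lambda\,\xi\otimes\xi$, a rank-one orthogonal projector (up to scaling) on the line spanned by $\xi$; its only nonzero eigenvalue is $\lambda=(a\cdot\xi)$, with $|\lambda|=|a|$ by $|\xi|=1$. There is no real obstacle anywhere in this argument, since everything reduces to a $2\times 2$ or $1\times 1$ spectral computation; the only mild care needed is in Case 1, to work in the basis $\{a,\xi\}$ rather than trying to orthonormalize it (which is precisely the operation flagged in the earlier remark as incompatible with the deviatoric setting, but here is harmless since we only need eigenvalues, not an orthonormal diagonalization).
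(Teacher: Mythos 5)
Your proof is correct and follows essentially the same route as the paper: restrict to the $2$-dimensional invariant subspace $\mathrm{span}(a,\xi)$ (the orthogonal complement lying in the kernel) and read off the eigenvalues from a $2\times 2$ characteristic polynomial. The only cosmetic difference is that you work in the non-orthonormal basis $\{a,\xi\}$, whereas the paper rotates so that $\xi=e_1$ and $a\in\mathrm{span}(e_1,e_2)$; both give the same polynomial. Your closing parenthetical is slightly off-target, though: the obstruction mentioned in the paper concerns change of variables $\tilde u(x)=Au(A^tx)$ for \emph{non-orthogonal} $A$ acting on the function $u$, whereas rotating coordinates so that $\{a,\xi\}$ becomes orthonormal is a plain rotation and is perfectly compatible with $\E_d$ — indeed the paper's own proof of this lemma does exactly that.
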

\begin{proof}
We treat the two cases separately.\\
\smallskip

\textbf{Case 1).} Without loss of generality we can suppose that $\xi= e_1$ and that $ \langle e_1 ,a \rangle=  \langle e_1,e_2 \rangle $.  Note that
\begin{equation}
a\odot e_1=(a\cdot e_1) e_1\odot e_1 + (a\cdot e_2) e_1\odot e_2 = \left(\begin{array}{cc}
(a\cdot e_1) & \frac{(a\cdot e_2)}{2}\\
\frac{(a\cdot e_2)}{2} & 0
\end{array}\right).
\end{equation}
To find the eigenvalues we need to solve
\[
0=\det( a\odot e_1 - \mu \Id)= -((a\cdot e_1) - \mu)\mu - \frac{(a\cdot e_2)^2}{4}= \mu^2 - \mu (a\cdot e_1)- \frac{(a\cdot e_2)^2}{4},
\]
whose solutions are precisely
\[
\mu_{1,2}= \frac{(a\cdot e_1)\pm \sqrt{(a\cdot e_1)^2+(a\cdot e_2)^2}}{2}=\frac{(a\cdot e_1)\pm |a|}{2},
\]
which are the claimed values.\\
\smallskip

\textbf{Case 2).} Without loss of generality we can suppose that $\xi= |\xi| e_1$,  $a=|a|e_1$.  Then,  the only eigenvector is $e_1$ itself with eigenvalue
\[
\mu_1=|a||\xi|=(a\cdot \xi).
\]
\end{proof}
\begin{proof}[Proof of Proposition \ref{prop:finePropGradient}]
It is enough to prove \eqref{eqn:RectifiabilityWaveCone}. From this relation indeed, by \cite[Corollary 1.4, Theorem 1.5]{arroyo2019dimensional} and a simple application of the theory in \cite{breit2017traces}, we will conclude properties 1)-3).\\
\smallskip

\textbf{Proof of \eqref{eqn:RectifiabilityWaveCone}}. Let $A\in \Lambda_{\mathcal{A}}^{n-1}$. Then, by Definition of $\Lambda_{\mathcal{A}}^{n-1}$ and by Proposition \ref{prop:WaveCone} in particular
\[
A\in \bigcap_{v\in \R^n } \bigcup_{\xi\in v^{\perp}\setminus \{0\}} \left\{\left. a\odot \xi - \frac{(a\cdot \xi)}{n}\Id \  \right| \ a\in \R^n\right\}.
\]
Fix any $v\in \R^n$ and let $\xi\in v^{\perp}\setminus \{0\}$, $a\in \R^n$ be such that
\[
A=a\odot \xi - \frac{(a\cdot \xi)}{n}\Id.
\] 
Without loss of generality, up to redefine $a$, we can assume that $|\xi|=1$. Denote by 
\[
\mathrm{eig}(A)=\left\{\lambda_1 ,\lambda_2 ,\lambda_3 ,\ldots, \lambda_n \right\}
\]
the family of eigenvalues of $A$. Note that the eigenvectors of $A$ are given by the eigenvectors of $M:=a\odot \xi$ and by a base of $\mathrm{Ker}(M)$. All the eigenvectors $v\in \mathrm{Ker}(M) $ has the same eigenvalue $-\frac{(a\cdot \xi)}{n}$. Note that $\mathrm{Ker}(M)$ has dimension either $n-2$ (for $a\not \parallel \xi$) or $n-1$ (for $a\parallel \xi$), (as shown in \cite{de2016structure}). In any case $A$ has at least $n-2$ coincident eigenvalues $\lambda_3=\ldots=\lambda_n=-\frac{(a\cdot \xi)}{n}$. Let $v_k$ be an eigenvectors relative to $\lambda_k$. For $i=1,2,3$ let now $\xi_i\in v_i^{\perp}\setminus \{0\}$ and $a_i\in \R^n$ be such that
\[
A=a_i \odot \xi_i - \frac{(a_i\cdot \xi_i)}{n}\Id.
\]
By fixing $z\in a^{\perp}\cap a_i^{\perp} $ we get
\[
- \frac{(a\cdot \xi)}{n}|z|^2=z^tAz= - \frac{(a_i\cdot \xi_i)}{n}|z|^2 \ \ \Rightarrow \ \  - \frac{(a_i\cdot \xi_i)}{n}=\lambda_3 \ \ \text{for all $i=1,2,3$}.
\]
Now note that (since $\xi_i\cdot v_i=0$)
\[
\lambda_2 v_2= A v_2 = \xi_2 (a_2 \cdot v_2) + \lambda_3 v_2 \ \ \Rightarrow \ \ (a_2 \cdot v_2)=0, \  \lambda_2 = \lambda_3,
\]
and
\[
\lambda_1 v_1= A v_1 = \xi_1 (a_1 \cdot v_1) + \lambda_3 v_1 \ \ \Rightarrow \ \ (a_1 \cdot v_1)=0, \  \lambda_1 = \lambda_3.
\]
In particular $\lambda_1=\lambda_2=\ldots=\lambda_n$. Now, by Lemma \ref{lem:LinAlgLem}, if $a\not \parallel \xi$ we just observe that
\[
\lambda_1=\frac{n-2}{2n}(a\cdot \xi) -\frac{|a|}{2},\  \ \lambda_2=\frac{n-2}{2n}(a\cdot \xi)+\frac{|a|}{2} 
\]
while for $ a\parallel \xi$
\[
\lambda_1=\frac{(n-1)}{n}|a|.
\]
 In the first case, from $\lambda_1=\lambda_2$, we immediately have $|a|=0$. In the second case, from $\lambda_1=\lambda_3$ we have $\frac{(n-1)}{n}|a|=-\frac{|a|}{n}$ which again implies $a=0$. In particular $A=0$ is the only possibility and \eqref{eqn:RectifiabilityWaveCone} holds true.
\smallskip
\end{proof}
In view of Proposition \ref{prop:finePropGradient} we conclude that, for $u\in \BDD(\Omega)$ we have the spltting in three mutually singular measures
\[
\E_d u =  e_d (u) \L^n + [u] \otimes_{\E_d} \nu_u  \H^{n-1}\restr_{J_u}+  a(x)\otimes_{\E_d} b(x)|\E_d^c u|
\]
where $|\E_d^c u|$ is the Cantor part. While for $\BD$ it is known the further important property $|\E u|(S_u\setminus J_u)=0$ (for $\BV$ it holds in the stronger form $\H^{n-1}(S_u\setminus J_u)=0$) this is actually not known in the $\BDD$ context. At the current state it seems technically difficult to be established and the available technology, such as \cite{ambrosio1997fine}, \cite{arroyo2020slicing} does not seem to apply to the deviatoric operator since it does not satisfy a one-dimensional slicing property. We refer the reader to \cite{arroyo2019fine} for a partial result in this sense. 

\section{Proof of Rigidity Theorem \ref{MainTheoremINTRO}}\label{sct:ProofofRigidity}

In this Section we prove the rigidity structure for maps with constant polar vector field, i.e., that satisfies
\[
\E_d u=(a\otimes_{\E_d} b)  \mu, \ \ \mu\in \mathcal{M}(\Omega;\R^+), \ a,b\in \R^n.
\]
In homogenization problem this scenario is the only one that occurs when dealing with Cantor points where the characterization of the blow-up is required. It is also the most challenging from the technical point of view.\\

A very important difference between rigidity in $\BD$ (cf. \cite{de2019fine}) and rigidity in $\BDD(\Omega)$ is that we cannot, in the proof, perform a change of variable that will make $a\perp b$. Indeed while for $\tilde{u}(x)=A u (A^t x)$ it holds
    \[
    e(\tilde{u})(x)=A e (u) (A^tx) A^t
    \]
we cannot express $\E_d\tilde{u}$ as a linear transformation of $\E_d(u)$.  So somehow the operator $\E_d$ does not behave well under change of variables. However, if the matrix $A$ is a rotation then we can infer
    \[
    \E_d (\tilde{u})(x)=A \E_d (u) (A^tx) A^t.
    \]
This property allows us, without loss of generality, to rotate the coordinates in order to have a more explicit relation between $a$ and $b$. In particular without loss of generality we can assume that $a=e_1$, $b=\alpha e_1+\beta e_2$. In this way, by selectively chosing $\a=0$, or $\beta=0$, we can deal with the case of perpendicular vectors, parallel vectors or general position vectors, respectively. \\

We find convenient to introduce the function $f=\frac{\dive(u)-\alpha g}{n}$, quantifying how much $u$ is far from satisfying a $\BD$ rigidity.  

\begin{lemma}\label{lem:tecnico}
Let  $n \geq 3$ and $u\in C^{\infty}(\R^n;\R^n)$ be such that
\[
\E_d u = \left[e_1 \odot(\alpha e_1+\beta e_2)-\frac{\a}{n}\Id\right]g
\]
for some $g\in C^{\infty}(\R^n)$.  Setting $f=\frac{\dive(u)-\alpha g}{n}$,  then the following set of equation hold 
\begin{align}
\beta \partial_{21}g - \a \partial_{22}g-\partial_{22}f-\partial_{11}f=&0 \label{ZEROone}\\
\partial_{jj} g \frac{\beta }{2}+\partial_{21} f=&0\ \ \ &\text{for all $j\geq 3$},\label{ONEone}\\
\partial_{11} f+\partial_{jj}f+\alpha \partial_{jj} g=&0 \ \ \ &\text{for all $j\geq 3$},\label{ONEtwo} \\
\frac{\beta}{2}\partial_{1j}g-\a\partial_{2j}g - \partial_{2j}f=&0\ \ \ &\text{for all $j\geq 3$}, \label{ONEthree}\\
\partial_{22}f +\partial_{jj}f=&0 \ \ \ &\text{for all $j\geq 3$}, \label{TWOone}\\
\partial_{2j} g\frac{\beta}{2}-\partial_{1j}f=&0 \ \ \ &\text{for all $j\geq 3$}, \label{TWOtwo}\\
\partial_{kj} g=&0 \ \ \ &\text{for all $k,j\geq 3$, $k\neq j$},\label{TWOthree}\\
\partial_{kj} f  =&0\ \ \ &\text{for all $k\neq j$ and $j\geq 3$}, \label{THREEone}\\
\partial_{ii}f +\partial_{jj}f=&0 \ \ \ &\text{for all $i,j\geq 3$, $i\neq j$}  \label{THREEtwo}.
\end{align} If $\beta\neq 0$  we further get  
\begin{align}
     \partial_{2j}g=&\partial_{1j}g=0 \ \ \ \ &\text{for all $j\geq 3$},\label{1jg}\\
\partial_{j}g =& -\frac{2 \partial_{12}f }{\beta} x_j +  v_j  \ \ \ \ &\text{for all $j\geq 3$}\label{constancy}
\end{align}
for some $v_j\in \R$ and $\mathrm{Hess}(f)$ is constant. 
\end{lemma}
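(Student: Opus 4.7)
The starting point is the pointwise identity
\[
\partial_k (Wu)_{ij} = \partial_i (\E u)_{jk} - \partial_j (\E u)_{ik},
\]
valid for any $u \in C^{\infty}$; this is essentially identity \eqref{eqn:Fond} already used in the proof of Proposition \ref{prop:ker}. The hypothesis on $\E_d u$, together with the definition $f = (\dive(u) - \alpha g)/n$, is equivalent to
\[
\E u = (e_1\odot(\alpha e_1 + \beta e_2))\, g + f\, \II,
\]
so that the only nonzero entries of $a\odot b := e_1\odot(\alpha e_1 + \beta e_2)$ are $(a\odot b)_{11}=\alpha$ and $(a\odot b)_{12}=(a\odot b)_{21}=\beta/2$. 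Applying Schwarz's theorem in the form $\partial_l\partial_k(Wu)_{ij} = \partial_k\partial_l(Wu)_{ij}$ thus produces a single master identity relating second derivatives of $g$ and $f$, and each of the equations \eqref{ZEROone}--\eqref{THREEtwo} is extracted by specialising the tuple $(i,j,k,l)$. Concretely, $(2,1,1,2)$ yields \eqref{ZEROone}; $(1,p,2,p)$ with $p\geq 3$ yields \eqref{ONEone}; $(1,p,p,1)$ yields \eqref{ONEtwo}; $(p,1,1,2)$ yields \eqref{ONEthree}; $(2,p,p,2)$ yields \eqref{TWOone}; $(p,2,2,1)$ yields \eqref{TWOtwo}; and analogous choices involving three indices $\geq 3$ produce \eqref{TWOthree}, \eqref{THREEone}, \eqref{THREEtwo}. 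This portion is tedious but purely algebraic.

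\textbf{The $\beta\neq 0$ refinements.} From \eqref{THREEone} we read $\partial_{1j}f=\partial_{2j}f=0$ for every $j\geq 3$. Substituting into \eqref{TWOtwo} forces $\partial_{2j}g=0$, and \eqref{ONEthree} then reduces to $(\beta/2)\partial_{1j}g=0$, giving \eqref{1jg}. Combined with \eqref{TWOthree}, this says that the gradient of $\partial_j g$ (for $j\geq 3$) has only the $j$-th component nonzero, so $\partial_j g$ depends on $x_j$ alone. Equation \eqref{ONEone} now reads $\partial_{jj}g = -2\partial_{12}f/\beta$, but $\partial_{12}f$ is independent of every $x_j$ with $j\geq 3$ (again by \eqref{THREEone}), while $\partial_{jj}g$ depends only on $x_j$. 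The two constraints are compatible only if $\partial_{12}f$ is a universal constant $\tau$, and integration yields $\partial_j g = -(2\tau/\beta)x_j + v_j$, which is \eqref{constancy}.

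\textbf{Constancy of $\mathrm{Hess}(f)$.} By \eqref{THREEone}, every off-diagonal second derivative of $f$ outside the $(1,2)$-slot vanishes, and $\partial_{12}f=\tau$ is now known to be constant. For the diagonal, \eqref{TWOone} gives $\partial_{jj}f = -\partial_{22}f$ for $j\geq 3$, and \eqref{ONEtwo} combined with $\partial_{jj}g = -2\tau/\beta$ gives $\partial_{11}f - \partial_{22}f = 2\alpha\tau/\beta$. It thus suffices to show that $\partial_{22}f$ is constant: its partial in $x_1$ equals $\partial_2\partial_{12}f = 0$; its partial in $x_j$ (for $j\geq 3$) equals $\partial_2\partial_{2j}f = 0$; and its partial in $x_2$ equals $\partial_{222}f$, which after differentiating $\partial_{11}f - \partial_{22}f = \mathrm{const}$ in $x_2$ coincides with $\partial_{112}f = \partial_1\tau = 0$. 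The conclusion follows.

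\textbf{Main obstacle.} The identification of the correct index tuples for each equation is tedious but elementary. The one genuinely non-trivial step is the deduction that $\partial_{12}f$ is constant: this requires combining the one-dimensionality of $\partial_j g$ in $x_j$ with the $(x_1,x_2)$-only dependence of $\partial_{12}f$, and observing that the only functions which are simultaneously functions of $x_j$ alone and of $(x_1,x_2)$ alone are constants.
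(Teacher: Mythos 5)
Your proposal is correct and follows essentially the same route as the paper: rewrite the hypothesis as $\E u = (e_1\odot(\alpha e_1+\beta e_2))\,g + f\,\II$, apply Schwarz's theorem to the identity $\partial_k (Wu)_{ij} = \partial_j (\E u)_{ik} - \partial_i (\E u)_{jk}$ (you have the sign flipped, but that is immaterial since every consequence equals zero), and extract each PDE by specialising indices. I spot-checked several of your index tuples, for instance $(i,j,k,l)=(2,1,1,2)$ for \eqref{ZEROone} and $(1,p,p,1)$ for \eqref{ONEtwo}, and they produce the stated equations. For the $\beta\neq 0$ part, your ``disjoint variable dependence'' argument for the constancy of $\partial_{12}f$ — that $\partial_{jj}g$ is simultaneously a function of $x_j$ alone and a function of $(x_1,x_2)$ alone — is a cleaner packaging of what the paper achieves by differentiating $\nabla(\partial_j g)=-\tfrac{2}{\beta}(\partial_{12}f)e_j$ and treating $n=3$ separately; it handles both cases uniformly.

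One small imprecision: you attribute \eqref{TWOthree} to ``choices involving three indices $\geq 3$.'' This cannot be right, since $g$ enters $\E u$ only through the $(1,1),(1,2),(2,1)$ entries, so a second derivative $\partial_{kj}g$ with $k,j\geq 3$ can only emerge when the Schwarz identity is written for $(Wu)_{ij}$ with $i\in\{1,2\}$. In practice one obtains $\tfrac{\beta}{2}\partial_{kj}g=0$ (from the curl of $\nabla(Wu)_{2p}$) and $\alpha\partial_{kj}g+\partial_{kj}f=0$ (from the curl of $\nabla(Wu)_{1p}$); combined with \eqref{THREEone} these give $\partial_{kj}g=0$ as long as $(\alpha,\beta)\neq(0,0)$, which is the only non-degenerate case. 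This is a notational slip, not a gap.
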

\begin{proof}
Observe that
\begin{align*}
    (\E_d u)e_k= & e_1 \alpha\delta_{1,k} + \frac{\beta }{2}[e_1 \delta_{2,k}+e_2\delta_{1,k}] - \frac{\alpha}{n}e_k=e_1 \left[  \alpha\delta_{1,k} + \frac{\beta }{2}\delta_{2,k}\right]+e_2 \frac{\beta }{2}\delta_{1,k}- \frac{\alpha}{n}e_k.
\end{align*}
Then, by recalling that $Wu:=\frac{\nabla u - \nabla u^t}{2}$, combining the above with \eqref{eqn:Fond} we have
\begin{align*}
    \nabla (Wu)_{i,j}=&\left( e_1 \left[ \alpha  \delta_{1,j} + \frac{\beta }{2}\delta_{2,j}\right]+e_2 \frac{\beta }{2}\delta_{1,j}- \frac{\alpha }{n}e_j\right) \partial_{i}g \\
    &- \left( e_1 \left[\alpha \delta_{1,i} + \frac{\beta }{2}\delta_{2,i}\right]+e_2 \frac{\beta }{2}\delta_{1,i}- \frac{\alpha }{n}e_i\right) \partial_{j}g+\partial_i \left(\frac{\dive(u)}{n}\right) e_j - \partial_j \left(\frac{\dive(u)}{n}\right) e_i.
\end{align*}
We can rewrite the above as
\begin{align}
 \nabla (Wu)_{i,j}=&e_1 \left[ \left( \alpha \delta_{1,j} + \frac{\beta }{2}\delta_{2,j}\right)\partial_i g - \left(\alpha  \delta_{1,i} + \frac{\beta }{2}\delta_{2,i}\right)\partial_j g  \right] +e_2 \frac{\beta }{2}\left[\delta_{1,j}\partial_i g- \delta_{1,i}\partial_j g\right]\nonumber \\
 &+e_j\partial_i\left(\frac{\dive(u)- \alpha g}{n}\right) - e_i \partial_j\left(\frac{\dive(u)- \alpha g}{n}\right) \label{final}
\end{align}
this being valid for all $i,j=1,\ldots,n$.  Thus
\begin{align}
\nabla (Wu)_{12}=&e_1 \left( \frac{\beta }{2}  \partial_1 g -  \alpha   \partial_2 g  - \partial_2 f  \right) - e_2 \left(\frac{\beta }{2}  \partial_2 g -\partial_1 f\right),\label{ZERO} \\
 \nabla (Wu)_{1,j}=&  -  e_1\left( \alpha \partial_j g+\partial_j f\right) -e_2 \frac{\beta}{2} \partial_j g+e_j\partial_1f,\ \ &\text{for all $j\geq 3$}, \label{ONE} \\
  \nabla (Wu)_{2,j}=&  -  e_1\frac{\beta}{2}  \partial_j g+e_j\partial_2 f -e_2\partial_jf,\ \ &\text{for all $j\geq 3$},\label{TWO} \\
 \nabla (Wu)_{i,j}=&e_j\partial_if -e_i\partial_jf \ \  &\text{for all $i,j\geq 3$}.\label{THREE} 
\end{align}
By considering the curl of \eqref{ZERO} we get \eqref{ZEROone}. By taking the curl of \eqref{ONE} we get \eqref{ONEone},\eqref{ONEtwo} and \eqref{ONEthree}.  By considering the curl of \eqref{TWO} we get \eqref{TWOone},  \eqref{TWOtwo} and \eqref{TWOthree}.  By considering the curl of \eqref{THREE} we get \eqref{THREEone} and \eqref{THREEtwo}. \\

If thence $\beta\neq 0$,  by \eqref{THREEone} and \eqref{TWOtwo} we obtain
\[
\partial_{2j} g=0 \ \ \text{for all $j\geq 3$}
\]
and this combined with \eqref{ONEthree} (still for $\beta\neq 0$) yields \eqref{1jg}.  Finally \eqref{1jg},  \eqref{TWOthree} and \eqref{ONEone} implies
\[
\nabla(\partial_j g)=-\frac{2}{\beta}(\partial_{12}f) e_j \ \ \ \text{for all $j\geq 3$}.
\]
From this we immediately get $\partial_{k12}f=0$  for all $k$ (it is immediate if $n\geq 4$ while in dimension $n=3$ we obtain it by deriving in $\partial_2$ the relation $\partial_{13}f=0$ given by \eqref{THREEtwo}).  This yields \eqref{constancy}. By deriving in $\partial_1$ (or $\partial_{2}$) \eqref{ONEtwo} for $j=3$ we get $\partial_{111}f=0$ (or $\partial_{211}f=0)$ that yields $\nabla (f_{11})=0$. By doing the same on \eqref{TWOone} we get also $\nabla (f_{ii})=0$ for all $i\geq 2$. As a consequence we also have $\nabla (f_{ij})=0$ for all $i,j$ and thus $\mathrm{Hess}(f)$ must finally be a constant matrix.  
\end{proof}
We now treat two spearate cases, depending on $\beta$ being zero or different from zero. Before proceeding let us recall some well established facts in the next Remark. 

\begin{remark}\label{rmk:CrucialKNOWN}
    
Observe that it is immediate to verify that if $w$ solves
\[
\E w =(a\odot b) g
\]
then it will solves also $\dive(w)=\trace(\E w)=(a\cdot b)g$ and thence
\[
\E_d w =(a\otimes_{\E_d}b)g.
\]
Since (see for instance \cite[Theorem 2.10, Assertion (i)]{de2019fine}) the function 
\begin{equation}\label{eqn:SolToKnownPart}
w=a\psi_1(b\cdot x)+b\psi_2(a\cdot x) +(v\cdot x)[a(b\cdot x)+b(a\cdot x)] - v(a\cdot x)(b\cdot x) 
\end{equation}
- for $a,b$ not parallel - solves
\[
\E w= (a\odot b) (\psi_1'(b\cdot x) + \psi_2'(a\cdot x)+2(v\cdot x))
\]
and thus
\begin{equation}\label{eqn:KnownPart}
\E_d w =(a\otimes_{\E_d}b) (\psi_1'(b\cdot x) + \psi_2'(a\cdot x)+2(v\cdot x)).
\end{equation}
\end{remark}

With this established we can now state and proceed.

\subsection[Rigidity for non-parallel vectors]{Rigidity for non parallel vectors ($\beta \neq 0$)}\label{sbsct:RigidityNonParallel}
In this Section we provide the proof to the following
\begin{theorem}\label{thm:MainRigidityNonParallel}
Let $u\in \BDD(A)$ for a connected open set $A\subset \R^n$. Suppose that
\begin{equation}\label{eqn:RigidPolarMeasure}
\E_d u=(a\otimes_{\E_d}b)\nu
\end{equation}
for some $a, b\in \R^n$, $a\neq \lambda b$ and some positive Radon measure $\nu \in \mathcal{M}(A;\R^+)$. Then there exists two functions $\psi_1,\psi_2\in \BV_{loc}(\R)$ and $v\in \langle a, b\rangle^{\perp} $ such that
\begin{equation}\label{eqn:rigidityMain}
u(x)=\psi_1(x\cdot a)b+\psi_2(x\cdot b)a+Q(x)+L(x)
\end{equation}
for some $L\in \mathrm{Ker}(\E_d)$ and for some third order degree polynom $Q$ solving 
\begin{equation}\label{eqn:PolynomialRemainderMainThm}
\E_d Q=(a\otimes_{\E_d} b) \left((v\cdot x) +\eta(a\cdot x)(b\cdot x)-\vartheta \sum_{j=3}^n  (x\cdot w_j)^2 \right)
\end{equation}
where  $\eta,\vartheta\in \R$, $v\in \langle a,b\rangle^{\perp}$ and $\{w_3,\ldots,w_{n} \}$ is an orthonormal basis of $\langle a,b\rangle^{\perp}$. 
\end{theorem}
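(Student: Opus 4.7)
The plan is to first treat the smooth case via the PDE system of Lemma \ref{lem:tecnico}, extract a wave-type equation in the plane $\langle a,b\rangle$, and then mollify to pass to the general $\BDD$ setting. Since $\E_d$ is covariant under orthogonal changes of variables, I rotate so that $a=e_1$ and $b=\alpha e_1+\beta e_2$ with $\beta\neq 0$. In the smooth case $u\in C^{\infty}$ with $\E_d u=[e_1\odot(\alpha e_1+\beta e_2)-\frac{\alpha}{n}\II]\,g$, Lemma \ref{lem:tecnico} supplies the full PDE system for $g$ and the auxiliary function $f:=\frac{\dive(u)-\alpha g}{n}$, which measures the defect from the $\BD$-rigidity recalled in Remark \ref{rmk:CrucialKNOWN}; in particular $\mathrm{Hess}(f)$ is constant.

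In the characteristic coordinates $s:=x\cdot a$ and $t:=x\cdot b$ the operator $\partial_2(\beta\partial_1-\alpha\partial_2)$ becomes $\beta^2\partial_{st}$, so that \eqref{ZEROone}, combined with the constancy of $\mathrm{Hess}(f)$, reduces to $\partial_{st}g=\mathrm{const}$. D'Alembert's formula then yields $g=\phi_1(s)+\phi_2(t)+c\,st$ on the $(s,t)$-plane. The remaining identities \eqref{1jg}, \eqref{TWOthree} and \eqref{constancy} force $g$ to be affine in each $x_j$, $j\geq 3$, with slope $-\frac{2}{\beta}\tau$ where $\tau:=\partial_{12}f$, plus a diagonal quadratic part in $(x_3,\ldots,x_n)$ with constant coefficients coming from \eqref{ONEone}. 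Integrating $\E u=(a\odot b)g+f\II$ and inserting the explicit $\BD$-primitive \eqref{eqn:SolToKnownPart} yields \eqref{eqn:rigidityMain}, with $Q$ a homogeneous cubic solving \eqref{eqn:PolynomialRemainderMainThm}; the parameters $\eta,\vartheta$ of $Q$ encode the $st$-coefficient of $g$ and the constant $\tau$, respectively.

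For general $u\in\BDD(A)$, I mollify on any relatively compact subdomain: $u_\e:=u\star\varrho_\e$ and $g_\e:=\mu\star\varrho_\e$, where $\mu$ is the scalar measure in \eqref{eqn:RigidPolarMeasure}. Since convolution commutes with $\E_d$, the smooth case applies to $u_\e$ and produces
\[
u_\e(x)=\psi_{1,\e}(x\cdot a)\,b+\psi_{2,\e}(x\cdot b)\,a+Q_\e(x)+L_\e(x),
\]
with $Q_\e$ parametrized by $\tau_\e:=\partial_{12}f_\e$ and by a vector $v_\e$ from \eqref{constancy}. The main obstacle, and the chief technical step, is that $\dive(u_\e)$ is not controlled by $|\E_d u|$ in $\BDD$, so $f_\e$, and a priori $\tau_\e$, $v_\e$, may diverge as $\e\to 0$.

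I would circumvent this by exploiting the explicit shape of $Q$. Testing \eqref{eqn:RigidPolarMeasure} against carefully designed smooth, compactly supported $\mathbb{M}_{\mathrm{sym}_0}^{n\times n}$-valued fields tailored to \eqref{eqn:PolynomialRemainderMainThm} (in practice, those produced by applying $\E_d$ to cubic test vector fields) and invoking the integration-by-parts formula \eqref{eqn:GGformula} allows one to isolate $\tau_\e$, $v_\e$ and the parameters of $L_\e$ as linear functionals of $u_\e$ whose norms are uniformly controlled by $\|u\|_{L^1}+|\E_d u|(A)$. This yields compactness of the polynomial part along a subsequence $\e_k\to 0$. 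Once $Q_{\e_k}$ and $L_{\e_k}$ converge, the remainder $u_{\e_k}-Q_{\e_k}-L_{\e_k}$ has uniformly bounded $\E_d$ and a one-dimensional structure through the $\psi_{i,\e_k}$, so standard $\BV$-type compactness gives $\psi_{i,\e_k}\to\psi_i$ in $L^1_{\loc}$ with $\psi_i\in\BV_{\loc}$. Passing to the limit produces \eqref{eqn:rigidityMain} for $u$; a posteriori this reveals that any $\BDD$ solution to \eqref{eqn:RigidPolarMeasure} with $a\not\parallel b$ is actually in $\BV$.
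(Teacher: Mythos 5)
Your overall architecture (smooth case via Lemma~\ref{lem:tecnico}, then mollification) agrees with the paper's, and your smooth-case derivation is arguably cleaner: reading \eqref{ZEROone} together with the constancy of $\mathrm{Hess}(f)$ directly as $\partial_{st}g=\mathrm{const}$ in the coordinates $s=a\cdot x$, $t=b\cdot x$ is a legitimate shortcut past the paper's explicit wave-equation manipulations \eqref{onde1}--\eqref{onde2}. Two places in the limiting step, however, need attention.

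For the convergence of $\tau_\e$, $v_\e$, $\eta_\e$, your idea of expressing them as linear functionals of $u_\e$ (via integration by parts against compactly supported fields) is sound in principle, and works precisely because $\mathrm{Hess}(f_\e)$ is \emph{constant}: one may therefore write, for any $\rho\in C^\infty_c$ with $\int\rho=1$,
\[
\tau_\e=\partial_{12}f_\e=\int\rho\,\partial_{12}f_\e\,dx
=-\tfrac{1}{n}\int\nabla(\partial_{12}\rho)\cdot u_\e\,dx-\tfrac{\alpha}{n}\int\partial_{12}\rho\,g_\e\,dx,
\]
and both terms converge since $u_\e\to u$ in $L^1_{\loc}$ and $g_\e\,\L^n\rightharpoonup^*\nu$. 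But your proposal never states this mechanism; it simply asserts that ``carefully designed fields tailored to \eqref{eqn:PolynomialRemainderMainThm}'' work. The paper instead observes that for $j\ge 3$ the component $(u_\e-L_\e)\cdot e_j$ is \emph{purely polynomial}, converges in $L^1$, and by the finite-dimensional compactness of polynomial spaces (Lemma~\ref{lem:PolynomPart}) the coefficients converge; Proposition~\ref{prop:PolynomialPartSolutions} supplies the explicit linear relation between these coefficients and $\eta_\e,\vartheta_\e$ that makes this useful. Either route is fine, but the linear dependence of the $Q_\e$-coefficients on $(\eta_\e,\vartheta_\e,v_\e)$ is an essential ingredient that must actually be established.

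The more serious gap is the BV regularity of $\psi_1,\psi_2$. You invoke ``standard BV-type compactness,'' but the available uniform bound is only on $\|\psi'_{1,\e}(t)+\psi'_{2,\e}(s)\|_{L^1}$ (from $|\E_d(u_\e-Q_\e-L_\e)|$), not on each $\|\psi'_{i,\e}\|_{L^1}$ separately; a constant added to one and subtracted from the other cancels in the sum, so separate $\BV$ bounds are not automatic without a normalization that shifts floating affine parts into $Q_\e+L_\e$. The paper sidesteps this entirely: it first shows the limit has the one-dimensional form $w=\psi_1(x\cdot b)a+\psi_2(x\cdot a)b$ with $\psi_i\in L^1_{\loc}$ (Lemma~\ref{lem:unid}), and then proves $\psi_i\in\BV_{\loc}$ directly on the limit by testing $\E_d u$ against the specific fields $\Phi=\varphi(x_1)\eta(x_2)\eta_3(x_3)\cdots\eta_n(x_n)(e_1\odot e_2)$ via the Gauss--Green formula \eqref{eqn:GGformula}. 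This last step is where the genuine work lies, and your proposal as written does not replace it.
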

The proof of \ref{thm:MainRigidityNonParallel} is achieved by arguing first on regular functions and then by a density argument. In order to correctly pass to the limit we need to gather control on $\eta,\theta$. To do this some features on the general integral of the Polynomial equation \eqref{eqn:PolynomialRemainderMainThm} must be found.
\subsubsection{Rigidity for regular functions}\label{sbsbsct:RigidityRegFunctionsNONPAR}

\begin{proposition}\label{propo:RigForRegFunctions}
Let  $A\subset \R^n$ a connected set and $u\in C^{\infty}(A;\R^n)$ be such that
\begin{equation}\label{eqn:RigidPolar}
\E_d u= (a\otimes_{\E_d} b) g
\end{equation}
for some $a, b\in \R^n$, $a\neq \lambda b$, $g\in C^{\infty}(A;\R^+)$. Then there exists two functions $\psi_1,\psi_2\in C^{\infty}(\R)$ and $v\in \langle a, b\rangle^{\perp} $ such that
\begin{equation}\label{eqn:rigidityReg}
u(x)=\psi_1(x\cdot a)b+\psi_2(x\cdot b)a+Q(x)+L(x)
\end{equation}
for some $L\in \mathrm{Ker}(\E_d)$ and for some $Q$ solving 
\begin{equation}\label{eqn:PolynomialRemainder}
\E_d Q=(a\otimes_{\E_d} b) \left((v\cdot x) +\eta(a\cdot x)(b\cdot x)-\vartheta \sum_{j=3}^n  (x\cdot w_j)^2 \right)
\end{equation}
where  $\eta,\vartheta\in \R$, $v\in \langle a,b\rangle^{\perp}$ and $\{w_3,\ldots,w_{n} \}$ is an orthonormal basis of $\langle a,b\rangle^{\perp}$.
\end{proposition}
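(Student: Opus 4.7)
The plan is to follow the strategy sketched in the introduction: exploit Lemma~\ref{lem:tecnico} to obtain a system of PDEs for $g$ and for $f := (\dive(u) - \alpha g)/n$, manipulate it into a D'Alembert-type equation for $g$ restricted to the $\langle a, b \rangle$-plane, then reconstruct $u$ from the structure of $g$. First, since a rotation $x \mapsto A^t x$ with $A \in SO(n)$ transforms $\E_d$ equivariantly (as recalled in the paragraph preceding Lemma~\ref{lem:tecnico}), I may assume without loss of generality that $a = e_1$ and $b = \alpha e_1 + \beta e_2$ with $\beta \neq 0$, and Lemma~\ref{lem:tecnico} applies directly. The key output of that lemma, beyond the listed PDEs, is that $\mathrm{Hess}(f)$ is a \emph{constant} matrix: so $f$ is a degree-$2$ polynomial. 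Combining this with equations \eqref{1jg}, \eqref{TWOthree}, \eqref{constancy} shows that for each $j \geq 3$, $\partial_j g$ is an affine function of $x_j$ alone, hence $g$ decomposes as
\begin{equation*}
g(x) = G(x_1, x_2) + R(x_3, \ldots, x_n),
\end{equation*}
with $R$ an explicit degree-$2$ polynomial involving the constants $v_j$ and $-\partial_{12}f/\beta$.

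The next step is to identify $G$. The only equation of Lemma~\ref{lem:tecnico} that involves pure $(x_1, x_2)$ derivatives is \eqref{ZEROone}, which after using that $\kappa := \partial_{11}f + \partial_{22}f$ is constant reads $\beta \partial_{21} G - \alpha \partial_{22} G = \kappa$. The linear change of variables $\eta := x_1 = a \cdot x$, $\xi := \alpha x_1 + \beta x_2 = b \cdot x$ transforms the left-hand side into $\beta^2 \partial_{\eta \xi} G$, so the PDE collapses to
\begin{equation*}
\partial_{\eta \xi} G = \kappa / \beta^2,
\end{equation*}
which is the D'Alembert-type equation alluded to in the introduction. Integrating twice gives $G = g_1(\eta) + g_2(\xi) + (\kappa/\beta^2)\,\eta \xi$ for some smooth $g_1, g_2 \in C^\infty(\R)$. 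Pulling everything back to the original variables and absorbing the additive constant of $R$ into, say, $g_1$, one arrives at
\begin{equation*}
g(x) = g_1(a \cdot x) + g_2(b \cdot x) + (\tilde{v} \cdot x) + \eta(a \cdot x)(b \cdot x) - \vartheta \sum_{j=3}^n (w_j \cdot x)^2,
\end{equation*}
with $\tilde{v} \in \langle a, b\rangle^\perp$ and constants $\eta, \vartheta$ that are determined by $\kappa$, $\partial_{12}f$, and $\beta$.

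To reconstruct $u$, I use Remark~\ref{rmk:CrucialKNOWN}: choosing $\psi_2, \psi_1 \in C^\infty(\R)$ with $\psi_2' = g_1$, $\psi_1' = g_2$, the map $w(x) := b\,\psi_2(a \cdot x) + a\,\psi_1(b \cdot x)$ satisfies $\E_d w = (a \otimes_{\E_d} b)(g_1(a \cdot x) + g_2(b \cdot x))$. It remains to find a cubic polynomial $Q$ realizing the remaining polynomial part of $g$, i.e.\ solving \eqref{eqn:PolynomialRemainder} for the specific triple $(\tilde v, \eta, \vartheta)$ just identified. Since $\E_d$ maps the finite-dimensional space of cubic polynomials into degree-$2$ polynomials, this reduces to a linear-algebraic matching of coefficients, which can be solved by a direct ansatz of the form $Q(x) = c_1 (a\cdot x)^2(b\cdot x)\,b + c_2 (a\cdot x)(b\cdot x)^2\,a + \sum_j \bigl(\text{terms involving } (w_j\cdot x)^2 \text{ and } (\tilde v \cdot x)\bigr)$, with the coefficients tuned to produce the desired $(a \otimes_{\E_d} b)$-valued right-hand side. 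Once $Q$ is in hand, $u - w - Q$ satisfies $\E_d(u - w - Q) = 0$ and therefore lies in $\mathrm{Ker}(\E_d)$ by Proposition~\ref{prop:ker}, yielding \eqref{eqn:rigidityReg}. The main obstacle in this plan is the explicit construction of $Q$: one has to check that the cubic polynomials actually span (within the appropriate symmetry class) exactly the right-hand sides permitted by \eqref{eqn:PolynomialRemainder}, and in particular that all three types of polynomial source terms in $g$—the linear, the bilinear, and the quadratic-in-orthogonal-directions—are reachable by $\E_d$ applied to a cubic, while no additional obstruction arises. This matching is the computational heart of the regular case and sets up the quantitative bounds needed later for the approximation step handling general $u \in \BDD$.
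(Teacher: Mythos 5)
Your proposal is correct and follows the same broad strategy as the paper (reduce to coordinates $a=e_1$, $b=\alpha e_1+\beta e_2$, invoke Lemma~\ref{lem:tecnico}, identify the structure of $g$, reconstruct $u$ via Remark~\ref{rmk:CrucialKNOWN}), but the central step of identifying $G$ is handled in a genuinely different and cleaner way. The paper differentiates equation \eqref{ZEROone} to derive the wave equations \eqref{onde1}--\eqref{onde2} for $\partial_1 g$ and $\partial_2 g$ separately, applies the one-dimensional D'Alembert formula, reconciles the two resulting profiles via Schwarz's theorem, and --- crucially --- must treat $\alpha=0$ and $\alpha\neq 0$ as \emph{separate cases}, because for $\alpha=0$ the wave equation degenerates. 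Your substitution $\eta=a\cdot x$, $\xi=b\cdot x$ diagonalizes the constant-coefficient operator in \eqref{ZEROone} directly: a one-line verification shows $\beta\partial_{21}G-\alpha\partial_{22}G=\beta^2\partial_{\eta\xi}\tilde G$, so the whole system collapses to the single integrable equation $\partial_{\eta\xi}\tilde G=\kappa/\beta^2$, with no case split. This is shorter and conceptually transparent; it is also precisely the D'Alembert factorization, just applied to the right-hand side $\kappa$ rather than derived via a third-order wave equation. The integration to $\tilde G=g_1(\eta)+g_2(\xi)+(\kappa/\beta^2)\eta\xi$ tacitly assumes the domain in $(\eta,\xi)$ is a product/convex set; the paper's D'Alembert step has exactly the same implicit hypothesis, so this is not a defect of your argument in particular.

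On the last step, you make the argument harder than it needs to be. The proposition does \emph{not} require $Q$ to be a polynomial, only to solve \eqref{eqn:PolynomialRemainder}. Having built $w:=b\psi_2(a\cdot x)+a\psi_1(b\cdot x)$ with $\E_d w=(a\otimes_{\E_d}b)(g_1(a\cdot x)+g_2(b\cdot x))$, the function $Q:=u-w$ is $C^\infty$ and \emph{automatically} satisfies $\E_d Q=(a\otimes_{\E_d}b)\big((\tilde v\cdot x)+\eta(a\cdot x)(b\cdot x)-\vartheta\sum_j(w_j\cdot x)^2\big)$ by subtraction, so one may take $L=0$ and the proof is finished. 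This is exactly what the paper does. Your proposed explicit cubic ansatz for $Q$ is not needed here --- the fact that \emph{any} $C^2$ solution of \eqref{eqn:PolynomialRemainder} is in fact a cubic polynomial, and the computation of its specific Hessian entries, is deferred by the paper to Proposition~\ref{prop:PolynomialPartSolutions}, where it is genuinely needed to run the mollification/limit argument for non-smooth $u$. So your concern about whether the cubic ansatz ``spans'' the admissible right-hand sides is moot at this stage, and would only become relevant (and would there be answered affirmatively) in that later proposition.
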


\begin{proof}[Proof of Proposition \ref{propo:RigForRegFunctions}]
 We place ourselves in the coordinate $a=e_1$, $b=\alpha e_1+\beta e_2$ for $\beta \neq 0$. Recall that, by Lemma \ref{lem:tecnico} we have $\mathrm{Hess}(f)$ is a constant. By \eqref{ZEROone}:
\begin{equation}\label{key}
\zeta = \beta\partial_{1 2 }g  - \alpha \partial_{22}g.
\end{equation}
Also \eqref{1jg}, \eqref{TWOthree} and \eqref{constancy} implies that
 \[
 \partial_{ki}g=0 \ \ \text{for all $k\neq i$, $i\geq 3$},  \ \ \partial_{j}g = -2\tau x_j+v_j \ \ j\geq 3
 \]
for some $\tau= \partial_{12}f\in \R$. We need just to identify $\partial_1g$ and $\partial_2 g$. Let us now consider separately the case $\alpha=0$ and $\alpha \neq 0$.\\

\smallskip
\textbf{The case $\alpha=0$ ($a\perp b$).} From \eqref{key} we get $\partial_{12}g=\eta$.  From this and \eqref{1jg} we derive that 
\[
\partial_{1}g=h_1(x_1)+\eta x_2 , \ \ \ \partial_{2}g=h_2(x_2)+\eta x_1
\]
for some function $h_1,h_2\in C^{\infty}(\R)$. Thus
\begin{equation}
    \nabla g=e_1 (h(x_1)+\eta x_2) + e_2 (h(x_2)+\eta x_1)  + \sum_{j=3}^d (v_j- 2\tau x_j)e_j
\end{equation}
and thus
\[
g= H_1(x_1)+H_2(x_2) + \eta x_1x_2 + \sum_{j=3}^d v_j x_j - \tau x_j^2  
\]
with $H_1,H_2$ such that $H_1'=h_1$, $H_2'=h_2$. In particular  according to Remark \ref{rmk:CrucialKNOWN} we have that, setting $\psi_1'=H_1$, $\psi_2'=H_2$, $u$ as in \eqref{eqn:rigidityReg}, with $Q$ solving 
\[
\E_d Q = (e_1\otimes_{\E_d}e_2)\left( (v\cdot x) +\eta x_1x_2- \sum_{j=3}^n  \tau x_j^2\right)
\]
must solve 
\[
\E_d u=(e_1\otimes_{\E_d}e_2) g.
\]
\smallskip

\textbf{The case $\a\neq 0$.} By \eqref{key} we now show that the following wave equations are in force.
\begin{align}
\beta^2\partial_{11}(\partial_{2 }g)  -  \alpha^2\partial_{22}(\partial_{2 }g)=&0\label{onde1}\\
\beta^2\partial_{11}(\partial_{1 }g)  -  \alpha^2 \partial_{22}(\partial_{1 }g)=&w(x_1)\label{onde2}.
\end{align}
for some $w:\R\rightarrow\R$. Indeed, \eqref{onde1} comes from
\begin{align*}
    \beta^2\partial_{11}(\partial_{2 }g) =\beta \partial_{1}(\beta\partial_{12 }g) =\alpha \beta \partial_{1}(\partial_{22 }g) =\alpha \partial_2(\beta\partial_{12}g)= \alpha^2 \partial_{22}(\partial_{2 }g).
\end{align*}
Equation \eqref{onde2} just come from the fact that (by \eqref{1jg})  
\[
\partial_{j}(\beta^2\partial_{11}(\partial_{1 }g)  - \alpha^2 \partial_{22}(\partial_{1}g))=0 \ \ \ \text{for all $j\geq 3$}
\]
and 
\begin{align}
\partial_{2}\left(\beta^2\partial_{11}(\partial_{1 }g)  - \alpha^2\partial_{22}(\partial_{1 }g)\right)=\partial_{1}\left(\beta^2\partial_{11}(\partial_{2 }g)  - \alpha^2\partial_{22}(\partial_{2}g)\right)=_{\eqref{onde1}}0.
\end{align}
Notice that $\partial_1 g,\partial_2 g$ are functions depending only on $x_1,x_2$ (by \eqref{1jg}). By the well-known D'Alambert formula for the general solutions of the planar wave equation we thus conclude
    \begin{align}
    \partial_{1}g =& f_0(x_1)+f_1^1(\alpha x_1-\beta x_2) + f_1^2 (\alpha x_1+\beta x_2) \label{eqn:BeforeInt1}\\
    \partial_2 g=& f_2^1(\alpha x_1-\beta x_2) + f_2^2 (\alpha x_1+\beta x_2).\label{eqn:BeforeInt2}
    \end{align}
We now integrate $\nabla g$ from \eqref{eqn:BeforeInt1}, \eqref{eqn:BeforeInt2}. We first observe the relation $\partial_{21}g=\partial_{12}g $ implying
\[
 -\beta (f_1^1)'(\alpha x_1-\beta x_2) + \beta (f_1^2)' (\alpha x_1+\beta x_2)=\alpha (f_2^1)'(\alpha x_1-\beta x_2) + \alpha (f_2^2)' (\alpha x_1+\beta x_2)
\]
which yields by computing on $\alpha x_1=\beta x_2$ and $\alpha x_1=-\beta x_2$:
\[
-\beta f_1^1(t)=\alpha f_2^1(t)+c_1 \ \ \ \beta f_1^2(t)= \alpha f_2^2(t) + c_2, \ \ 
\]
Then, accounting for \eqref{constancy} and the above we have
\begin{align*}
    g(x)=&\int_0^1 \nabla g(t x)\cdot x \d t= -\frac{\tau}{\beta} \sum_{j=3}^d x_j^2 + v_j x_j +\gamma_j \\
    &+\int_0^1 [f_0^1(t x_1)x_1+ f_1^1(t(\alpha x_1-\beta x_2))x_1+ f_1^2(t(\alpha x_1+\beta x_2)) x_1]\d t\\
    &+\int_0^1 [f_2^1(t(\alpha x_1-\beta x_2))x_2+ f_2^2(t(\alpha x_1+\beta x_2)) x_2 ] \d t\\
    =&\int_0^1 \nabla g(t x)\cdot x \d t= -\frac{\tau}{\beta} \sum_{j=3}^d x_j^2 + v_j x_j +\gamma_j \\
    &+\int_0^1 \left(f_0^1(t x_1)x_1 - \left(\frac{c_1}{\beta} -\frac{c_2}{\beta} \right)x_1 \right)\d t\\
    &-\frac{1}{\beta}\int_0^1 f_2^1(t(\alpha  x_1-\beta x_2))\left(\alpha x_1-\beta x_2\right)\d t\\
    &+\frac{1}{\beta}\int_0^1  f_2^2(t(\alpha  x_1+\beta x_2)) \left(\alpha x_1 +\beta x_2\right)  \d t  
    \end{align*}
and thus
\[
g(x)=h_1(x_1)+h_2(\alpha x_1+\beta x_2) + h_3(\alpha x_1-\beta x_2) - \sum_{j=3}^d\frac{\tau}{\beta}|x_j|^2+v\cdot x +\gamma
\]
for $v\in \langle e_1,e_2\rangle^{\perp}$.  By exploiting \eqref{key} again we also derive that
\[
-2\alpha \beta^2 h_3''(\alpha x_1-\beta x_2)= \zeta.
\]
Since $\alpha\neq 0$: 
\[
h_3(t)=-\frac{\zeta }{4\alpha \beta^2}t^2+\sigma t+\gamma
\]
By observing that
\begin{align*}
(\alpha x_1-\beta x_2)^2=&(\alpha x_1+\beta x_2)^2 - 4\alpha \beta x_1 x_2\\
\alpha x_1-\beta x_2 =& 2\alpha x_1 - (\alpha x_1+\beta x_2)
\end{align*}
and up to redefine $h_1$ and $h_2$ we can rewrite $g$ as
\begin{align}
    g(x)=&h_1(x_1)+h_2(\alpha x_1+\beta x_2)  +\frac{\zeta }{\beta}x_1x_2 - \frac{\tau}{\beta}\sum_{j=3}^n|x_j|^2+v\cdot x \nonumber\\
   =& h_1(x_1)-\frac{\zeta  \alpha }{\beta^2}x_1^2+h_2(\alpha x_1+\beta x_2)  +\frac{\zeta }{\beta^2}x_1(\alpha x_1+\beta x_2) - \frac{\tau}{\beta}\sum_{j=3}^n|x_j|^2+v\cdot x\nonumber\\
      =& H_1(x_1)+H_2(\alpha x_1+\beta x_2)  +\frac{\zeta }{\beta^2}x_1(\alpha x_1+\beta x_2) - \frac{\tau}{\beta}\sum_{j=3}^n|x_j|^2+v\cdot x
\end{align}
Setting 
\[
\bar{u}:= \psi_1(x\cdot a)b+\psi_2(x\cdot b)a 
\]
with $\psi_1'(t)= H_1(t) $, $\psi_2'(t)= H_2(t) $ we have
\[
\E_d \bar{u}=\left[e_1\odot (\alpha e_1+\beta e_2) - \frac{\alpha}{n}\Id \right]\left(H_1(x_1)+H_2(\alpha x_1+\beta x_2)\right).
\]
Thus $Q:=u-\bar{u}$ satisfies
\begin{equation*}
\E_d Q= \left[e_1\odot (\alpha e_1+\beta e_2) - \frac{\alpha}{n}\Id \right]\left((x\cdot v) +\eta x_1 (\alpha x_1+\beta x_2) - \vartheta\sum_{j=3}^n  x_j^2 \right).
\end{equation*}
and hence $u=\bar{u}+Q+L$ as claimed with $\eta= \frac{\zeta }{\beta^2}$, $\vartheta=\frac{\tau}{\beta} $.  
\end{proof}
\subsubsection{Features of the polynomial solutions}\label{sbsbsct:FeaturesPolSolNONPAR}
In this section we derive some specific features of the polynom $Q$ satisfying \eqref{eqn:PolynomialRemainder}. This is required in order to pass to the limit in our density argument. To do this we observe that, since
\[
w=(v\cdot x) [a(b\cdot x)+b(a\cdot x)]-v(a\cdot x)(b\cdot x),
\]
due to Remark \ref{rmk:CrucialKNOWN} solves
\[
\E_d w=(a\otimes_{\E_d}b) (2v\cdot x),
\]
we are just left to compute the solution to
\begin{equation}\label{polyinter}
\E_d P=(a\otimes_{\E_d}b) \left[  \eta (a\cdot x)(b\cdot x) -  \vartheta  \sum_{j=3}^n (w_j\cdot x)^2\right]
\end{equation}
for $\{w_j\}_{j=3}^n$ orthonormal basis of $\langle a,b\rangle^{\perp}$.  We will not need the explicit solutions but just the features required to pass to the limit from $C^{\infty}$ solutions of \eqref{eqn:RigidPolar} to $\BDD$ solutions of \eqref{eqn:RigidPolarMeasure}.

\begin{proposition}\label{prop:PolynomialPartSolutions}
Set $a=e_1, b=\alpha e_1+\beta e_2$, $\beta \neq 0$ and consider the equation

\begin{equation}\label{eqn:PolynomialRemainderInCoordinate}
\E_d P= e_1 \otimes_{\E_d} (\alpha e_1+\beta e_2) \left[\eta x_1(\alpha x_1+\beta x_2)-\vartheta  \sum_{j=3}^n x_j^2\right].
\end{equation}
Then any particular solution $P\in C^2$ of \eqref{eqn:PolynomialRemainderInCoordinate} is a third order degree polynom and satisfies
    \begin{equation}\label{eqn:DerivativesSpecialSolution}
    \partial_{123} P_3 (x)=\vartheta \beta , \ \ \partial_{223}P_3(x)=-\frac{2\alpha \vartheta  -\eta\beta^2 }{2}.
    \end{equation}

\end{proposition}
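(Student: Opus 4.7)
The plan is to reduce the proposition to extracting two specific second derivatives of the auxiliary function $f := (\dive(P) - \alpha g)/n$ from Lemma~\ref{lem:tecnico}, applied with the specific polynomial source $g(x) := \eta x_1(\alpha x_1 + \beta x_2) - \vartheta \sum_{j=3}^n x_j^2$. Since $g$ is a polynomial of degree $2$, the differential relations of that lemma collapse into purely algebraic constraints on the entries of $\mathrm{Hess}(f)$, which the lemma already guarantees is a constant matrix.

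For the polynomial-degree statement, I would first argue that a $C^2$ solution is automatically smooth, by elliptic regularity applied to the $\mathbb{C}$-elliptic operator $\E_d$ for $n\geq 3$ (Remark~\ref{rmk:EdNotCell}). Then, by uniqueness of solutions modulo $\mathrm{Ker}(\E_d)$, which consists of Killing vector fields of degree at most $2$ by Proposition~\ref{prop:ker}, it suffices to exhibit a polynomial particular solution of degree $3$. This can be done either by a direct cubic ansatz matching the degree of the right-hand side, or by invoking the decomposition from the proof of Proposition~\ref{propo:RigForRegFunctions} specialised to the quadratic $g$ above, which forces the profiles $\psi_1,\psi_2$ in \eqref{eqn:rigidityReg} to be polynomials of degree at most~$3$.

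For the two derivative identities, the crux is that the $(3,3)$-entry of the matrix $M := e_1\odot(\alpha e_1+\beta e_2) - \tfrac{\alpha}{n}\Id$ equals $-\alpha/n$, so the $(3,3)$-component of \eqref{eqn:PolynomialRemainderInCoordinate} reads
\[
\partial_3 P_3 - \tfrac{\dive(P)}{n} \;=\; -\tfrac{\alpha}{n}\, g,
\]
which rewrites cleanly as $\partial_3 P_3 = f$. Consequently $\partial_{123}P_3 = \partial_{12}f$ and $\partial_{223}P_3 = \partial_{22}f$, so everything reduces to reading off these two second derivatives of $f$ from the algebraic system of Lemma~\ref{lem:tecnico}.

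Finally, substituting the explicit Hessian of $g$ into that system yields the values. Equation~\eqref{ONEone} gives $\partial_{12}f = \beta\vartheta$ directly; equation~\eqref{ZEROone} gives $\partial_{11}f + \partial_{22}f = \beta^2\eta$; and equations~\eqref{ONEtwo} and \eqref{TWOone} for any $j\geq 3$ furnish the pair $\partial_{11}f + \partial_{jj}f = 2\alpha\vartheta$ and $\partial_{22}f + \partial_{jj}f = 0$. Subtracting this pair eliminates $\partial_{jj}f$ and yields $\partial_{11}f - \partial_{22}f = 2\alpha\vartheta$, which combined with $\partial_{11}f + \partial_{22}f = \beta^2\eta$ produces $\partial_{22}f = (\beta^2\eta - 2\alpha\vartheta)/2 = -(2\alpha\vartheta - \eta\beta^2)/2$, matching \eqref{eqn:DerivativesSpecialSolution}. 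The only subtle point I anticipate is ensuring that $C^2$ regularity suffices to legitimately apply Lemma~\ref{lem:tecnico} (stated for $C^\infty$), but this is handled by the elliptic-regularity upgrade described in the second paragraph.
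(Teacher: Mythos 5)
Your proof is correct, and it takes a genuinely different and considerably shorter route than the paper's. The key observation you make — and the paper does not — is that the $(3,3)$-component of \eqref{eqn:PolynomialRemainderInCoordinate} reads $\partial_3 P_3 - \tfrac{1}{n}\dive(P) = -\tfrac{\alpha}{n} g$, i.e.\ $\partial_3 P_3 = f$ with $f$ precisely the auxiliary function of Lemma~\ref{lem:tecnico}. This collapses the two target derivatives to $\partial_{12}f$ and $\partial_{22}f$, and plugging the explicit Hessian of the quadratic $g$ into the algebraic identities of the lemma — \eqref{ONEone} for $\partial_{12}f$, and the triple \eqref{ZEROone}, \eqref{ONEtwo}, \eqref{TWOone} for $\partial_{22}f$ — produces the claimed values after two lines of elementary elimination. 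The paper instead writes out each $\partial_k P$ via the explicit formula \eqref{eqn:SolToKnownPart} corrected by a kernel element $L_k$ parameterized by $(A_k,s_k,b_k,\gamma_k)$, and then grinds through the Schwarz compatibility constraints $\partial_{jk}P=\partial_{kj}P$ to pin down the relevant components of $s_1,s_2,s_3$. The trade-off: your method delivers exactly the two coefficients asked for and nothing more, whereas the paper's longer computation simultaneously produces the data for an explicit cubic solution (as the paper notes in the remark that follows), which is then invoked in the proof of Theorem~\ref{thm:MainRigidityNonParallel} when the full expansion of $P_\e\cdot e_3$ in monomials with coefficients polynomial in $(\eta_\e,\vartheta_\e)$ is needed. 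Your $C^2\!\to C^\infty$ upgrade via $\mathbb{C}$-ellipticity (equivalently, hypoellipticity of $\E_d^*\E_d$) is sound and indeed makes the $C^2$ hypothesis legitimate for applying Lemma~\ref{lem:tecnico}; for the degree-$3$ claim, the paper's route — observing $\partial_k P$ solves an equation with linear right-hand side and is therefore a degree-$2$ polynomial modulo $\mathrm{Ker}(\E_d)$ — is a bit more self-contained than the cubic-ansatz alternative you sketch, but both are viable.
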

\begin{remark}\label{rmk:ngeq4}
We underline that, for $n\geq 4 $ a slightly stronger result - not needed in the purpose of computing the limit - hold: for $n\geq 4$ \eqref{eqn:PolynomialRemainderInCoordinate} has a solution if and only if $\eta=\frac{2\alpha\vartheta }{\beta^2}$. Indeed for $n\geq 4$ from \eqref{TWOone}, \eqref{THREEtwo} we get $\partial_{ii}f=0$ for all $i\geq 2$. Suppose now that that we have a solution $P\in C^2$ for $n\geq 4$ and set as in previous computation
\[
g:=\eta x_1(\alpha x_1+\beta x_2)-\vartheta\sum_{j=3}^n x_j^2, \ \ \ f:=\frac{\dive(P)-\alpha g}{n}
\]
Then \eqref{ONEtwo}  gives $\partial_{11}f =  2\alpha \vartheta$. This, plugged in \eqref{ZEROone} and combined with the fact that $\partial_{22}f=0$, gives
\[
2\alpha \vartheta=\beta \partial_{12} g -\alpha \partial_{22}g=\beta^2 \eta  \ \ \ \Rightarrow \ \ \eta=\frac{2\alpha \vartheta}{\beta^2}.
\] 
This is not the case in $n=3$ where a particular solution to \eqref{eqn:PolynomialRemainderInCoordinate} can be provided even for independent $\eta,\vartheta$.
\end{remark}

\begin{proof}
Let $P$ be a particular solution of \eqref{eqn:PolynomialRemainderInCoordinate}. We start by observing that, having set $M:=(a\otimes_{\E_d} b)$
\begin{align}
    \E_d (\partial_1 P )=&  M\eta [\alpha x_1+(\alpha x_1+\beta x_2)]\label{Sold1p}\\
    \E_d (\partial_2 P )=& M \beta \eta x_1\label{Sold2p}\\
    \E_d (\partial_j P )=& - M 2\vartheta    x_j \ \ j\geq 3\label{Soldjp}
\end{align}
All this equation have the structure of \eqref{eqn:KnownPart} and thus the solutions are all given, up to some element of $\mathrm{Ker}(\E_d)$, by Formula \eqref{eqn:SolToKnownPart}. In particular this tells that $\partial_{k}P$ is a second order degree polynom. Once integrated we get that $P$ is a polynom with degree less or equal to $3$. Note that, since the right hand side of \eqref{eqn:PolynomialRemainderInCoordinate} is a second order degree polynom and $\E_d$ is a first order differential operator, $P$ needs to have at least one term of third degree: so $P$ is a third degree polynom. We now focus on the proof of \eqref{eqn:DerivativesSpecialSolution}.\\
\smallskip

Formula \eqref{eqn:SolToKnownPart} gives the exact structure:
\begin{align*}
    \partial_1 P  =&   \eta  \left[\frac{(\alpha x_1+\beta x_2)^2}{2}e_1 + \frac{\alpha x_1^2}{2}(\alpha e_1+\beta e_2)\right]+L_1(x)\\
 \partial_2 P  =&  \frac{\beta \eta}{2}    x_1^2 (\alpha  e_1+\beta e_2) + L_2(x)\\
   \partial_j P  =& - \vartheta  [ e_1(\alpha  x_1+\beta x_2)x_j+(\alpha  e_1+\beta e_2) x_1x_j - e_j x_1(\alpha x_1+\beta x_2)]+ L_j(x)\ \ j\geq 3
\end{align*}
for $L_i\in \mathrm{Ker}(\E_d)$. Since $L_i(x)= A_ix + (s_i \cdot x)x - s_i \frac{|x|^2}{2}+b_i$ for some $s_i,b_i\in \R^n$, $A=R_i+\gamma_i\II$ with $R_i\in \mathbb{M}_{sym_0}^{n\times n}$, $\gamma_i\in \R$ then
\[
\partial_{k} L_i(x)=A_ie_k + (s_i \cdot e_k)x +(s_i \cdot x)e_k - s_i x_k.
\]
Then
\begin{align*}
    \partial_{123} P= \vartheta  \beta e_3  +\partial_{12} L_3(x), \ \ \partial_{123} P_3=\vartheta \beta +\partial_{12} L_3(x)\cdot e_3.
\end{align*}
Since
\[
\partial_{12} L_3(x)=  (s_3 \cdot e_2)e_1 +(s_3 \cdot e_1)e_2  
\]
Then we have immediately
\[
\ \partial_{123} P_3=\vartheta \beta .
\]
Also
\begin{align}\label{almostP3}
    \partial_{223} P_3= \partial_{22} (L_3(x)\cdot e_3)=  - (s_3\cdot e_3).
\end{align}
To compute this value we now take advantage of Schwarz Theorem to derive information on $s_1,s_2,s_3$. In particular by $\partial_{13}P=\partial_{31}P$ we obtain
\begin{align*}
    - \vartheta [ e_1\alpha  x_3+(\alpha e_1+\beta e_2)x_3 - e_3  (2\alpha  x_1+\beta x_2)]+ \partial_1 L_3(x)=\partial_3 L_1(x).
\end{align*}
By computing in $x=0$ we get rid of the affine part $A_3e_1=A_1e_3$ and
\begin{align*}
    - \vartheta  [ e_1\alpha  x_3 +(\alpha e_1+\beta e_2)x_3 &- e_3  (2\alpha  x_1+\beta x_2)]+ (s_3 \cdot e_1)x +(s_3 \cdot x)e_1 - s_3 x_1\\
    &=(s_1 \cdot e_3)x +(s_1 \cdot x)e_3- s_1 x_3.
\end{align*}
and computing in $x=e_1$, $x=e_2$ and $x=e_3$ gives
\begin{align*}
  2\vartheta  \alpha   e_3  + 2(s_3 \cdot e_1)e_1 -s_3   
    &=(s_1 \cdot e_3)e_1 +(s_1 \cdot e_1)e_3 \\
    \vartheta  \beta e_3+ (s_3 \cdot e_1)e_2 +(s_3 \cdot e_2)e_1   
    &=(s_1 \cdot e_3)e_2 +(s_1 \cdot e_2)e_3 \\
      - 2\alpha \vartheta    e_1 -\vartheta \beta e_2 + (s_3 \cdot e_1)e_3 +(s_3 \cdot e_3)e_1  
    &=2(s_1 \cdot e_3)e_3 - s_1.
\end{align*}
Giving
\begin{equation}\label{p13p31ONE}
s_3\cdot e_1=s_1\cdot e_3, \ \ s_3\cdot e_2=0, \ \ (s_1 \cdot e_2)=\vartheta  \beta 
\end{equation}
and
\begin{align}  
s_1&=(2\alpha \vartheta  - (s_3\cdot e_3)  )  e_1 +\vartheta \beta e_2 + (s_1\cdot e_3) e_3\label{p13p31TWO}\\
  s_3&=(s_1\cdot e_3)e_1 + (2\alpha \vartheta      -  (s_1 \cdot e_1))e_3 \label{p13p31THREE}  
\end{align}
From $\partial_{23}P=\partial_{32}P$ we get
\begin{align*}
    -\vartheta [\beta x_3 e_1-\beta e_3 x_1] +\partial_2 L_3(x)=\partial_3 L_2(x).
\end{align*}
Again computing at $x=0$ allows to ignore the part $A_3e_2=A_2e_3$ and obtain
\begin{align*}
    -\vartheta  [\beta x_3 e_1&-\beta e_3 x_1] + (s_3 \cdot e_2)x +(s_3 \cdot x)e_2 - s_3 x_2\\
    &=(s_2 \cdot e_3)x +(s_2 \cdot x)e_3 - s_2 x_3.
\end{align*}
Computing at $x=e_1$, $x=e_2$ and $x=e_3$ yields  
\begin{align*}
   \vartheta  \beta e_3  + (s_3 \cdot e_2)e_1 +(s_3 \cdot e_1)e_2  
    &=(s_2 \cdot e_3)e_1 +(s_2 \cdot e_1)e_3 \\
       (s_3 \cdot e_2)e_2 +(s_3 \cdot e_2)e_2 - s_3  
    &=(s_2 \cdot e_3)e_2 +(s_2 \cdot e_2)e_3  \\
    -\vartheta   \beta  e_1  + (s_3 \cdot e_2)e_3 +(s_3 \cdot e_3)e_2  
    &=(s_2 \cdot e_3)e_3 +(s_2 \cdot e_3)e_3 - s_2  
\end{align*}
that result in
\[
(s_3 \cdot e_1) =0, \ \ (s_3\cdot e_2)=(s_2\cdot e_3)=0 
\]
which combined with \eqref{p13p31ONE}, \eqref{p13p31TWO} and \eqref{p13p31THREE} gives
\begin{align}
s_2&=\vartheta  \beta  e_1 -(s_3 \cdot e_3)e_2 \label{p23p32TWO} \\
      s_3&=-(s_2\cdot e_2)e_3 \nonumber.
\end{align}
Finally by $\partial_{21}P=\partial_{12}P$ we get, still after neglecting the affine part $A_1e_2=A_2e_1$
\begin{align*}
    \eta \beta(\alpha x_1+\beta x_2)e_1 &+(s_1 \cdot e_2)x +(s_1 \cdot x)e_2 - s_1x_2\\
    &=\eta\beta x_1(\alpha e_1+\beta e_2)+(s_2 \cdot e_1)x +(s_2 \cdot x)e_1 - s_2 x_1.
\end{align*}
computing in $x=e_1$ yields finally
\begin{align*}
    \eta \beta \alpha e_1  +(s_1 \cdot e_2)e_1 +(s_1 \cdot e_1 )e_2  
     =&\eta\beta  (\alpha e_1+\beta e_2)+2(s_2 \cdot e_1)e_1  - s_2   
\end{align*}
which means by  \eqref{p13p31ONE} 
\begin{align*}
     s_2 =&  \vartheta \beta e_1  + (\eta\beta^2   - (s_1 \cdot e_1 ))e_2  
\end{align*}
The above combined with \eqref{p23p32TWO} and \eqref{p13p31THREE} gives
\[
(s_2 \cdot e_2)=-(s_3\cdot e_3)\ \Rightarrow \ (\eta\beta^2- (s_1 \cdot e_1 ))=-(s_3\cdot e_3)=-(2\alpha \vartheta -(s_1\cdot e_1))
\]
that gives
\[
  (s_1\cdot e_1)= \frac{2\alpha \vartheta +\eta\beta^2}{2},  \ \ (s_3\cdot e_3)=    \frac{ 2\alpha \vartheta -\eta\beta^2}{2}.
\]
By plugging $(s_3\cdot e_3)$ into \eqref{almostP3} we conclude.
\end{proof}
\begin{remark}
Observe that, with (a discrete amount of) patience the approach proposed in the proof of Proposition \ref{prop:PolynomialPartSolutions} allows one to build a complete particular polynomial solution of the PDE \eqref{eqn:PolynomialRemainderInCoordinate}. Indeed the choice of $s_1,s_2,s_j$ are force by the gradient structure of $\nabla P$ and thus, once identified it is possible to choose $\{A_i\}_{i=}^{n} \subset \mathbb{M}^{n\times n}_{skew}$, $\{b_i\}_{i=1}^{n}\subset \R^d$ so that we can integrate $\nabla P$ to get $P$. By doing so it is possible to observe that there is a choice ($A_i=0$, $b_i=0$) so that a particular solution $P$ is also a \textit{homogeneous} polynom of degree $3$.
\end{remark}

\subsubsection{Approximation argument}\label{sbsbsct:ApproximationNONPAR}
The following is a standard approximation argument.
 \begin{lemma}[Approximation Lemma]\label{lem:Approx}
     Let $u\in \BDD(\R^n)$. Let $\varrho_\e\in C^{\infty}_c(B_{\e}(0))$ be a mollifying kernel. Then it holds $u_\e:=u\star \varrho_\e \in C^{\infty}$ and
  \begin{itemize}
      \item[(1)] $\E_d (u_\e)(x)= (\E_d u\star \varrho_\e)(x)$ ;
      \item[(2)] $u_\e\rightarrow u$ in $L_{\loc}^1(\R^n)$.
  \end{itemize}
 \end{lemma}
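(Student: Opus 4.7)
The plan is to proceed by standard properties of mollification, treating each assertion in turn. The smoothness $u_\e\in C^\infty(\R^n)$ is immediate from $\varrho_\e\in C^\infty_c(B_\e(0))$ and $u\in L^1_{\loc}(\R^n)$: derivatives of $u_\e$ can be moved onto $\varrho_\e$ inside the convolution integral, so $\partial^\alpha u_\e = u\star \partial^\alpha \varrho_\e$ for every multi-index $\alpha$. The convergence (2) is the classical fact that, for $u\in L^1_{\loc}(\R^n;\R^n)$ and a standard mollifier $\varrho_\e$ supported in $B_\e$, one has $u\star \varrho_\e \to u$ in $L^1_{\loc}(\R^n;\R^n)$; no modification is needed in the $\BDD$ setting.

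For (1), the key point is that $\E_d$ is a first-order linear differential operator with constant coefficients, so it commutes with convolution against a smooth kernel, even when applied to the measure $\E_d u$. I would verify this by duality. Fix $\varphi\in C^\infty_c(\R^n;\mathbb{M}^{n\times n}_{\mathrm{sym}_0})$ and let $\tilde\varrho_\e(x):=\varrho_\e(-x)$. Since $u_\e$ is smooth, using the definition of $\E_d$ as the distributional adjoint of $-\E^*_d$ and Fubini's theorem,
\begin{align*}
\int_{\R^n}\varphi\cdot \d\E_d u_\e
&= -\int_{\R^n} u_\e\cdot \E^*_d\varphi\,\d x
= -\int_{\R^n}(u\star\varrho_\e)\cdot \E^*_d\varphi\,\d x\\
&= -\int_{\R^n} u\cdot \bigl((\E^*_d\varphi)\star \tilde\varrho_\e\bigr)\d x
= -\int_{\R^n} u\cdot \E^*_d(\varphi\star \tilde\varrho_\e)\,\d x\\
&= \int_{\R^n}(\varphi\star\tilde\varrho_\e)\cdot \d\E_d u
= \int_{\R^n}\varphi\cdot \d(\E_d u\star \varrho_\e),
\end{align*}
where in the third equality I used that differentiation commutes with convolution against the smooth function $\varphi$, and in the last step the standard Fubini-type identity $\int(\psi\star\tilde\varrho_\e)\d\mu = \int \psi\,\d(\mu\star\varrho_\e)$ valid for any finite Radon measure $\mu$ and $\psi\in C^\infty_c$. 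Since $\varphi$ was arbitrary and both sides are continuous functions (the right-hand side is the convolution of a measure with a smooth kernel, hence smooth), we conclude $\E_d u_\e = \E_d u\star\varrho_\e$ pointwise.

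There is no real obstacle here: the proof is a routine application of the commutation of convolution with linear constant-coefficient differential operators, combined with the definition of $\E_d u$ for $u\in \BDD(\R^n)$ and the standard mollification theorem in $L^1_{\loc}$. The only minor care needed is to ensure that all integration-by-parts identities are applied against $C^\infty_c$ test fields, which is legitimate since both $u_\e$ and $\varphi\star\tilde\varrho_\e$ are smooth with compact support (the latter provided we work locally, which suffices since $\varphi$ has compact support).
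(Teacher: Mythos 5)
Your proof is correct, and it is exactly the standard duality/Fubini argument one would give. The paper itself supplies no proof of this lemma (it is introduced with the sentence "The following is a standard approximation argument"), so there is nothing to compare against; the commutation of the constant-coefficient operator $\E_d$ with convolution, verified against $C^\infty_c$ test fields using the Fubini identity for measure convolutions, is precisely the omitted routine verification.
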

\begin{lemma}\label{lem:unid}
    Let $s_\e(x)=\psi_\e(x\cdot \ell)$ for some $\ell \in \R^n$ and for some sequence of measurable functions $\psi_\e\in \BV_{\loc}(\R)$. Suppose that $s_\e\rightarrow s$ in $L^1_{\loc}(\R^n)$. Then $s(x)=\psi(x\cdot \ell)$ for some measurable function $\psi\in L^1_{\loc}(\R)$.
\end{lemma}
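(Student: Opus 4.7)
The plan is to exploit the fact that each $s_\e$ is manifestly invariant under translations in the hyperplane $\ell^\perp$, show that this invariance is preserved under $L^1_{\loc}$ convergence, and finally conclude via a Fubini argument that a translation-invariant function in $\ell^\perp$ must depend only on $x\cdot \ell$.

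First, I would dispose of the trivial case $\ell=0$ (then each $s_\e$ is a constant, so $s$ is a constant, and we can take $\psi$ to be that constant). Assuming $\ell\neq 0$, after normalization we may take $|\ell|=1$ and complete $\ell$ to an orthonormal basis $\{\ell,w_2,\ldots,w_n\}$ of $\R^n$. By construction, for every $v\in \ell^\perp$ we have $s_\e(x+v) = \psi_\e((x+v)\cdot \ell) = \psi_\e(x\cdot \ell) = s_\e(x)$ for a.e. $x$.

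Next I would pass this invariance to the limit. For each $\varphi\in C_c^\infty(\R^n)$ and each $v\in\ell^\perp$, the change of variable $y=x+v$ together with the translation invariance of $s_\e$ yields
\[
\int_{\R^n} s_\e(x)\,\varphi(x-v)\,\d x = \int_{\R^n} s_\e(x+v)\,\varphi(x)\,\d x = \int_{\R^n} s_\e(x)\,\varphi(x)\,\d x.
\]
Since $\varphi$ and $\varphi(\cdot-v)$ are compactly supported and $s_\e\to s$ in $L^1_{\loc}$, both sides converge and we obtain
\[
\int_{\R^n} s(x)\,\varphi(x-v)\,\d x = \int_{\R^n} s(x)\,\varphi(x)\,\d x \qquad \text{for all } \varphi\in C_c^\infty(\R^n),\ v\in\ell^\perp.
\]
By the fundamental lemma of calculus of variations this gives $s(x+v)=s(x)$ for $\L^n$-a.e. $x$, for every fixed $v\in\ell^\perp$.

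Finally I would run a Fubini argument in the coordinates $(t,y')\in\R\times\R^{n-1}$ defined by $t=x\cdot\ell$ and $y'=(x\cdot w_2,\ldots,x\cdot w_n)$. Writing $\tilde s(t,y'):= s(t\ell+\sum_{j=2}^n y'_j w_j)$, the invariance just established reads $\tilde s(t,y'+z)=\tilde s(t,y')$ for $\L^n$-a.e. $(t,y')$ and every $z\in\R^{n-1}$. By Fubini, for $\L^1$-a.e. $t\in\R$ the slice $y'\mapsto \tilde s(t,y')$ is in $L^1_{\loc}(\R^{n-1})$ and is invariant under all translations by a countable dense set $D\subset\R^{n-1}$; by continuity of translation in $L^1_{\loc}$, it is invariant under every translation, hence equal a.e. to a constant $\psi(t)$ (for instance, this constant can be recovered as the a.e. limit of the mollifications $\tilde s(t,\cdot)\star\varrho_\delta$, which are smooth translation-invariant functions on $\R^{n-1}$ and therefore constant). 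Defining $\psi(t)$ on the exceptional null set arbitrarily (e.g.\ $\psi(t)=0$), we obtain a measurable $\psi:\R\to\R$ with $s(x)=\psi(x\cdot\ell)$ $\L^n$-a.e., and $\psi\in L^1_{\loc}(\R)$ by Fubini again. The only mildly delicate point is the reduction from ``invariance under every fixed translation, up to a $v$-dependent null set'' to ``invariance on each slice for a.e.\ $t$,'' but this is handled by restricting first to a countable dense set of translations $v$ and exchanging the corresponding countable union of null sets with Fubini, which is the step I view as the main (though still routine) obstacle.
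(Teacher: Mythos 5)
Your proof is correct and takes a genuinely different route from the paper. The paper normalizes $\ell=e_1$, uses the $\BV_{\mathrm{loc}}$ hypothesis on $\psi_\e$ to write the product-structure distributional gradient $D s_\e = e_1\, D\psi_\e(\d x_1)\otimes \L^{n-1}(\d x_2\cdots\d x_n)$, tests against $\partial_k\varphi$ for $k\neq 1$ to obtain $\partial_k s_\e=0$ distributionally, passes to the limit to get $\partial_k s=0$ distributionally, and then invokes the standard fact that a locally integrable function with vanishing distributional derivative in the directions $e_2,\dots,e_n$ depends only on $x_1$. You instead bypass the $\BV$ structure entirely: you observe that each $s_\e$ is invariant under translations in $\ell^\perp$, transfer this invariance to $s$ by testing against $\varphi(\cdot-v)$ and using $L^1_{\mathrm{loc}}$ convergence, and then run the Fubini/countable-dense-set/mollification argument from scratch. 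The two buy slightly different things: the paper's version is shorter because it offloads the last step to a classical lemma, but it quietly invokes the $\BV$ hypothesis to justify the integration by parts; your version is more self-contained and in fact more general, since it never uses $\psi_\e\in\BV_{\mathrm{loc}}$ — only measurability of $\psi_\e$ and $s_\e\in L^1_{\mathrm{loc}}$ — so it proves a slightly stronger statement. You also handle the genuinely delicate point (the $v$-dependent null sets when passing from ``invariant under each fixed translation'' to ``constant on a.e.\ slice'') carefully and correctly via the countable dense set and $L^1_{\mathrm{loc}}$-continuity of translation.
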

\begin{proof}
Suppose without loss of generality that $\ell=e_1$. Since $\psi_\e\in \BV_{\loc}(\R)$ then $s_\e\in \BV_{\loc}(\R^n)$ with $Ds_\e=e_1 D\psi_\e(\d x_1)\otimes \L^{n-1}(\d x_2\ldots\d x_n)$. Here we recall the notation 
\[
\mu(\d x_1)\otimes \nu(\d x_2\ldots\d x_n)(\varphi):=\int_{\R^n} \varphi \d \mu(x_1)\d\nu(x_2)\ldots \d\nu(x_n).
\]
For $\varphi\in C^{\infty}_c(\R^n)$ we also have for $k\neq 1$:
\[
\int_{\R^n} s \partial_k \varphi\d x=\lim_\e \int_{\R^n} s_\e \partial_k \varphi\d x=- \lim_\e \int_{\R^n} (e_1\cdot e_k) \varphi\d D\psi_\e (\d x_1)\d x_2\ldots\d x_d=0.
\]
Thence, distributionally $\partial_k s=0$ for all $k\neq 1$. This implies that $s(x)=\psi(x\cdot e_1)$ for a measurable function $\psi:\R\rightarrow \R$.
\end{proof}
Finally, to handle the polynomial part we need to employ the following Lemma
\begin{lemma}\label{lem:PolynomPart}
    Let $Q_\e:\R^n\rightarrow \R$ be a polynoms in $x_1,\ldots,x_n$ of degree $k$. Suppose that $Q_\e\rightarrow Q$ in $L_{loc}^1 $ for some $Q\in L^1_{loc}$. Then $Q$ is a polynom of degree at most $k$  and the coefficients of $Q_\e$ converges to the coefficient of $Q$.
\end{lemma}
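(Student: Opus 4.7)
The plan is to reduce everything to the fact that the space $V_k$ of real polynomials on $\R^n$ of degree at most $k$ is finite-dimensional, so that all norms on it are equivalent. I would fix any bounded open set $U\subset \R^n$ with nonempty interior (for instance $U=B_1$) and consider the linear restriction map $T:V_k\to L^1(U)$, $P\mapsto P|_U$.

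The crucial step is to observe that $T$ is injective: a polynomial vanishing $\L^n$-a.e.\ on $U$ must vanish on a nonempty open set, and therefore it is identically zero. Since $V_k$ is finite-dimensional, $T(V_k)$ is a finite-dimensional, hence closed, subspace of $L^1(U)$, and $T:V_k\to T(V_k)$ is a linear isomorphism between finite-dimensional normed spaces. By equivalence of norms, there exists a constant $c_{U,k}>0$ such that, for every $P=\sum_{|\alpha|\leq k} c^\alpha x^\alpha \in V_k$,
\[
\max_{|\alpha|\leq k} |c^\alpha| \ \leq\ c_{U,k}\,\|P\|_{L^1(U)}.
\]

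Applied to $Q_\e - Q_{\e'}$, which lies in $V_k$, this shows that the coefficient vectors $(c_\e^\alpha)_{|\alpha|\leq k}$ form a Cauchy family in $\R^{\dim V_k}$ as $\e\to 0$, since $\{Q_\e\}$ is $L^1(U)$-Cauchy. Let $\bar c^\alpha:=\lim_\e c_\e^\alpha$ and $\bar Q(x):=\sum_{|\alpha|\leq k}\bar c^\alpha x^\alpha$. Coefficient convergence for polynomials of bounded degree implies uniform convergence on compact sets, hence $Q_\e\to\bar Q$ in $L^1_{loc}(\R^n)$. Uniqueness of the $L^1_{loc}$-limit then forces $Q=\bar Q$ almost everywhere, so $Q$ is a polynomial of degree at most $k$ whose coefficients are precisely $\lim_\e c_\e^\alpha$.

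There is no serious obstacle here: the whole argument is finite-dimensional linear algebra dressed up in $L^1$ clothing. The only point requiring any care is the injectivity of the restriction $T$, which rests on nothing more than the elementary fact that a polynomial vanishing on a set with nonempty interior must be identically zero.
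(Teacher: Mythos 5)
Your proof is correct and uses essentially the same underlying mechanism as the paper: the space of polynomials of degree at most $k$ restricted to a bounded open set is finite-dimensional, and equivalence of norms in finite dimensions lets you transfer $L^1$ control to control on the coefficients. The paper phrases this by noting that the space $S$ of such restricted polynomials is a closed subspace of $L^1(A)$ (so the $L^1$ limit of the $Q_\e$ is itself a polynomial) and then invokes continuity of the coefficient-extraction map $T: S\to\R^N$; you instead go through the Cauchy property of the coefficient vectors, identify the candidate limit $\bar{Q}$, and match it to $Q$ by uniqueness of $L^1_{\loc}$ limits. Your version is a touch more explicit about the one point worth flagging -- injectivity of the restriction map, which is what makes the coefficient map on $S$ well defined in the paper's formulation -- but the two arguments are the same in substance.
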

\begin{proof}
Fix an open bounded set $A\subset \R^n$ and note that the space 
    \[
    S:=\{Q: A\rightarrow \R \ | \ \text{$Q$ is a polynom of degree at most $k$}\}
    \]
is a closed finite dimensional vector space of finite dimension. Let $\underline{i}\in\{0,1,\ldots,k\}^n$ be a multindex and, for $Q\in S$, $c^Q_{\underline{i}}$ be denoting the coefficient of $Q$ in front of a monom $x^{\underline{i}}:=x_1^{i_1}\ldots x_n^{i_n}$, $|\underline{i}|=i_1+\ldots+i_n \leq k$. Then the application
\[
T:S\rightarrow \R^N, \ \ T(Q):=(c^Q_{\underline{i}})_{\underline{i}\in\{0,1,\ldots,k\}^n}
\]
is a linear application between finite dimensional vector spaces and hence is continuous. Thus, since $Q_\e$ is converging in $L_{loc}^1$, we have $Q_\e\rightarrow Q$ in $L^1(A)$ for some $Q\in S$. Thence
\[
|T(Q_\e)-T(Q)| \leq C_T\|Q_\e - Q\|_{L^1}\rightarrow 0.
\]
By the very definition of $T$ we have that the coefficient of $Q_\e$ converges to those of $Q$.
\end{proof}
We are now in the position for proving Theorem \ref{thm:MainRigidityNonParallel}.
 \begin{proof}[Proof of Theorem \ref{thm:MainRigidityNonParallel}]
Let $u\in \BDD(\R^n)$ satisfying 
\[
\E_d u=(a\otimes_{\E_d} b)\nu
\]
Since $a,b$ are not parallel, without loss of generality, we can assume that $a=e_1, b=\alpha e_1+\beta e_2$ for $\beta  \neq 0$ and $\alpha\in \R$. Consider $u_\e$ the approximation as in Lemma \ref{lem:Approx} and note that
\[
\E_d u_\e =  \E_d u \star \varrho_\e= (a\otimes_{\E_d} b) (\nu \star \varrho_\e)(x).
\]
We invoke Proposition \ref{propo:RigForRegFunctions} with $g_\e= (\nu \star \varrho_\e)\in C^{\infty}$ and we conclude that 
\[
u_\e(x)=a\psi_1^{\e}(x\cdot b)+b \psi_2^\e(x\cdot a) + Q_\e(x)+ L_\e(x).
\]
with $Q_\e$ solving \eqref{eqn:PolynomialRemainder}. We just need to show that the claimed structure is stable under the limit as $\e\to 0$, yielding the thesis also on $u$. Notice that $u_\e \rightarrow u$ and since 
\[
\sup_{\e}\{|\E_d(u_\e-L_\e)|(A)\}<+\infty
\]
we have $u_\e-L_\e\rightarrow \bar{u}$. Since 
\[
\E_d u =\wlim_{\e\rightarrow 0}\E_d u_\e=\wlim_{\e\rightarrow 0 }\E_d( u_\e-L_\e)=\E_d \bar{u},
\]
where $\wlim $ denote the \textit{weak star limit}, we have also that $L:=u-\bar{u}\in \mathrm{Ker}(\E_d)$ and that 
\[
L_\e=u_\e-(u_\e-L_\e)\rightarrow u-\bar{u}=L.
\] 
We now complete the proof by separately analyzing the polynomial part and the one dimensional part.\\
\smallskip

\textbf{Step one:} \textit{limit of the polynomial part}. Suppose that $a=e_1$, $ b=\alpha e_1 +\beta e_2$.  We know that (up to an element of $\mathrm{Ker}(\E_d)$) it must hold (cf. Remark \ref{rmk:CrucialKNOWN})
\[
Q_\e(x)=(v_\e \cdot x)[e_1 (\alpha x_1+ \beta x_2)+(\alpha e_1 + \beta e_2)x_1 ] - v_\e x_1(\alpha x_1+ \beta x_2)+P_\e(x) 
\]
for some $v_\e\in \langle e_1,e_2\rangle^\perp$ and for some $P_\e$ solving \eqref{eqn:PolynomialRemainderInCoordinate}
\[
\E_d P_\e =e_1\otimes_{\E_d}(\a e_1 + \b e_2)  \left[\eta_\e x_1(\alpha x_1+\beta x_2) -\vartheta _\e \sum_{j=3}^n x_j^2\right].
\]
Since $(u_\e-L_\e)\rightarrow \bar{u}$ in $L^1$ for any $j\geq 3$ we have
\[
Q_\e(x)\cdot e_j=(u_\e-L_\e)\cdot e_j\rightarrow (\bar{u}\cdot e_j) \ \ \text{in $L^1$}.
\]
But
\[
Q_\e(x)\cdot e_j=-(v_\e\cdot e_j)x_1(\alpha x_1+\beta x_2)+P_\e \cdot e_j
\]
We use Proposition \ref{prop:PolynomialPartSolutions} to infer that a solution to \eqref{eqn:PolynomialRemainderInCoordinate} must be of the form
\[
(P_\e(x)\cdot e_3):=\vartheta _\e \beta  x_1x_2x_3  -(2\alpha \vartheta _\e -\eta_\e\beta^2 )\frac{x_2^2x_3}{4}+\sum_{\underline{i}\in \{0,\ldots,3\}^n\setminus\{(1,1,1,0\ldots ,0), (0,2,1,0,\ldots,0)\}} (c_{\underline{i}}(\eta_\e,\vartheta _\e) \cdot e_3)\, x^{\underline{i}} 
\]
and hence 
\begin{align*}
Q_\e\cdot e_3=& -( v_\e\cdot e_3 )x_1 (\alpha x_1+\beta x_2) +\vartheta _\e \beta  x_1x_2x_3  -(2\alpha \vartheta _\e -\eta_\e\beta^2 )\frac{x_2^2x_3}{4}\\
&+\sum_{\underline{i}\in \{0,\ldots,3\}^n\setminus\{(1,1,1,0\ldots ,0), (0,2,1,0,\ldots,0)\}} (c_{\underline{i}}(\eta_\e,\vartheta _\e )\cdot e_3)\,  x^{\underline{i}} .
\end{align*}
By means of Lemma \ref{lem:PolynomPart} we have that the coefficients of $Q_\e\cdot e_3$ must converge to something and thus $\eta_\e,\vartheta _\e $ and $( v_\e\cdot e_3 )$ converges to something. In particular we have that $v_\e\rightarrow v$,  $\vartheta _\e \rightarrow \vartheta$ and $\eta_\e\rightarrow \eta$. Since the coefficient of $Q_\e$ are all expressed as polynomial functions of $\eta_\e,\vartheta _\e $ and $v_\e$ we have that $Q_\e\rightarrow Q$ for $Q$ solving \eqref{eqn:PolynomialRemainder}. \\
\smallskip

\textbf{Step two:} \textit{limit of the one-dimensional part}. Set $w_\e:=u_\e-Q_\e-L_\e$. Thanks to Step one, two and a standard compacntess argument we know that $w_\e\rightarrow w:=\bar{u}-Q$ in $L^1$ for some $Q$ solving \eqref{eqn:PolynomialRemainderMainThm}. Since
\[
w_\e(x)=a\psi_1^{\e}(x\cdot b)+b \psi_2^\e(x\cdot a),
\]
pick now $z\in b^{\perp}$ for which $a\cdot z \neq 0$ and note that
\[
s^z_\e(x):=\frac{z}{a\cdot z} \cdot w_\e=\psi_1^\e(x\cdot b)
\]
where $\psi_1$ is actually smooth since $w_\e$ is smooth. Since $w_\e \rightarrow w$ in  $L_{\loc}^1(\R^n)$ hence $s^z_\e \rightarrow \frac{z}{a\cdot z}\cdot w$. From the otherside, by applying Lemma \ref{lem:unid} we conclude $s^z_\e \rightarrow s(x)=\psi_1(x\cdot b)$ for a measurable function $\psi_1\in L^1_{\loc}(\R)$ independent of $z$ (since $s^z_\e(x)=\psi_1^\e(x\cdot b)$). Thence
 \begin{equation}\label{dude}
 z\cdot w=(a\cdot z) \psi_1(x\cdot b)  \ \ \text{for all $z\in b^{\perp}$}.
 \end{equation}
 Analogously, starting from $h\in a^{\perp}$ we conclude
 \begin{equation}\label{dude2}
 h\cdot w= (b\cdot h) \psi_2(x\cdot a)  \ \ \text{for all $h\in a^{\perp}$},
 \end{equation}
 for some measurable function $\psi_2\in L^1_{\loc}(\R)$.
 We now choose $z_1,\ldots ,z_{n-1}$ orthonormal basis of $b^{\perp}$ and $h_1,\ldots,h_{n-1}$ orthonormal basis of $a^{\perp}$. Observe - by \eqref{dude} -  that 
 \begin{align*}
w(x)=&\sum_{i=1}^{n-1}z_i (z_i\cdot w(x)) + \left(\frac{b}{|b|}\cdot w(x)\right)\frac{b}{|b|}=\sum_{i=1}^{n-1}z_i \left(a\cdot z_i\right) \psi_1(x\cdot b) +\left(\frac{b}{|b|}\cdot w(x)\right)\frac{b}{|b|}.
 \end{align*}
 By also expressing 
 \[
 \frac{b}{|b|}=\sum_{i=1}^{n-1} \left( \frac{b}{|b|}\cdot h_i \right)h_i + \left(\frac{b}{|b|}\cdot \frac{a}{|a|} \right)\frac{a}{|a|}
 \]
and using \eqref{dude2}, we can further write
  \begin{align}
 w(x) =&\sum_{i=1}^{n-1}z_i \left(a\cdot z_i\right) \psi_1(x\cdot b) + \sum_{i=1}^{n-1}\left(\frac{b}{|b|}\cdot h_i\right)(h_i\cdot w(x))\frac{b}{|b|}  + \left(\frac{b}{|b|}\cdot \frac{a}{|a|}\right)\left(\frac{a}{|a|}\cdot w(x)\right)\frac{b}{|b|}\nonumber\\
=& a\psi_1(x\cdot b) + \psi_2(x\cdot a) \frac{b}{|b|^2}\sum_{i=1}^{d-1}(b\cdot h_i)^2+\psi_2(x\cdot a) \frac{b}{|b|^2}\left(b\cdot \frac{a}{|a|}\right)^2 \nonumber   \\
&+\left(\frac{b}{|b|}\cdot \frac{a}{|a|}\right)\left(\frac{a}{|a|}\cdot w(x)\right)\frac{b}{|b|}-\psi_1(x\cdot b) \left(a\cdot \frac{b}{|b|}\right) \frac{b}{|b|}- \psi_2(x\cdot a) \frac{b}{|b|^2}\left(b\cdot \frac{a}{|a|}\right)^2 \nonumber\\
=& a\psi_1(x\cdot b) + b\psi_2(x\cdot a)  \nonumber \\
&+ \left(\frac{b}{|b|}\cdot \frac{a}{|a|}\right)\frac{b}{|b|}\left[\left(\frac{a}{|a|}\cdot w(x)\right) -|a|\psi_1(x\cdot b)  -\left(b\cdot \frac{a}{|a|}\right) \psi_2(x\cdot a) \right]\label{finalfinal}
 \end{align}
 We immediately conclude if $b\cdot a=0$. Otherwise we multiply \eqref{finalfinal} by  $h\in a^{\perp}$ and, using \eqref{dude2}, we have
\begin{align*}
(b\cdot h) \psi_2(x\cdot a) =&   (b\cdot h)\psi_2(x\cdot a) \\
&+ \left(\frac{b}{|b|}\cdot \frac{a}{|a|}\right)\frac{b\cdot h }{|b|}\left[\left(\frac{a}{|a|}\cdot w(x)\right) -|a|\psi_1(x\cdot b)  -\left(b\cdot \frac{a}{|a|}\right) \psi_2(x\cdot a) \right]
 \end{align*}
 yielding
 \[
\left(\frac{b}{|b|}\cdot \frac{a}{|a|}\right)\frac{b\cdot h}{|b|}\left[\left(\frac{a}{|a|}\cdot w(x)\right) -|a|\psi_1(x\cdot b)  -\left(b\cdot \frac{a}{|a|}\right) \psi_2(x\cdot a) \right]=0 \ \ \text{for all $h\in a^{\perp}$}.
 \]
 By selecting an $h$ for which $(b\cdot h)\neq 0$ (which exists since $a\neq \lambda b$) we conclude 
 \[
 \left[\left(\frac{a}{|a|}\cdot w(x)\right) -|a|\psi_1(x\cdot b)  -\left(b\cdot \frac{a}{|a|}\right) \psi_2(x\cdot a) \right]=0
 \]
 which implies, by \eqref{finalfinal}
 \[
w(x)=a\psi_1(x\cdot b) + b\psi_2(x\cdot a).
 \]
 \smallskip
 
\textbf{Step three:} \textit{bounded variation of the one-dimensional part.} We are just left to show that the $\psi'$s are $\BV_{\loc}$. We thus express (for the sake of shortness) $a=  e_1$, $h=e_2$ and $b=\alpha e_1+\beta e_2$. Then
\[
u(x)=  e_1 [\psi_1(\alpha x_1+\beta  x_2) +\alpha \psi_2( x_1)]+\beta  e_2\psi_2(x_1)  
\]
We choose $\varphi \in C^{\infty}_c([-1,1])$ with $\|\varphi \|_{\infty}\leq 1$ and $\eta,\eta_3,\ldots,\eta_{n}\in C^{\infty}_c(\R)$ with $\int  \eta_i \d t=1$. We consider thus
\[
\Phi(x):= \varphi(x_1)\eta(x_2)\eta_3(x_3)\ldots \eta_n(x_n) (e_1\odot e_2).
\]
Note that 
\[
2\dive(\Phi)(x)=\varphi(x_1)\eta'(x_2)\eta_3(x_3)\ldots \eta_n(x_n)) e_1 + \varphi' (x_1)\eta_2(x_2)\eta_3(x_3)\ldots \eta_n(x_n)) e_2.
\]
We thus apply \eqref{eqn:GGformula} and the fact that $\int \eta_i=\int \eta=1$ to infer
\begin{align*}
2&\int_{\R^n} \Phi \cdot \d \E_d u(x)=2\int_{\R^n} (\dive(\Phi) \cdot u) \d x\\
&=\int_{\R^n} \varphi(x_1)\eta'(x_2)\eta_3(x_3)\ldots \eta_n(x_n)) u_1 \d x+  \int_{\R^n}\varphi' (x_1)\eta(x_2)\eta_3(x_3)\ldots \eta_n(x_n)) u_2\d x \\
&=\int_{\R^2} \varphi(x_1)\eta'(x_2) [ \psi_1(\alpha x_1+\beta   x_2) +\alpha \psi_2( x_1)] \d x_1 \d x_2+ \beta  \int_{\R}\varphi' (x_1) \psi_2(x_1) \d x_1.
\end{align*}
By rearranging the above relation, setting $K=\spt(\Phi)$, $K_2:=\spt(\varphi \eta )\subset \R^2$, we obtain
\begin{align*}
\left|\int_{\R} \varphi'(x_1) \psi_2 (x_1) \d x_1\right| \leq C \left[|\E_d u|(K) +  \int_{K_2}[|\psi_1(\alpha x_1+\beta x_2)|+|\psi_2(x_1)|] \d x_1 \d x_2\right]
\end{align*}
with the constant $C$ possibly depending on $\eta, \eta'_i$ but not on $\varphi$.
For any fixed $T>0$, the right hand side is independent of  $\varphi \in C^{\infty}_c([-T,T])$  with $\|\varphi\|_{\infty}\leq 1$ - being $K\subset [-T,T]\times \spt(\eta)\times \spt(\eta_3)\times \ldots\times \spt(\eta_d)$ - yielding that $\psi_2\in \BV_{\loc}(\R)$. \\

Writing  $a=(a\cdot \frac{b}{|b|}) \frac{b}{|b|}+(a\cdot z)z$ for some $z\in b^{\perp}$) (setting again without loss of generality $b= |b| e_1$, $z=e_2$ and $a=\a_1 e_1+\a_2 e_2$ for some $\a_1,\a_2$) and replicating the above argument (since we have - formally - inverted the role of $x_1$, $x_2$ we can test with the same $\Phi(x)$) we obtain also $\psi_1\in \BV_{\loc}(\R)$.
\end{proof}

\subsection{Rigidity for parallel vectors ($\beta =0$)} \label{sbsct:RigidityParallel}
In this Section we instead provide the proof to the following
\begin{theorem}\label{thm:MainRigidityParallel}
Let $u\in \BDD(A)$ for a connected open set $A\subset \R^n$. Suppose that
\[
\E_d u=(a\otimes_{\E_d}a)\nu
\]
for some $a\in \R^n$, and some Radon measure $\nu \in \mathcal{M}(A;\R)$. Then there exists functions $F \in \BV_{loc}(\R))$, $\{P_j\}_{j=2}^n\subset W^{1,1}_{loc}(\R)$ with $P_j'\in \BV_{loc}$, $G\in W^{1,1}_{loc}(\R)$ with $G'\in \BV_{loc}(\R)$ such that
\begin{equation}\label{eqn:ShapeUForBetaEqualZero}
\begin{split}    
u=& F(a \cdot x) a + \left(\sum_{j=2}^n P_j'(a\cdot x)(w_j\cdot x) + \frac{(w_j\cdot x)^2}{2}G'(a\cdot x)\right)a\\
&-\sum_{j=2}^n ((w_j\cdot x) G(a\cdot x)+P_j(a\cdot x ))w_j+\varrho Q(x)+L(x)
\end{split}
\end{equation}
for $\{w_2,\ldots,w_n\}$ orthonormal basis of $a^{\perp}$, for some $L\in \mathrm{Ker}(\E_d)$, $\varrho \in \R$ and for some third order degree polynom $Q$ solving 
\begin{equation}\label{eqn:PolynomialRemainderMainThmBETAZERO}
\E_d Q=(a\otimes_{\E_d} a) \left( (w_2\cdot x)^2-(w_3\cdot x)^2\right).
\end{equation}
Moreover if $n\geq 4$ then $\varrho=0$.
\end{theorem}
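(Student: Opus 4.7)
The plan mirrors the two-step template of Theorem \ref{thm:MainRigidityNonParallel}: I would first prove the rigidity for smooth $u$ using Lemma \ref{lem:tecnico}, then pass to the limit by mollification. Choosing coordinates so that $a = e_1$ and $b = \alpha e_1$, the parallel case corresponds exactly to $\beta = 0$ in the framework of Lemma \ref{lem:tecnico}.

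For the smooth case, take $u \in C^{\infty}$ with $\E_d u = (a \otimes_{\E_d} a) g$ and set $f = (\dive u - \alpha g)/n$. Specialising Lemma \ref{lem:tecnico} to $\beta = 0$ simplifies the system considerably: from \eqref{ONEone}, \eqref{TWOtwo} and \eqref{THREEone}, every mixed partial of $f$ vanishes, so $f$ is separable along the coordinate directions; the equations \eqref{ONEtwo}, \eqref{TWOone}, \eqref{THREEtwo} then reduce to purely algebraic constraints among the pure second derivatives of $f$ and $g$. These constraints can be integrated explicitly: $\nabla g$ becomes one-dimensional in $x_1$ up to quadratic polynomial corrections in the transverse variables, whose coefficients depend on $x_1$. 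Integrating $\nabla u$ via \eqref{ZERO}--\eqref{THREE} then produces \eqref{eqn:ShapeUForBetaEqualZero} with smooth profiles $F$, $G$, $P_j$. A key dichotomy emerges here: for $n \geq 4$, equation \eqref{THREEtwo} (applied to at least two distinct pairs $i, j \geq 3$) forces $\partial_{ii} f = 0$ for every $i \geq 2$, and in particular rules out a polynomial correction of the form $\varrho Q$; for $n = 3$, the same equation is vacuous and the residual freedom $\partial_{22} f = -\partial_{33} f$ integrates to a genuine $\varrho Q$ solving \eqref{eqn:PolynomialRemainderMainThmBETAZERO}.

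For general $u \in \BDD$, I would mollify $u_{\epsilon} = u \star \varrho_{\epsilon}$ via Lemma \ref{lem:Approx}, apply the smooth case to $u_{\epsilon}$, subtract an appropriate kernel element $L_{\epsilon} \in \mathrm{Ker}(\E_d)$, and use the compactness Theorem \ref{thm:cmp} to pass to the limit of $u_{\epsilon} - L_{\epsilon}$ in $L^1_{\loc}$. The polynomial part is handled in the spirit of Proposition \ref{prop:PolynomialPartSolutions}: when $n = 3$, an explicit computation identifies the coefficient $\varrho_{\epsilon}$ as a specific polynomial component that can be read off $u_{\epsilon} \cdot w_j$, so that Lemma \ref{lem:PolynomPart} forces $\varrho_{\epsilon} \to \varrho$. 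Subtracting $\varrho_{\epsilon} Q$, the remaining profile $w_{\epsilon}$ converges in $L^1_{\loc}$ and Lemma \ref{lem:unid}, applied to projections of the components against vectors in $a^{\perp}$ and along $a$, extracts the measurable profiles $F, G, P_j$ depending only on $x_1 = a \cdot x$. The regularity of these profiles (namely $F \in \BV_{\loc}$, $P_j, G \in W^{1,1}_{\loc}$ with $P_j', G' \in \BV_{\loc}$) is then obtained, as at the end of the non-parallel proof, by testing the Green--Gauss identity \eqref{eqn:GGformula} against tensorised fields of the form $\Phi(x) = \varphi(x_1) \prod_{k=2}^n \eta_k(x_k) \, (e_1 \odot e_j)$ for suitable indices $j$; each choice isolates one profile and bounds $\int \varphi' \psi$ in terms of $|\E_d u|$ and $L^1$-norms of the profiles.

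The main obstacle I anticipate is the dimension-three case: \emph{a priori} the floating coefficient $\varrho_{\epsilon}$, together with the coefficients of the mixed quadratic terms in $g_{\epsilon}$, could diverge with $\epsilon$, since $\dive u_{\epsilon}$ is not controlled by $\E_d u$ in $\BDD$. Ruling this out requires an explicit computation of the polynomial remainder in the spirit of Subsection \ref{sbsbsct:FeaturesPolSolNONPAR}, expressing all floating constants as prescribed components of $u_{\epsilon}$ itself, so that their convergence is a direct consequence of the $L^1_{\loc}$ convergence of $u_{\epsilon} - L_{\epsilon}$. In dimension $n \geq 4$ the extra constraint \eqref{THREEtwo} collapses $\varrho$ already at the smooth level, making the passage to the limit essentially automatic.
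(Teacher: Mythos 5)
Your smooth-case analysis is essentially the paper's: the relevant sub-system from Lemma~\ref{lem:tecnico} at $\beta=0$ forces all mixed partials of $f$ to vanish, the pure second derivatives become constants, and a dimension count on \eqref{THREEtwo} correctly explains why $\varrho$ collapses for $n\geq 4$ but survives for $n=3$. That part matches Lemma~\ref{lem:rigForRegUnidir}.

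The passage to the limit, however, is where your plan would run into trouble. First, Lemma~\ref{lem:unid} extracts a profile only when a scalar sequence is a pure function of one linear coordinate. In the parallel case the transverse components of $u_\e - L_\e$ have the form $-x_j\Psi_\e'(x_1)-P_{\e,j}(x_1)$ (plus $\varrho_\e Q\cdot e_j$ for $n=3$), which genuinely depend on both $x_1$ and $x_j$; Lemma~\ref{lem:unid} does not directly yield $\Psi'_\e\to G$ or $P_{\e,j}\to P_j$. Second, Lemma~\ref{lem:PolynomPart} applies to a converging sequence of polynomials, but here no component of $u_\e-L_\e$ is a polynomial: every transverse component carries the arbitrary profiles $\Psi'_\e$ and $P_{\e,j}$. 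This is precisely what made the non-parallel Step one work (for $j\geq 3$ the components $(u_\e-L_\e)\cdot e_j$ there were pure polynomials) and what fails when $a\parallel b$, so $\varrho_\e$ cannot be read off a polynomial component of $u_\e\cdot w_j$. What the paper does instead is test $g_\e$ and the components of $u_\e$ against tensorised kernels $\varphi(x_1)\eta_2(x_2)\cdots\eta_n(x_n)$ whose transverse moments $\int\eta_i$, $\int t\,\eta_i$, $\int\tfrac{t^2}{2}\eta_i$ are free parameters; varying the parities and the values $s_i$ decouples and uniformly bounds, in $L^1$ of the slice, each of $H'_\e$, $\Psi''_\e$, $\Psi'''_\e$, $P'_{\e,j}$, $P''_{\e,j}$, and (for $n=3$) the scalar $\varrho_\e$, after which ordinary $\BV$/$W^{1,1}$ compactness in one variable finishes the argument. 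You gesture at this kind of testing only for the final $\BV$ regularity, but it is in fact the mechanism that delivers both the identification of the profiles and the control on the floating constants; without it the limit step as you describe it does not close.
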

\begin{remark}\label{rmk:cfParallelAndNotParallel}
Note that since having $\beta=0$ result in fewer controls on the relevant quantities (cf.  Lemma \ref{lem:tecnico}) there is no surprise that we have a more complex structure on the $u$, compared to the case $\beta \neq0$. However still the lowest regularity info's is on the one directional part along $a$ which is only $\BV$ and no more.    
\end{remark}
As before we place ourself in coordinate so that $a=e_1$ and we focus first on the $C^{\infty}$ case.  \begin{lemma}\label{lem:rigForRegUnidir}
    Let $u\in C^{\infty}(\R^n;\R^n)$, $g\in C^{\infty}(\R^n)$ be such that
    \begin{equation}\label{eqn:PDEforBEqualZero}
        \E_d u= \left[ e_1\odot e_1 - \frac{1}{n} \Id \right] g.
    \end{equation}
Then necessarily, for some $\psi, h, p_j:\R \rightarrow \R$ it holds
\begin{equation}\label{eqn:ShapeOfGBetaEqualZero}
g=h(x_1)+\sum_{j=2}^n p_j(x_1) x_j 
+\psi(x_1)\sum_{j=2}^n \frac{x_j^2}{2}+\varrho(x_2^2-x_3^2)
\end{equation}
and consequently
\begin{equation}\label{eqn:ShapeRegUForBetaEqualZero}
\begin{split}    
u=&(H(x_1)-\Psi(x_1))e_1 + \left(\sum_{j=2}^n P_j'(x_1)x_j + \frac{x_j^2}{2}\Psi''(x_1)\right)e_1\\
&-\sum_{j=2}^n (x_j \Psi'(x_1)+P_j(x_1))e_j+\varrho Q(x)+L(x)
\end{split}
\end{equation}
for $L\in \mathrm{Ker}(\E_d)$, $\Psi, H, P_j:\R \rightarrow \R$ such that $\Psi'''=\psi$, $H'=h$, $P''_j=p_j$ and for a polynom $Q$ solving
\begin{equation} 
\E_d Q=(e_1\otimes_{\E_d} e_1) \left( x_2^2-x_3^2\right).
\end{equation}
Moreover, if $n\geq 4$, $\varrho=0$.
\end{lemma}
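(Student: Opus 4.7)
The plan is to specialise Lemma \ref{lem:tecnico} to the parallel case (coordinates $a=e_1=b$, so $\alpha=1$ and $\beta=0$), use the resulting PDE system to pin down the structure of $g$, and then exhibit a particular solution to $\E_d u=(e_1\otimes_{\E_d}e_1)g$ by summing elementary building blocks, closing with Proposition \ref{prop:ker} to absorb the residual Killing field $L$.

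Writing $f:=(\dive u-g)/n$ as in Lemma \ref{lem:tecnico}, a first inspection of \eqref{ONEone}, \eqref{TWOtwo} and \eqref{THREEone} shows that all off-diagonal entries of $\mathrm{Hess}(f)$ vanish, so every $\partial_i f$ depends only on $x_i$ and $f=F_1(x_1)+\ldots+F_n(x_n)$; equation \eqref{ONEthree}, combined with $\partial_{2j}f=0$, gives $\partial_{2j}g=0$ for $j\geq 3$, which together with \eqref{TWOthree} yields $\partial_{ij}g=0$ for all $i\neq j$ with $i,j\geq 2$; finally \eqref{ZEROone} and \eqref{ONEtwo} give $\partial_{22}g=-F_1''(x_1)-F_2''(x_2)$ and $\partial_{jj}g=-F_1''(x_1)-F_j''(x_j)$ for $j\geq 3$.

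The dimensional dichotomy now arises from the remaining equations on the $F_j''$. For $n\geq 4$, applying \eqref{THREEtwo} cyclically to three distinct indices $i,j,k\geq 3$ forces $F_j''\equiv 0$ for every $j\geq 3$, after which \eqref{TWOone} also gives $F_2''\equiv 0$; all the $\partial_{jj}g$ with $j\geq 2$ coincide then with $\psi(x_1):=-F_1''(x_1)$, and integration yields \eqref{eqn:ShapeOfGBetaEqualZero} with $\varrho=0$. For $n=3$, equation \eqref{TWOone} reduces to $F_2''(x_2)+F_3''(x_3)=0$; since its two sides depend on disjoint variables, both must equal opposite constants $-2\varrho$ and $2\varrho$, so $\partial_{22}g$ and $\partial_{33}g$ differ by $4\varrho$, and integration produces exactly the extra $\varrho(x_2^2-x_3^2)$ summand.

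To recover $u$, I exploit linearity of $\E_d$ and exhibit a particular solution for each elementary piece of $g$: $H(x_1)e_1$ (with $H'=h$) for the $h(x_1)$ part; $P_j'(x_1)x_j e_1-P_j(x_1)e_j$ (with $P_j''=p_j$) for the $p_j(x_1)x_j$ part; $-\Psi(x_1)e_1+\sum_{j\geq 2}\tfrac{x_j^2}{2}\Psi''(x_1)e_1-\sum_{j\geq 2}x_j\Psi'(x_1)e_j$ (with $\Psi'''=\psi$) for the $\psi(x_1)\sum x_j^2/2$ part, the $-\Psi(x_1)e_1$ correction being precisely what kills the spurious trace contribution from the higher-order terms; and, when $n=3$, the explicit cubic $Q:=\bigl(0,\,x_2x_3^2-\tfrac{x_2^3}{3},\,\tfrac{x_3^3}{3}-x_2^2x_3\bigr)$, whose $\E_d Q$ one verifies equals $(e_1\otimes_{\E_d}e_1)(x_2^2-x_3^2)$ by direct computation. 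Summing these and adding any $L\in\mathrm{Ker}(\E_d)$ produces exactly \eqref{eqn:ShapeRegUForBetaEqualZero}. The only genuine obstacle is the bookkeeping: one must first decouple $f$ into a sum of one-variable functions \emph{before} using the $F_j''$-identities, because otherwise the $x_2$- and $x_1$-dependencies of $\partial_{22}g$ cannot be separated cleanly; once this decoupling is in place, the $n\geq 4$ versus $n=3$ alternative comes down to a simple cardinality count on the indices available to \eqref{THREEtwo}, and the assembly of the particular solutions is routine.
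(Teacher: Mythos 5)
Your argument follows essentially the same path as the paper's proof: specialise Lemma \ref{lem:tecnico} to $\alpha=1$, $\beta=0$, pin down the Hessian of $f$ and then $g$, and exhibit an explicit potential whose $\E_d$ reproduces each elementary piece of $g$, closing with the kernel characterisation. Your presentation is slightly cleaner in that you first observe that \emph{all} mixed second partials of $f$ vanish (from \eqref{ONEone}, \eqref{TWOtwo}, \eqref{THREEone}) and so decompose $f=\sum_i F_i(x_i)$ once and for all, which makes the subsequent bookkeeping transparent; the paper instead extracts $\partial_{11}f=S_1(x_1)$, $\partial_{22}f=S_2(x_2)$, $\partial_{33}f=S_3(x_3)$ equation by equation. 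Your explicit cubic $Q=(0,\,x_2x_3^2-\tfrac{x_2^3}{3},\,\tfrac{x_3^3}{3}-x_2^2x_3)$ is correct and is a nice concrete addition since the paper leaves $Q$ implicit. The verification of the three building blocks for $u$ is also correct.

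There is, however, one genuine gap: the claim that for $n\geq 4$ one can ``apply \eqref{THREEtwo} cyclically to three distinct indices $i,j,k\geq 3$'' to force $F_j''\equiv 0$ for $j\geq 3$ fails for $n=4$, where the only indices $\geq 3$ are $3$ and $4$ and \eqref{THREEtwo} gives only the single identity $F_3''+F_4''=0$. As written, you then invoke \eqref{TWOone} only \emph{after} having (incorrectly) concluded $F_j''\equiv 0$ for $j\geq 3$, so the chain of reasoning does not close for $n=4$. The fix is to use \eqref{TWOone} and \eqref{THREEtwo} jointly, as the paper does: from \eqref{TWOone}, $F_3''=F_4''=-F_2''$, and then \eqref{THREEtwo} yields $-2F_2''=F_3''+F_4''=0$, giving $F_2''=F_3''=F_4''=0$. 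With this correction the dichotomy $n\geq 4$ versus $n=3$ holds as claimed, and the rest of your argument is sound.
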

\begin{proof}
We now split the proof in three steps: the first two to compute the $g$ dependently on weather $n=3$ or $n\geq4$, and the last one to conclude. Just restricted to this proof will be useful the notation: for $w:\R^n\rightarrow \R^n$ write $w(\hat{x}_{i_1},\ldots, \hat{x}_{i_k})$ to emphasize that the function $w$ does not depend on the variable $x_{i_1},\ldots x_{i_k}$ (but possibly depend on all the others).\\
\smallskip

\textbf{Step one:} \textit{computing the $g$ for $n\geq 4$}. 
In this case, as already observed in \ref{rmk:ngeq4}, we have (due to \eqref{TWOone}, \eqref{THREEtwo}$) \partial_{jj}f=0$ for all $j\geq 2$. Also, by invoking Lemma \ref{lem:tecnico} (for $\a=1, \beta=0$) we obtain the relevant set of equations
\begin{align}
\partial_{11} f+\partial_{22}f=&-\partial_{22}g \label{1122}\\
 \partial_{21} f=&0\ \ \ &\text{for all $j\geq 3$},  \label{12}\\
 -\partial_{2j}g - \partial_{2j}f=&0\ \ \ &\text{for all $j\geq 3$},  \label{2j2j}\\
\partial_{22}f +\partial_{jj}f=&0 \ \ \ &\text{for all $j\geq 3$},   \label{f22jj}\\
 -\partial_{1j}f=&0 \ \ \ &\text{for all $j\geq 3$},   \label{f1j}\\
\partial_{kj} g=&0 \ \ \ &\text{for all $k,j\geq 3$, $k\neq j$},  \label{gkj}\\
\partial_{kj} f  =&0\ \ \ &\text{for all $k\neq j$ and $j\geq 3$},   \label{fki}
\end{align}
By \eqref{fki} and \eqref{2j2j}:
\begin{equation}\label{set1}
\partial_{2j}f=0  \ \ j\geq 3,  \ \ \partial_{2j}g=0 \ \ j\geq 3.
\end{equation}
The above and \eqref{12} implies
\begin{equation}\label{set2}
\nabla(\partial_2 f)=0, \ \ \nabla(\partial_1 f)=(-\partial_{jj}g)e_1 \ \ j\geq 3, \ \ \nabla(\partial_j f)=0\ \ j\geq 3.
\end{equation}
and \eqref{1122} gives additionally
\[
-\partial_{22}g=\partial_{11}f.
\]
Since \eqref{12}, \eqref{f1j} yields $\partial_k (\partial_{11}f)=0$ for $k\neq 1$ it follows that
\begin{equation}\label{22jj}
\partial_{22}g= \partial_{jj}g=\psi(x_1) \ \ j \geq 3.
\end{equation}
for some $\psi:\R \rightarrow\R$. Thence, for some $p_2$, it must hold
\[
\partial_2 g=p_2(\hat{x}_2)+\psi(x_1) x_2 .
\]
By \eqref{set1} we now must have $p_2(\hat{x}_2)=p_2(x_1)$ and (for some $G$)
\[
g=G(\hat{x}_2)+p_2(x_1)x_2+\frac{x_2^2}{2} \psi(x_1) .
\]
By $\eqref{gkj}$ and  $\eqref{22jj}$ we have $\partial_{jj} G(\hat{x}_2)=\psi(x_1)$ for $j\geq 2$ and $\partial_{kj} G(\hat{x}_2)=0$. Thus, for some $p_j$, 
\[
\partial_j G(\hat{x}_2)=p_j(x_1)+\psi(x_1)x_j, \ j\geq 3, \ \partial_2 G=0.
\]
 In particular
 \[
 G(\hat{x}_2)= h_j(\hat{x_j},\hat{x}_2)+p_j(x_1)x_j+\psi(x_1) \frac{x_j^2}{2}\ \ j\geq 3
 \]
for some $h_j$, which means
 \[
 G(\hat{x}_2)= h(\hat{x}_2)+\frac{1}{n-2}\sum_{j=3}^n p_j(x_1)x_j +\frac{\psi(x_1)}{n-2}\sum_{j=3}^n \frac{x_j^2}{2}
 \]
where $h(\hat{x}_2)=\frac{1}{n-2}\sum_{j=3}^n h_j(\hat{x}_2,\hat{x}_j)$. Since $\partial_j G(\hat{x}_2)=\psi(x_1)x_j+p_j(x_1)$ we immediately have $\partial_j h(\hat{x}_2)=0 $ for all $j\geq 3$ and thus $h(\hat{x}_2)=h(x_1)$. Hence, up to rename $\psi, p_j$ to include the factor $\frac{1}{n-2}$):
 \[
 g=   h(x_1)+\sum_{j=2}^n p_j(x_1)x_j+\psi(x_1)\sum_{j=2}^n \frac{x_j^2}{2}.
 \]
 \smallskip

\textbf{Step two:} \textit{computing the $g$ for $n=3$.} In this case the relevant set of equation from Lemma \ref{lem:tecnico} reduces to
\begin{align}
\partial_{11} f+\partial_{22}f=&-\partial_{22}g \label{11223d}\\
 \partial_{21} f=&0, \label{213d}\\
\partial_{11} f+\partial_{33}f+\partial_{33} g=&0,  \label{11333d}\\
 -\partial_{23}g - \partial_{23}f=&0, \label{23233d}\\
\partial_{22}f +\partial_{33}f=&0, \label{22333d}\\
 -\partial_{13}f=&0,\label{133d}\\  
\partial_{23} f  =&0.\label{233d}
\end{align}
From \eqref{213d} and \eqref{133d} we get
\[
\partial_{211}f=\partial_{311}f=0 \ \ \Rightarrow \ \partial_{11}f=S_1(x_1).
\]
From \eqref{213d} and \eqref{233d} we get 
\[
\partial_{122}f=\partial_{322}f=0 \ \ \Rightarrow \ \partial_{22}f=S_2(x_2).
\]
From \eqref{133d} and \eqref{233d} we get 
\[
\partial_{133}f=\partial_{233}f=0\ \ \Rightarrow \ \partial_{33}f=S_3(x_3).
\]
But because of \eqref{22333d} clearly $S_2(x_2)=\varrho$, $S_3(x_3)=-\varrho$ for some constant $\varrho \in \R$. Thus setting $\psi=S_1$ we have from \eqref{11223d} and \eqref{11333d}:
\begin{align}
-\partial_{22} g=&\psi(x_1)+\varrho,\label{22rho}\\
-\partial_{33}g=&\psi(x_1)-\varrho\label{33rho}.
\end{align}
From \eqref{22rho} and $\partial_{23}g=0$ (obtained by combining \eqref{233d} and \eqref{23233d}) we get
\begin{equation}\label{d2gfinal}
-\partial_2 g= p_2(x_1) +(\psi(x_1)+\varrho) x_2
\end{equation}
and thus
\[
- g=h_2(x_1,x_3)+p_2(x_1)x_2+\left(\psi(x_1)+\varrho\right)\frac{x_2^2}{2}.
\]
From \eqref{33rho} and again $\partial_{23}g=0$ we instead get
\begin{equation}\label{d3gfinal}
-\partial_3 g= p_3(x_1) +(\psi(x_1)-\varrho) x_3
\end{equation}
and
\[
-g= h_3(x_1,x_2)+ p_3(x_1)x_3 + \left(\psi(x_1)-\varrho\right)\frac{x_3^2}{2}.
\]
Thus, we have (setting $h(x_1,x_2,x_3)=h_2(x_1,x_3)+h_3(x_1,x_2)$)
\begin{align*}
    -g=\left(\psi(x_1)+\varrho\right)\frac{x_2^2}{2}+\left(\psi(x_1)-\varrho\right)\frac{x_3^2}{2}+p_2(x_1)x_2+ p_3(x_1)x_3 + h(x_1,x_2,x_3)
\end{align*}
but the relations \eqref{d2gfinal}, \eqref{d3gfinal} also implies $\partial_2 h=\partial_3 h=0$ and thence (up to a renaming of $\psi,p_j,\varrho$ ) 
\begin{align*}
    g=\psi(x_1)\sum_{j=2}^3 \frac{x_j^2}{2}+\varrho(x_2^2-x_3^2) +h(x_1)+\sum_{j=2}^3 p_j(x_1)x_j.
\end{align*}
\textbf{Step three:} \textit{computing the $u$}. It is now a simple computation to check that a special solution to \eqref{eqn:PDEforBEqualZero} with a $g$ as in \eqref{eqn:ShapeOfGBetaEqualZero} is given by
\begin{align*}
    w= (H(x_1)-\Psi(x_1))e_1+e_1\left(\sum_{j=2}^n  P'_j(x_1) x_j+ \frac{x_j^2}{2}\Psi''(x_1)\right) - \sum_{j=2}^n e_j\left(P_j(x_1)+x_j \Psi'(x_1) \right)  + \varrho Q
\end{align*}
with $H'=h, \Psi'''=\psi$ e $P''_j=p_j$, and where $Q$ solves
\[
\E_d Q=(e_1\otimes_{\E_d} e_1) (x_2^2-x_3^2). 
\]
Indeed
\begin{align*}
  \nabla w= &(H'(x_1)-\Psi'(x_1))e_1\otimes e_1+e_1\otimes e_1 \left(\sum_{j=2}^d  P''_j(x_1) x_j+ \frac{x_j^2}{2}\Psi'''(x_1)\right)  \\
  & +e_1\otimes e_j \left(\sum_{j=2}^d  P'_j(x_1)  + x_j \Psi''(x_1)\right)-e_j\otimes e_1 \left(P'_j(x_1)+\sum_{j=2}^n x_j \Psi''(x_1))\right) \\
    &- \sum_{j=2}^n e_j\otimes e_j \Psi'(x_1) + \varrho\nabla Q
\end{align*}
and thus
\begin{align*}
  \E w= &\left((H'(x_1)-\Psi'(x_1))+ \sum_{j=2}^d  P''_j(x_1) x_j+ \frac{x_j^2}{2}\Psi'''(x_1)\right)e_1\otimes e_1  -\sum_{j=2}^n e_j\otimes e_j \Psi'(x_1)   + \varrho\E Q\\
  = &( g(x)-\varrho(x_2^2 - x_3^2)-\Psi'(x_1)) e_1\otimes e_1  -\sum_{j=2}^n e_j\otimes e_j \Psi'(x_1)   + \varrho\E Q\\
    = &( g(x)-\varrho(x_2^2 - x_3^2) ) e_1\otimes e_1  -  \Psi'(x_1)\Id   + \varrho\E Q\\
\end{align*}
Since
\begin{align*}
  \trace(\E w)= &  g(x)- \varrho(x_2^2 - x_3^2)- n\Psi'(x_1) + \varrho\trace(\E Q)
\end{align*}
we conclude
\begin{align*}
    \E_d w=&( g(x)-\varrho(x_2^2 - x_3^2)) e_1\otimes e_1  -\Psi'(x_1)\Id    - \frac{g(x) - \varrho(x_2^2 - x_3^2) - n\Psi'(x_1)}{n} \Id + \varrho\E_d Q\\
    =& (g(x)- \varrho(x_2^2 - x_3^2))e_1\otimes_{\E_d}e_1 + \varrho\E_d Q\\
    =&g(x)e_1\otimes_{\E_d}e_1 + \varrho(x_2^2 - x_3^2)e_1\otimes_{\E_d}e_1=  g(x) e_1\otimes_{\E_d}e_1.
\end{align*}
Thus $u$ must be equal to $w$ up to a Kernel element $L\in \mathrm{Ker}(\E_d)$ achieving the structure as in \eqref{eqn:ShapeRegUForBetaEqualZero} and concluding the proof.
\end{proof}
We can now prove the rigidity structure for parallel vectors.
\begin{proof}[Proof of Theorem \ref{thm:MainRigidityParallel}]
Without loss of generality we can assume that $a=e_1$. Thus by mollifying $u_\e:=u\star \varrho_\e$ and applying \ref{lem:rigForRegUnidir} we get that
\[
g_{\e}=h_{\e}(x_1)+\sum_{j=2}^n p_{\e,j}(x_1) x_j 
+\psi_{\e}(x_1)\sum_{j=2}^n \frac{x_j^2}{2}+\varrho_{\e}(x_2^2-x_3^2)
\]
and that 
\begin{equation}\label{eqn:generalShapePAr}
\begin{split}    
u_\e=&(H_\e(x_1)-\Psi_\e(x_1))e_1 + \left(\sum_{j=2}^n P_{\e,j}'(x_1)x_j + \frac{x_j^2}{2}\Psi_\e''(x_1)\right)e_1\\
&-\sum_{j=2}^n (x_j \Psi_\e'(x_1)+P_{\e,j}(x_1))e_j+\varrho_\e Q(x)+L_\e(x)
\end{split}
\end{equation}
with $P''_{\e,j}=p_{\e,j}$, $\Psi'''_{\e}=\psi_{\e}$, $H'_{\e}=h_{\e}$.\\

All is left to do is to compute the limit. Note that as before we conclude immediately that $L_\e \rightarrow L\in \mathrm{Ker}(\E_d)$ so without loss of generality we can neglect the term in $L_\e$ in computing the limit and assume rightaway that $u_\e\rightarrow u$ in $L^1$. Recall  also that $g_{\e}\L^d \wt \nu$ and thus
\[
\sup_{\e>0}\{\|g_{\e}\|_{L^1}\}<+\infty. \ \ 
\]

Up to redefine the functions $H_\e,\Psi_\e,P_{\e,j}$ and operating a translation we can restrict ourselves to consider just to check the convergence on the compact set $[-1,1]^n$. \\

We now split the proof in two steps, depending on $n$ being $3$ or bigger than $3$.\\

\textbf{Step one:} \textit{the case $n\geq 4$}. In this case we do not have to handle the polynomial term $Q$ since $\varrho_\e =0$. Let $\eta_i\in C^{\infty}_c([-1,1])$ be such that 
\begin{align*}
 \eta_i\geq 0 \ \ \text{for all $i=1,\ldots,n$}\\
 \int_{[-1,1]} \eta_i(t)\d t=1 \ \ \text{for all $i=2,\ldots,n$}.
 \end{align*}
 Set also
 \begin{align*}
r_i:=\int_{[-1,1]}  t\eta_i (t)\d t , \ \ \ s_i:=\int_{[-1,1]}  \frac{t^2}{2}\eta_i (t)\d t \ \ \ \text{for $i=2,\ldots,n$}.
\end{align*}
Let us denote by $\eta_{\hat j}(\hat{x}_j):=\eta_1(x_1)\ldots \eta_{j-1}(x_{j-1})\eta_{j+1}(x_{j+1})\ldots \eta_n(x_n)$. Since for a fixed $j\geq 2$ we have $(\E_d u_\e)_{1j}=0$ we get $-\partial_{j}(u_\e \cdot e_1) =   \partial_1 (u_\e \cdot e_j)$:
\begin{align*}
-\int_{[-1,1]^n} (u_\e \cdot e_1) \eta_j'(x_j) \eta_{\hat j}  (\hat{x}_j)\d x=&\int_{[-1,1]^n} (u_\e \cdot e_j)\eta_1'(x_1)  {\eta}_{\hat 1}  (\hat{x}_1)\d x\\
=&  -\int_{[-1,1]^2} (x_j \Psi_\e'(x_1) + P_{\e,j}(x_1))\eta_1'(x_1) \eta_j(x_j)\d x_1\d x_j\\
=& -r_j\int_{[-1,1]}  \Psi_\e'(x_1) \eta_1'(x_1)\d x_1- \int_{[-1,1]}P_{\e,j}(x_1))\eta_1'(x_1)  \d x_1.
\end{align*}
By selecting an even function $\eta_j$ we get $r_j=0$ and therefore
\begin{align*}
\left|\int_{[-1,1]}P_{\e,j}(x_1))\eta_1'(x_1)  \d x_1\right|&\leq \|\eta_j'\|_{\infty}\int_{[-1,1]^n} |(u_\e \cdot e_1) | \d x  .
\end{align*}
Since $u_\e\cdot e_1\rightarrow u\cdot e_1$ it follows that  
\begin{equation}\label{bndPprime}
\sup_{\e}\left\{\|P'_{\e,j}\|_{L^1}\right\}\leq C .
\end{equation}
By integrating \eqref{eqn:generalShapePAr} against $e_j \varphi \eta_2\ldots \eta_n$ for $j\geq 3$, $\varphi\in L^{\infty}([-1,1])$ and even $\eta_i\in C^1$, $i\geq 2$ we get
\begin{align*}
    \int_{Q} (u_{\e}(x)\cdot e_j ) \varphi(x_1)\eta_{\hat 1}(\hat{x}_1)\d x&=-\int_{[-1,1]} P_{\e,j}(x_1)\varphi(x_1)\d x,
\end{align*}
and thus, for all $\e>0$, $j=2,\ldots,n$, by duality
\[
\|P_{\e,j}\|_{L^1}=\sup_{\|\varphi\|_{L^\infty} \leq 1}\left\{\left| \int_{[-1,1]} P_{\e,j}(x_1) \varphi(x_1)\d x_1\right|\right\}\leq C \|u_{\e}\|_{L^1}\leq C.
\]
Thence, the above combined with \eqref{bndPprime}, we have $P_{\e,j}\rightarrow P_j$ (up to a subsequence) in $L^1$ and with $P_j\in \BV([-1,1])$. By observing that $u_\e\cdot e_j\rightarrow u\cdot e_j$ in $L^1$ we deduce also that $x_j \Psi'_\e(x_1) \rightarrow (u\cdot e_j)-P_j$ in $L^1$. By integrating over a rectangle for which $x_j\neq 0$ we get $\Psi_\e'\rightarrow G$ for some $G\in L^1$ and thence
\[ 
u_{\e}\cdot e_j \rightarrow x_j G(x_1) + P_j(x_1)=u\cdot e_j
\] 
and 
\begin{equation}\label{bndpsiprime}
\sup_{\e}\left\{\|\Psi'_\e\|_{L^1}\right\}<+\infty.
\end{equation}

Fix $0<s\leq 1$. We now choose $\eta_j=\eta_2$ for all $j\geq 2$ with $\eta_2$ even so that $s_j=s_2=s$ and $r_j=0$ for all $j\geq 2$ and this yields
\begin{align*}
\int_{Q} g_{\e} \eta_1(x_1)\ldots \eta_n(x_n)\d x=&-  \left( s(n-1)    \int_{[-1,1]} \Psi_\e''(x_1)\eta'_1(x_1) \d x_1\right.\\
&\left.+ \int_{[-1,1]} H_\e(x_1)\eta'_1(x_1)\d x_1 \right)
\end{align*}
 
Note that this means that, for all $0<s\leq 1$, there is $C=C(s)$ such that
\[
\sup_\e \left\{\|s(n-1)\Psi'''_\e+H'_\e\|_{L^1}\right\}<C(s).
\]
By selecting two different values for $s$ we can decouple the uniform bound as 
\begin{equation}\label{bndHPsiThird}
\sup_\e \left\{\|H'_\e\|_{L^1}\right\}<+\infty, \ \  
\sup_\e \left\{\| \Psi'''_\e \|_{L^1} \right\}<+\infty.
\end{equation}
Fix now any $j\geq 2$. By now selecting $\eta_i'$s for $i\geq 2$ to be all equal and even except for  $i=j$ (so that $s_i=s$, $r_i=0$ for $i\neq j$, $i\geq 2$). Then
\begin{align*} 
\int_{Q} g_{\e} \eta_1(x_1)\ldots \eta_n(x_n)\d x=&  s(n-2)    \int_{[-1,1]} \Psi_\e'''(x_1)\eta_1(x_1) \d x_1+ s_j  \int_{[-1,1]} \Psi_\e'''(x_1)\eta_1(x_1) \d x_1\\
& +r_j \int_{[-1,1]} P_{\e,j}''(x_1)\eta_1(x_1) \d x_1 + \int_{[-1,1]} H'_\e(x_1)\eta_1(x_1)\d x_1 
\end{align*} 
and thus
\[ 
\| P''_{\e,j}\|_{L^1}\leq C(\|\Psi'''_{\e}\|_{L^1}+\|H'_{\e}\|_{L^1}+\|g_{\e}\|_{L^1})
\]
yielding
\begin{equation}\label{bndPsecond}
    \sup_{\e}\{\| P''_{\e,j}\|_{L^1}\}<+\infty  \  \ \ \text{for all $j\geq 2$}.
\end{equation}
 Analogously considering $\varphi\in L^{\infty}$ and even $\eta_j=\eta_2$ for all $j\geq 2$ we get
 \begin{align*}
     \int_{Q} (u_{\e} \cdot e_1)\varphi(x_1)\eta_2(x_2)\ldots\eta_n(x_n)=&\int_{[-1,1]} (H_{\e}(x_1)-\Psi_{\e}(x_1))\varphi(x_1)\d x_1 \\
    & + s(n-1) \int_{[-1,1]}\Psi''_{\e}(x_1) \varphi(x_1)\d x_1
 \end{align*}
meaning
\[
\sup_{\e>0}\{ \|H_{\e}-\Psi_{\e}+s(n-1)\Psi''_{\e}\|_{L^1}\}\leq C(s).
\]
By selecting two different values for $s$ we can decouple the uniform bound as  
\begin{equation}\label{bndHminusPsiPsiSecondo}
\sup_{\e>0}\{ \|H_{\e}-\Psi_{\e}\|_{L^1}\}<+\infty, \ \ \  \sup_{\e>0}\{\|\Psi''_{\e}\|_{L^1}\}<+\infty.
\end{equation}
By combining \eqref{bndHminusPsiPsiSecondo}, \eqref{bndHPsiThird} and \eqref{bndpsiprime} we get that
\[
H_\e- \Psi_{\e} \rightarrow F, \ \ \Psi''_\e \rightarrow G' \ \ \text{in $L^1$}
\]  
for some $F \in \BV([-1,1])$, $G\in W^{1,1}([-1,1])$, $G'\in \BV([-1,1])$. By combining \eqref{bndPprime} and \eqref{bndPsecond} we get also that
\[
P'_{\e,j}\rightarrow P_j' \ \ \text{in $L^1$}
\]
with $P_j'\in \BV([-1,1])$ and the final limit $u$ has the claimed structure.\\

 \smallskip
 \textbf{Step two:} \textit{the case $n=3$}. The main issue here is in deducing the convergence of $\varrho_{\e}$ and then everything follows exactly as in Step one. By choosing $\eta_2=\eta_3$ even, integrating $g_{\e} \eta_{1}\eta_2\eta_3$ over $Q$ we can obtain again \eqref{bndHPsiThird} since the polynomial part cancels out. By now considering $\varphi\in L^{\infty}$ and only $\eta_2$ to be even (and still integrating $g_{\e} \eta_{1}\eta_2\eta_3$ over $Q$) we conclude that
 \[
\left|\int_{[-1,1]} (P''_{\e,3}(x_1)+ \varrho_{\e} (s_2-s_3))\varphi(x_1)\d x_1 \right|\leq C(s_2,s_3)( \|H'_{\e}\|_{L^1}+ \|\Psi'''_{\e}\|_{L^1}+\|g\|_{L^1}).
 \]
By selecting specific $s_2,s_3$ we can again decouple the bound as
\[
\sup_{\e}\{\|P''_{\e,3}\|_{L^1}\}\leq C, \ \ \sup_{\e}\{\varrho_{\e}\}\leq C.
\]
In particular This implies that $\varrho_{\e}\rightarrow \varrho$ and thus $\varrho_{\e}Q \rightarrow \varrho Q$. Now, by considering $w_{\e}:= u_{\e}-\varrho_{\e} Q$ and by applying the exact arguments of Step one to $w_{\e}$ we conclude that $u - \varrho Q$ has the claimed structure. The proof is complete.
 \end{proof} 

\begin{proof}[Proof of Theorem \ref{MainTheoremINTRO}]
    It comes as a consequence of Proposition \ref{prop:Annihilator}, Theorem \ref{thm:MainRigidityNonParallel} and Theorem \ref{thm:MainRigidityParallel}.
\end{proof}
\section{Proof of Kernel projection Theorem \ref{MainTheoremINTRO2}}\label{sct:KernelProjection}
We now provide a proof of Theorem \ref{MainTheoremINTRO2}, yielding an explicit map $\mathcal{R}_K$ which is particularly useful in homogenization and integral representation problems. \\

For any given convex set $K$ we recall the intrinsic quantity defined in Theorem \ref{MainTheoremINTRO2}:
	\[
	\tau_K:=\frac{1}{P(K)}\int_{\partial K} \left(\frac{\left|y-\mathrm{bar}(K)\right|^2}{2}\Id- (y-\mathrm{bar}(K))\otimes (y-\mathrm{bar}(K))\right)\d\H^{n-1}(y).
	\]
	where $P(K)$ is the distributional perimeter of $K$ (see \cite{Maggi}), 
	\[
	\mathrm{bar}(K):=\frac{1}{|K|}\int_K y\d y.
	\]
and we denote by $\nu_K$ the outern unit normal (defined $\H^{n-1}$-a.e. on $\partial K$). For such sets the distributional tangential derivative of $\nu_K$ is a matrix-valued Radon measure \cite[Theorem 2, Section 6.3]{evans2018measure}. We will denote it as $D^{\mathrm{Tan}}\nu_K\in \mathcal{M}(\R^n;\mathbb{M}^{n\times n})$, we recall that it is supported on $\partial K$ and
\begin{equation} \label{eqn:MeanCurv2}
	\int_{\partial K} F(y)^t \d D^{\mathrm{Tan}}\nu_K (y):=-\int_{\partial K}\nu_K(y)^t (\nabla^{\mathrm{Tan}} F)(y)\\d\H^{n-1}(y)\ \ \ \text{for all $F\in C^{\infty}(\R^n;\R^n)$}
	\end{equation}
with $z^t$ denoting the transpose of $of z\in \R^n$ and where we have defined
	\[
	\nabla^{\mathrm{Tan}} F(y):= \nabla F(y) - \nabla F(y) \nu_K(y) \otimes \nu_K(y).
	\]
	\begin{remark}\label{rmk:CurvBalls}
If $K$ has $C^2$ boundary the $D^{\mathrm{tan}}\nu_K$ can be explicitize in terms of $K$ as follows: let $H_K(y)$ be the scalar mean curvature of $\partial K$ at $y$ and define the vector $\mathbf{H}_K(y):=(n-1)H_K(y)\nu_K(y)$. Then we recall that (see \cite[Theorem 11.8]{Maggi}) for such regular sets we have the Gauss-Green formula on surfaces  
\[
\int_{\partial K}\nabla^{\mathrm{Tan}}\left( v\cdot \nu_K\right)d\H^{n-1}=\int_{\partial K}\HH_K v\cdot \nu_Kd\H^{n-1}
\]
in force for all $v\in C^{\infty}(\R^n;\R^n)$.
In particular since
\[
\nabla^{\mathrm{Tan}}\left( v\cdot \nu_K\right)= v^t \nabla^{\mathrm{Tan}}\nu_K+\nu_K^t\nabla^{\mathrm{Tan}}v
\]
we have
	\begin{equation}\label{eqn:MeanCurv}
	\int_{\partial K} \nu_K^t\left(\nabla^{\mathrm{Tan}}v\right)
	\d\H^{n-1}=\int_{\partial K} \HH_K v\cdot\nu_K-  v^t(\nabla^{\mathrm{Tan}}\nu_K)\d\H^{n-1}.
	\end{equation} 
Thus
\[
D^{\mathrm{Tan}} \nu_K=\left(\nabla^{\mathrm{Tan}}\nu_K-\mathbf{H}_K \otimes  \nu_K\right)\H^{n-1}_{\restr \partial K}
\]
 According to our convention, the mean curvature $H_K$ is always positive for convex sets $K$.  As a title of example we recall that, for $K=B_{r}$ we know that $H_{B_r}=\frac{1}{r}$. 
	\end{remark} 
\begin{remark}
 If $K$ has only piece-wise $C^2$ boundary then $D^{\mathrm{Tan}}\nu_K$ might contains parts orthogonal to  $\H^{n-1}_{\restr \partial K}$. Consider, as a title of example, $K=[-1,1]^2$ in $\R^2$. Then, still as a consequence of the Gauss-Green formula on surfaces (with the boundary terms accounted with the conormal as in \cite[Theorem 11.8]{Maggi}):
 \[
 D^{\mathrm{Tan}}\nu_K=\left[-(e_1+e_2)\H^0_{\restr(1,1)}-(e_1-e_2)\H^0_{\restr (1,-1)} +(e_1+e_2)\H^0_{\restr (-1,-1)}-(-e_1+e_2)\H^0_{\restr (-1,1)}\right]\otimes \nu_K
 \]
 which is nothing but the mean curvature measure (supported only on the vertexes of $Q$) tensorized with $\nu_K$.
\end{remark}
Given these intrinsic quantities, now for $u\in \BDD(\Omega)$, $x\in \R^n$ and for $K$ a convex set we recall the Definition of the relevant quantities
    \begin{align}
     \mathrm{s}_{K}[u]&:=-\frac{1}{(n-1)|K|}\int_{\partial K} u^t \d  D ^{\mathrm{Tan}} \nu_K  (y)\label{eqn:s}\\
    \mathrm{A}_{K}[u]&:=\frac{1}{2|K|}\int_{\partial K}(u\otimes \nu_K-\nu_K\otimes u)\d\H^{n-1}(x)\label{eqn:A}\\
    \gamma_{K}[u]&:=\frac{1}{n|K|}\int_{\partial K}(u\cdot \nu_K)\d\H^{n-1}(y)   \label{eqn:T}\\
      \mathrm{b}_{K}[u]&:=\frac{1}{P(K)}\int_{\partial K}u\d\H^{n-1}(y)+\tau_K \mathrm{s}_K[u],\label{eqn:d}
    \end{align}
and we set
    \begin{equation}\label{eqn:linopeker}
    \begin{split}
    \mathcal{R}_{K}[u](y):=&\left(\mathrm{A}_{K}[u]+ \gamma_{K}[u] \Id\right) (y-\mathrm{bar}(K))\\ &+(\mathrm{s}_{K}[u] \cdot (y-\mathrm{bar}(K)))(y-\mathrm{bar}(K)) -\mathrm{s}_{K}[u]\frac{|y-\mathrm{bar}(K)|^2}{2}+ \mathrm{b}_{K}[u]
    \end{split}
   \end{equation}
This correction with $\mathrm{bar}(K)$ makes the quantity $\mathcal{R}_K$ depending only on the shape and the size of $K$ and not on the position. We now prove the Theorem \ref{MainTheoremINTRO2}.
\begin{proof}[Proof of Theorem \ref{MainTheoremINTRO2}]
The Poincarè inequality \eqref{eqn:PoincaréInequality} comes from \ref{prop:poincare} as soon as we show that $\mathcal{R}_K$ is linear, bounded and fixes $\mathrm{Ker}(\E_d)$ (see \cite{CVG2025,breit2017traces}). Thus we focus on the first part of the statement.\\

Clearly $\mathcal{R}_{K}$ is linear. Since the convergence in $\BDD$ implies the convergence of the traces, and since the elements identifying $\mathcal{R}_{K}$ are defined on the traces, the continuity (and thus the boundedness) follows at once. We focus on showing that $\mathcal{R}_{K}$  fixes the elements of $\mathrm{\E_d}$. For $L\in \mathrm{\E_d}$ we have
    \[
    L(y)=(A+\gamma \Id)y+(s\cdot y)y-s\frac{|y|^2}{2}+  b
    \]
for some $R\in  \mathbb{M}^{n\times n}_{skew},\gamma\in \R$,$s$, $b\in \R^n$. Let us split
    \[
    L(y):=M(y)+B(y), \ \ \  M(y):=(A+\gamma\Id)y+ b, \  \ B(y):= (s \cdot y)y-s\frac{|y|^2}{2}.  
    \]
It is straightforward to see that, for $u\in BV(\Omega)$ it holds
    \begin{align}
    \Aa_{K}[u]&=\frac{1}{2|K|}(Du(K)-Du^t(K)),\label{eqn:Aa}\\
    \gamma_{K}[u]&=\frac{1}{n|K|} \trace(Du(K))\label{eqn:gamma}
    \end{align}
   implying
   	\[
   	\Aa_K[M]=A, \ \gamma_K[M]= \gamma.
   	\]
Due to the very definition of $\ss_K$ we also have, for regular maps $v\in C^1(\R^n;\R^n)$ that
	\[
	\ss_K[v]=-\frac{1}{(n-1)|K|}\int_{\partial K} \nu_K(y)^t \nabla^{\mathrm{Tan}}v(y)\d \H^{n-1}(y).
	\]
In particular we have 	
\[
\nu_K^t\nabla^{\mathrm{Tan}}M(y)=(\nu_K^tA+\gamma\nu_K)-(\nu_K^tA\nu_K)   \nu_K(y)-\gamma\nu_K(y)=\nu_K^tA-(\nu_K^tA\nu_K)   \nu_K(y).
\] 
Since $K$ is center-symmetric, this facts immediately implies
	\[
	\ss_K[M]=0, \ \ \ \bb_K[M]=b.
	\]
Moreover, since 
	\[
	DB=(s\cdot y)\Id +y\otimes s-s \otimes  y,
	\]
then, again by center-symmetry, $DB(K)=0$, and thence we have (recalling \eqref{eqn:Aa}, \eqref{eqn:gamma}):
	\begin{align*}
	\Aa_K[B]=0, \ \ \ \gamma_K[B]=0.
	\end{align*}
Furthermore (setting to alleviate the notation $\nu=\nu_K$)
	\begin{align*}
 \nabla^{\mathrm{Tan}}B=(s\cdot y)\Id +y\otimes s  -s \otimes  y- \left[(s\cdot y)(\nu\otimes \nu) + (s\cdot \nu) y\otimes \nu -  (y\cdot \nu)s\otimes \nu \right]
	\end{align*}
and thus
	\begin{align*}
 \nu^t\nabla^{\mathrm{Tan}}B&=(s\cdot y)\nu +(\nu\cdot y)  s -(\nu\cdot s)  y- \left[(s\cdot y)   \nu+ (s\cdot \nu) (\nu\cdot y) \nu -  (y\cdot \nu)(\nu\cdot s)  \nu \right]\\
 &= (\nu\cdot y)  s -(\nu\cdot s)  y.
	\end{align*}
Therefore
	\begin{align*}
	\ss_K[B]= \frac{1}{(n-1)|K|} \int_{\partial K} \nu^t\nabla^{\mathrm{Tan}}B \d\H^{n-1}(y)=\frac{1}{(n-1)|K|}\int_{\partial K}  ((\nu\cdot y) s -(\nu\cdot s)  y)\d \H^{n-1}(y).
	\end{align*}
Since, clearly
		\begin{align*}
	 \int_{\partial K}s(y\cdot \nu)\d y&= \sum_{i=1}^n e_i \int_{K} \dive(s_i y)\d y=n|K|s\\
	 	 \int_{\partial K}y(s\cdot \nu)\d y&= \sum_{i=1}^n e_i \int_{K} \dive(y_i s)\d y= s |K| 
		\end{align*}
	then 
    \[
    \ss_K[B]=\frac{1}{(n-1)|K|}\left(n|K|s - s|K|\right)s =s.
    \]
Finally, this also shows that
		\begin{align*}
		\bb_K[B]=-\tau_K  s+\tau_K  \ss_K[B]=0.
\end{align*}		
Summarizing
	\begin{align*}
\ss_K[L]&=\ss_K[B]+\ss_K[M]=s,\\
	\Aa_K[L]&=\Aa_K[B]+\Aa_K[M]=A,\\
	\gamma_K[L]&=\gamma_K[B]+\gamma_K[M]=\gamma,\\
	\bb_K[L]&=\bb_K[B]+\bb_K[M]=b.
	\end{align*}
yielding $\mathcal{R}_K[L]=L$.  
\end{proof}

Before concluding the Section we state a Lemma which turns out to be quite useful in the blow-up theory. We recall the notion of \textit{pushforward measure} (see also \cite{bogachev2007measure} for additional details). Let $\mu\in \mathcal{M}(A_1;\mathbb{M}^{n\times n})$ and $f:A_1\rightarrow A_2$ be a $\mu$-measurable function. The pushforward measure $f_{\#}\mu\in  \mathcal{M}(A_2;\mathbb{M}^{n\times n}) $  is defined as
\[
f_{\#}\mu(B):=\mu(f^{-1}(B)) \ \ \text{for all $\mu$-measurable $B\subset A_2$}.
\]
We recall that the following integral formula holds
\begin{equation}\label{eqn:pushforwardIntegralFormula}
\int_{B} g(x) \d (f_{\#} \mu)(x)=\int_{f^{-1}(B)} g(f(x)) \d \mu(x).
\end{equation}

\begin{lemma}\label{lem:ChangeOfScale}
Let $K$ be a center symmetric convex body. If $u\in \BDD(K_{\varrho}(x))$ then  $v_{\varrho}(y):= \frac{u(x+\varrho y)}{\varrho}\in \BDD(K)$ and
	\begin{align*}
	|\E_d v_{\varrho}|(K)&=\varrho^{-n}|\E_d u|(K_{\varrho}(x)),\\
	\mathcal{R}_{K} [v_{\varrho}](y)&=\frac{\mathcal{R}_{K_{\varrho}(x)} [u](x+\varrho y)}{\varrho}
	\end{align*}
\end{lemma}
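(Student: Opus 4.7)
The plan is to unfold the affine change of variable $z = x + \varrho y$ (equivalently $T:K_\varrho(x)\to K$, $T(z) = (z-x)/\varrho$) at two separate levels: first on the measure $\E_d v_\varrho$, to produce the first identity; then on each of the four boundary functionals $\Aa_K,\gamma_K,\ss_K,\bb_K$ appearing in the definition \eqref{eqn:linopeker} of $\mathcal{R}_K$, to produce the second identity.

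\textbf{Step 1 (the measure identity).} For an arbitrary $\varphi \in C_c^{\infty}(K;\mathbb{M}_{\mathrm{sym}_0}^{n\times n})$, I would test $\E_d v_\varrho$ via the distributional definition $\int_K\varphi\cdot \d\E_d v_\varrho = -\int_K \dive(\varphi)\cdot v_\varrho\, \d y$ and substitute $z = x+\varrho y$. Setting $\tilde\varphi(z) := \varphi((z-x)/\varrho)$ one has $\dive\varphi((z-x)/\varrho) = \varrho\,\dive\tilde\varphi(z)$ and $\d y = \varrho^{-n}\d z$, so the right-hand side collapses to $\varrho^{-n}\int_{K_\varrho(x)}\tilde\varphi\cdot \d\E_d u$. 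This identifies $\E_d v_\varrho = \varrho^{-n}T_{\#}(\E_d u)$, and the total-variation identity $|\E_d v_\varrho|(K) = \varrho^{-n}|\E_d u|(K_\varrho(x))$ follows from \eqref{eqn:pushforwardIntegralFormula}; in particular $v_\varrho\in\BDD(K)$.

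\textbf{Step 2 (scaling of the boundary functionals).} I would record the elementary scalings $|K_\varrho(x)| = \varrho^n|K|$, $P(K_\varrho(x)) = \varrho^{n-1}P(K)$, $\mathrm{bar}(K_\varrho(x)) = x + \varrho\,\mathrm{bar}(K)$, $\nu_K(y) = \nu_{K_\varrho(x)}(x+\varrho y)$, $\d\H^{n-1}(z) = \varrho^{n-1}\d\H^{n-1}(y)$, and, by a direct change of variable in the defining formula of $\tau_K$, $\tau_{K_\varrho(x)} = \varrho^2 \tau_K$. Substituting these into \eqref{eqn:s}--\eqref{eqn:d}, the factor $1/\varrho$ in $v_\varrho$ combines with the surface-measure factor $\varrho^{n-1}$ and the $1/|K|$ normalisation to give $\Aa_K[v_\varrho] = \Aa_{K_\varrho(x)}[u]$ and $\gamma_K[v_\varrho] = \gamma_{K_\varrho(x)}[u]$. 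For $\ss_K$, I would use \eqref{eqn:MeanCurv2} to rewrite it as a boundary integral of $\nu_K^t\nabla^{\mathrm{Tan}}v_\varrho$; the chain rule gives $\nabla v_\varrho(y) = (\nabla u)(x+\varrho y)$ (the prefactor $1/\varrho$ cancels the derivative factor $\varrho$), hence $\nabla^{\mathrm{Tan}}v_\varrho(y) = (\nabla^{\mathrm{Tan}}u)(x+\varrho y)$ and $\ss_K[v_\varrho] = \varrho\,\ss_{K_\varrho(x)}[u]$. The formula $\bb_K[v_\varrho] = \varrho^{-1}\bb_{K_\varrho(x)}[u]$ then follows by combining the surface-average scaling $\varrho^{-n}/P(K) = \varrho^{-1}/P(K_\varrho(x))$ with the relation $\tau_K\ss_K[v_\varrho] = \varrho^{-1}\tau_{K_\varrho(x)}\ss_{K_\varrho(x)}[u]$ (obtained from $\tau_{K_\varrho(x)} = \varrho^2\tau_K$ and the previous scaling of $\ss_K$).

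\textbf{Step 3 (assembly).} I would plug the four scalings into \eqref{eqn:linopeker} and use $y - \mathrm{bar}(K) = (z - \mathrm{bar}(K_\varrho(x)))/\varrho$ with $z = x+\varrho y$. Each of the four summands then factors out a common $\varrho^{-1}$: for instance $(\ss_K[v_\varrho]\cdot(y-\mathrm{bar}(K)))(y-\mathrm{bar}(K)) = \varrho^{-1}(\ss_{K_\varrho(x)}[u]\cdot(z-\mathrm{bar}(K_\varrho(x))))(z-\mathrm{bar}(K_\varrho(x)))$. The resulting expression is exactly $\varrho^{-1}\mathcal{R}_{K_\varrho(x)}[u](x+\varrho y)$, as claimed. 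The only non-mechanical step is the scaling of $\ss_K$, which is formulated against the distributional tangential derivative $D^{\mathrm{Tan}}\nu_K$: passing from the smooth computation above to a general $u\in \BDD$ requires a mollification argument together with continuity of traces, guaranteed by the $\C$-ellipticity of $\E_d$ for $n\geq 3$. Apart from this point, the argument is a bookkeeping of powers of $\varrho$.
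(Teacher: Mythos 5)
Your argument is correct and reaches the same conclusions, but it differs from the paper's proof at two points, with complementary trade-offs. For the total-variation identity, the paper first computes $\E_d v_\varrho(y)=(\E_d u)(x+\varrho y)$ for smooth $u$ and then passes to general $\BDD$ via strict approximation (quoting \cite[Theorem 2.8]{breit2017traces}); your Step 1 instead works directly on the distributional level, identifying $\E_d v_\varrho = \varrho^{-n}T_{\#}(\E_d u)$ by substitution in the defining duality with test fields, which is cleaner and requires no approximation. For the scaling of $\ss_K$, the trade goes the other way: the paper establishes the purely geometric identity $g^{x,\varrho}_{\#} D^{\mathrm{Tan}}\nu_{K}=\varrho^{2-n}D ^{\mathrm{Tan}} \nu_{K_{\varrho}(x)}$ by testing against smooth $F$, after which $\ss_K[v_\varrho]=\varrho\,\ss_{K_\varrho(x)}[u]$ follows for \emph{every} $u\in\BDD$ by simply integrating the trace against the pushed-forward measure — no regularisation of $u$ is needed. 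You instead pass to the $\nu_K^t\nabla^{\mathrm{Tan}}v_\varrho$ representation of $\ss_K$, do the computation for smooth $u$, and then appeal to a mollification/trace-continuity argument; this is valid (and you correctly flag it as the non-mechanical step), but it re-introduces the approximation machinery precisely where the paper's pushforward trick avoids it. If you want to tighten that step, establish the identity for the measure $D^{\mathrm{Tan}}\nu_K$ directly and then plug in the trace of $u$, rather than regularising $u$. The scalings you record ($|K_\varrho(x)|=\varrho^n|K|$, $P(K_\varrho(x))=\varrho^{n-1}P(K)$, $\tau_{K_\varrho(x)}=\varrho^2\tau_K$, $\mathrm{bar}(K_\varrho(x))=x+\varrho\,\mathrm{bar}(K)$, $\nu_K(y)=\nu_{K_\varrho(x)}(x+\varrho y)$) and the final assembly with $y-\mathrm{bar}(K)=(z-\mathrm{bar}(K_\varrho(x)))/\varrho$ all check out.
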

\begin{proof}
We can suppose without loss of generality that $\mathrm{bar}(K)=0$. If $u\in C^{\infty}(K_{\varrho}(x))$ we have
	\[
	\E_d v_{\varrho}(y)=\E_d \left( \frac{u(x+\varrho y)}{\varrho}\right)= (\E_d u)(x+\varrho y)
	\]
and thence
	\begin{align*}
	|\E_d v_{\varrho}|(K)&= \int_{K}| (\E_d u)(x+\varrho y)|\d y\\
	&=\varrho^{-n} \int_{K_{\varrho}(x)}|\E_d u(z)|\d z=\varrho^{-n}| \E_d u|(K_{\varrho}(x)).
	\end{align*}
By approximation in the strict convergence (see \cite[Theorem 2.8]{breit2017traces}) we now pass to the whole $\BDD$ . We now check the scaling properties of $\mathcal{R}_K$. We consider
	\begin{align}
	\mathrm{A}_K [v_{\varrho}]&= \frac{1}{2\varrho |K|}\int_{\partial K} [u(x+\varrho y) \otimes \nu_K - \nu_K \otimes u(x+\varrho y)]\d \H^{n-1}(y)\nonumber\\
	&= \frac{1}{2 |K_{\varrho}(x)|}\int_{\partial K_{\varrho}(x)} \left[u(z) \otimes \nu_K\left(\frac{z-x}{\varrho}\right) - \nu_K\left(\frac{z-x}{\varrho}\right)  \otimes u(z)\right]\d \H^{n-1}(y)\nonumber\\
	&= \frac{1}{2 |K_{\varrho}(x)|}\int_{\partial K_{\varrho}(x)} \left[u(z) \otimes \nu_{K_{\varrho}(x)}(z) - \nu_{K_{\varrho}(x)}(z)  \otimes u(z)\right]\d \H^{n-1}(y)\nonumber\\
	&= \mathrm{A}_{K_{\varrho}(x)}[u]\label{a}
	\end{align}
since
	\[
	\nu_{K_{\varrho}(x)}(z)=\nu_K\left(\frac{z-x}{\varrho}\right) \ \ \ \text{for all $z\in \partial K_{\varrho}(x)$}.
	\]
The exact same computation also yields
	\begin{equation}\label{t}
	\gamma_{K} [v_{\varrho}]= \gamma_{K_{\varrho}(x)}[u].
	\end{equation}
Moreover, setting $g^{x,\varrho}(y):=x+\varrho y$, $g^{x,\varrho}(\partial K)= \partial K_{\varrho}(x)$ we have from \eqref{eqn:pushforwardIntegralFormula}
	\begin{equation}\label{eqn:pushonF}
	\int_{\partial K_{\varrho}(x)} f(z) \d (g_{\#} (D^{\mathrm{Tan}} \nu_{K})(z)=\int_{\partial K } f(g(y)) \d (D^{\mathrm{Tan}} \nu_{K})(y)
	\end{equation}

Therefore
\begin{align}
	{\rm s}_{K}[v_{\varrho}]=&-\frac{1}{(n-1)\varrho |K| }\int_{\partial K} v(x+\varrho y)^t \d (D^{\mathrm{Tan}}\nu_{K})(y)\nonumber\\
	=&-\frac{1}{(n-1)\varrho |K| }\int_{\partial K_{\varrho}(x)} v( z)^t \d (g^{x,\varrho}_{\#} D^{\mathrm{Tan}}\nu_{K})(z)\label{first}
	\end{align}
Since
	\[
\nabla^{\mathrm{Tan}}\left(F(x+\varrho y) \right))=\varrho \left(\nabla^{\mathrm{Tan}}F\right)(x+\varrho y)
	\]
we have
\begin{align*}
	\int_{\partial K} \nu_K^t(y)\left(\nabla^{\mathrm{Tan}}F\right) & \left(x+\varrho y\right)\d \H^{n-1}(y)=	\frac{1}{\varrho} \int_{\partial K} \nu_K^t(y) \nabla^{\mathrm{Tan}}\left(F \left(x+\varrho y\right)\right) \d \H^{n-1}(y)\\
	&=-\frac{1}{\varrho} \int_{\partial K} F\left(x+\varrho y\right)^t \d (D^{\mathrm{Tan}} \nu_{K})(y)=-\frac{1}{\varrho} \int_{\partial K_{\varrho}(x)} F\left(y\right)^t \d (g^{x,\varrho}_{\#} D^{\mathrm{Tan}}\nu_{K})(y).
	\end{align*}
where the last equality follows from \eqref{eqn:pushonF}. Also
	\begin{align*}
	\int_{\partial K} \nu_K(y)^t \left(\nabla^{\mathrm{Tan}}F\right) \left(x+\varrho y\right)\d \H^{n-1}(y)&=\frac{1}{\varrho^{n-1}}\int_{\partial K_{\varrho}(x)} \nu_K\left(\frac{z-x}{\varrho}\right)^t\left(\nabla^{\mathrm{Tan}}F\right) \left(z\right) \d \H^{n-1}(z)	\\
	&=-\frac{1}{\varrho^{n-1}}\int_{\partial K_{\varrho}(x)} F \left(z\right)^t\d( D^{\mathrm{Tan}} \nu_{K_{\varrho}(x)}) (z).
	\end{align*}
in particular we deduce that
\[
 \int_{\partial K_{\varrho}(x)} F \left(z\right)^t\d( D^{\mathrm{Tan}} \nu_{K_{\varrho}(x)}) (z) =  \varrho^{n-2} \int_{\partial K_{\varrho}(x)} F\left(y\right)^t \d (g^{x,\varrho}_{\#} D^{\mathrm{Tan}}\nu_{K})(y) \ \ \ \text{for all $F\in C^{\infty}(\R^n;\R^n)$}.
\]
Thence
$g^{x,\varrho}_{\#} D^{\mathrm{Tan}}\nu_{K}=\varrho^{2-n}D ^{\mathrm{Tan}} \nu_{K_{\varrho}(x)}$. Thus, from \eqref{first}:
	\begin{align}
	\ss_K[v_{\varrho}]&= -\frac{1}{(n-1)\varrho |K| }\int_{\partial K_{\varrho}(x)} v( z)^t \d (g^{x,\varrho}_{\#} D^{\mathrm{Tan}}\nu_{K})(z)\nonumber\\
    &=-\frac{\varrho}{(n-1)|K_{\varrho}(x)|}\int_{\partial K_{\varrho}(x)} v(z)^t\d(D^{\mathrm{Tan}} \nu_{K_{\varrho}(x)})(z)=\varrho \ss_{K_{\varrho}(x)}[v] \label{s}.
	\end{align}
Finally we note that
	\begin{align*}
	\tau_{K_{\varrho}(x)}&=\frac{\varrho^2}{P(K)}\int_{\partial K} \left(\frac{|y|^2}{2}\Id- y\otimes y\right)\d\H^{n-1}(y)=\varrho^2\tau_K,
	\end{align*}
and
\[
	\frac{1}{P(K)}\int_{\partial K} \frac{ u(x+\varrho y) }{\varrho}\d\H^{n-1}(y)=\frac{1}{\varrho P(K_{\varrho}(x))}\int_{\partial K_{\varrho}(x)}  u(z) \d\H^{n-1}(z).
	\]
Thus
	\begin{align}
	\bb_K[v_{\varrho}]&=\frac{1}{\varrho P(K_{\varrho}(x))}\int_{\partial K_{\varrho}(x)}  u(z) \d\H^{n-1}(z)+\frac{\tau_{K_{\varrho}(x)}}{\varrho^2} \ss_K[v_{\varrho}]\nonumber\\
	&=\frac{1}{\varrho P(K_{\varrho}(x))}\int_{\partial K_{\varrho}(x)}  u(z) \d\H^{n-1}(z)+\frac{\tau_{K_{\varrho}(x)}}{\varrho} \ss_{K_{\varrho}(x)}[u]\nonumber\\
	&=\frac{1}{\varrho} \bb_{K_{\varrho}(x)}[u].\label{d}
	\end{align}
In particular, by collecting \eqref{a},\eqref{t},\eqref{s}, \eqref{d} and the very definition of $\mathcal{R}_K,\mathcal{R}_{K_{\varrho}(x)}$ we conclude
	\begin{align*}
	\mathcal{R}_K[v_{\varrho}](y)= \frac{\mathcal{R}_{K_{\varrho}(x)}[u](x+\varrho y)}{\varrho}.
	\end{align*}
\end{proof}

\section{Vanishing properties of the projection}\label{sct:Vanishing}
The final Proposition of this Section states a key property, usually required in the application, for the map $\mathcal{R}_K$. \begin{proposition}\label{propo:HigerOrderDies}
Let $K$ be a center-symmetric convex body and let $u\in \BDD(\Omega)$. Then, for any $x\notin \Theta_u$ (as in Definition \eqref{eqn:ThetaU}) it holds
	\begin{align}\label{eqn:decaysOnK}
	\lim_{\varrho\rightarrow 0} \varrho^2 | \ss_{K_{\varrho}(x)}[u]|&=\lim_{\varrho\rightarrow 0} \varrho |\Aa_{K_{\varrho}(x)}[u]|=\lim_{\varrho\rightarrow 0} \varrho |\gamma_{K_{\varrho}(x)}[u]|=0.
	\end{align}
and
	\[
	\lim_{\varrho\rightarrow 0} \fint_{K_{\varrho}(x)}|u(y)-\bb_{K_{\varrho}(x)}[u]|\d y =0.
	\]
\end{proposition}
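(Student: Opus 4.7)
The plan is to combine a scaling reduction with the non-local representations of the components of $\mathcal{R}_K$ developed in the appendix (in the spirit of Kohn \cite{kohn1980new}).

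First, I would rescale via Lemma \ref{lem:ChangeOfScale}: setting $v_\varrho(y) := u(x+\varrho y)/\varrho$, the four scaling identities of that lemma turn \eqref{eqn:decaysOnK} and the last limit into
\[
\varrho|\mathrm{A}_K[v_\varrho]|,\ \varrho|\gamma_K[v_\varrho]|,\ \varrho|\mathrm{s}_K[v_\varrho]| \to 0, \qquad \varrho\fint_K |v_\varrho - \mathrm{b}_K[v_\varrho]|\,dy \to 0,
\]
while the hypothesis $x\notin\Theta_u$ translates into $\varrho|\E_d v_\varrho|(K) = |\E_d u|(K_\varrho(x))/\varrho^{n-1} \to 0$.

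Next, I would reduce the fourth (averaged) limit to the first three. Splitting
\[
v_\varrho - \mathrm{b}_K[v_\varrho] = (v_\varrho - \mathcal{R}_K[v_\varrho]) + (\mathcal{R}_K[v_\varrho] - \mathrm{b}_K[v_\varrho]),
\]
the Poincar\'e inequality (Proposition \ref{prop:poincare}) yields $\|v_\varrho - \mathcal{R}_K[v_\varrho]\|_{L^1(K)} \leq c\diam(K)|\E_d v_\varrho|(K)$, so its contribution to $\varrho\fint_K$ vanishes; the remainder $\mathcal{R}_K[v_\varrho] - \mathrm{b}_K[v_\varrho]$ is an explicit polynomial in $y-\mathrm{bar}(K)$ whose sup-norm on $K$ is controlled by a purely geometric constant times $|\mathrm{A}_K[v_\varrho]| + |\gamma_K[v_\varrho]| + |\mathrm{s}_K[v_\varrho]|$, so multiplying by $\varrho$ and invoking the three basic decays closes the reduction.

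The heart of the argument is those three basic decays. I would invoke the non-local representations from the appendix: exploiting the $\C$-ellipticity of $\E_d$ for $n\geq 3$ (Remark \ref{rmk:EdNotCell}) and a Korn-type identity which recovers the skew part $Wu$ from derivatives of $\E_d u$ on index combinations where the uncontrolled $\dive(u)$-terms cancel, each of $\mathrm{A}_K[u], \gamma_K[u], \mathrm{s}_K[u]$ is expressed as a bounded-kernel interior integral against $\E_d u$. Combined with the correct scaling, this produces an estimate of the form
\[
\varrho|\mathrm{A}_{K_\varrho(x)}[u]| + \varrho|\gamma_{K_\varrho(x)}[u]| + \varrho^2|\mathrm{s}_{K_\varrho(x)}[u]| \leq C(K)\, \frac{|\E_d u|(K_\varrho(x))}{\varrho^{n-1}},
\]
and the assumption $x\notin\Theta_u$ drives the right-hand side to zero. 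The main obstacle is precisely this non-local step: in BDD, in contrast with BV where $\mathrm{A}_K[u]$ is an average of the distributional gradient and the bound is immediate, neither $Du$ nor $\E u$ is controlled by $\E_d u$, so one has to set up the Korn identities to circumvent the uncontrollable $\dive(u)$-terms, which is possible exactly because $n\geq 3$.
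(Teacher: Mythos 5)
Your rescaling step via Lemma~\ref{lem:ChangeOfScale} and the reduction of the averaged limit to the three coefficient decays via the Poincar\'e inequality are both correct and consistent with the paper's strategy. However, the heart of your argument — the three basic decays for a general convex body $K$ — contains two genuine gaps.

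First, you claim that the non-local representations from the appendix express $\mathrm{A}_K[u]$, $\gamma_K[u]$, $\mathrm{s}_K[u]$ for a \emph{general} $K$ as bounded-kernel interior integrals against $\E_d u$. Proposition~\ref{prop:NonLocalRepr} is proved only for \emph{balls}: the test functions $\psi_{ij}$ are built to satisfy $\E_d^*\psi_{ij}=0$ with a zero-homogeneous kernel whose level sets are spheres, and the boundary term in the Gauss--Green formula~\eqref{eqn:GGformula} reorganizes into $\Aa_{\varrho,x}$, $\gamma_{\varrho,x}$, $\ss_{\varrho,x}$ precisely because $\nu_K=\frac{y-x}{|y-x|}$ on $\partial B_\varrho(x)$. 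For a general center-symmetric convex body neither the $\E_d^*$-free kernel nor the boundary identification goes through verbatim, and the paper does not attempt to extend them.

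Second, even in the ball case the resulting bound is \emph{not} pointwise of the form $\varrho|\mathrm{A}_{K_\varrho(x)}[u]|+\varrho|\gamma_{K_\varrho(x)}[u]|+\varrho^2|\mathrm{s}_{K_\varrho(x)}[u]|\le C(K)\,|\E_d u|(K_\varrho(x))/\varrho^{n-1}$. The representation produces an integral over the \emph{annulus} $B_\tau\setminus B_\varrho$ against the singular kernel $|y-x|^{-n}$, plus a boundary term $\Aa_{\tau,x}[u]$ at a fixed intermediate scale $\tau$. The kernel integral is not controlled by the local density at scale $\varrho$; it is controlled only after the layer-cake estimate~\eqref{zerokill}, which yields $\varrho\int_{B_\tau\setminus B_\varrho}|y|^{-n}\,d|\E_d u|\le n\sup_{t\le\tau}|\E_d u|(B_t)/t^{n-1}$, and the argument requires a double limit (first $\varrho\to 0$, then $\tau\to 0$). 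In fact a pointwise estimate of the shape you write cannot hold: if $|\E_d u|(B_s(x))\sim s^{n-1}/\log(1/s)$ then $x\notin\Theta_u$, yet the kernel integral $\int_{B_\tau(x)}|y-x|^{-n}\,d|\E_d u|$ diverges.

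What the paper actually does is a two-stage argument: it first proves the ball case (Proposition~\ref{propo:HigerOrderDiesOnBall}) via the non-local representation and the layer-cake bound, and then transfers to general $K$ by a comparison trick. One defines a hybrid operator $\bar{\mathcal{R}}_{\varrho,x}$ obtained from $\mathcal{R}_{B_\varrho(x)}$ by replacing a single coefficient (say $\ss_{B_\varrho(x)}$) by its $K$-version; this hybrid still fixes $\mathrm{Ker}(\E_d)$ and scales correctly, so a Poincar\'e inequality holds for it, and an explicit lower bound of the type $\int_{B_\varrho}|(v\cdot(y-x))(y-x)-\tfrac{|y-x|^2}{2}v|\,dy\ge C(n)\varrho^{n+2}|v|$ isolates $\varrho^2|\ss_{K_\varrho(x)}[u]|$ against the already-proved ball decays. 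This transfer mechanism is the missing idea in your proposal.
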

In particular the above slightly extend a result for quasi-continuous points for  $\BDD$ maps, proved firstly in \cite{arroyo2019fine} in the general context of elliptic operators. We recall that a map $u$ is said to be  \textit{approximately quasi-continuous} at $x_0\in \Omega$ if
	\[
	\lim_{\e\rightarrow 0} \min_{b\in \R^n}\left\{ \fint_{K_{\varrho}(x_0)} |u(y)-b|\d y\right\}=0.
	\]
With the following Proposition we can ensure that $\BDD$ maps are approximately quasi-continuous at $\H^{n-1}$-a.e. $x\in \Omega$.\\

Let us underline that this notion of continuity is weaker than the notion of approximate continuity, for which fewer things are known. To obtain this fact we will exploit the specific case $K=B$, in particular the following Proposition.
\begin{proposition}\label{propo:HigerOrderDiesOnBall}
Let $\varrho>0$, $u\in \BDD(\Omega)$. Then for all $x\notin \Theta_u$  (as in Definition \eqref{eqn:ThetaU}) it holds
	\begin{align}\label{eqn:decaysOnBall}
	\lim_{\varrho\rightarrow 0} \varrho^2 |\ss_{B_{\varrho}(x)}[u]|&=\lim_{\varrho\rightarrow 0} \varrho |\Aa_{B_{\varrho}(x)}[u]|=\lim_{\varrho\rightarrow 0} \varrho |\gamma_{B_{\varrho}(x)}[u]|=0
	\end{align}
	and
	\[
	\lim_{\varrho\rightarrow 0} \fint_{B_{\varrho}(x)}|u(y)-\bb_{B_{\varrho}(x)}[u]|\d y =0.
	\]
In particular $u$ is approximately quasi-continuous at $\H^{n-1}$-a.e. point $x\in \Omega$.
\end{proposition}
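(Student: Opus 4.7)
The plan is to reduce the decays \eqref{eqn:decaysOnBall} to two ingredients: scale-invariant bounds on the three functionals $\mathrm{A}_K, \gamma_K, \mathrm{s}_K$ by $|\E_d u|$, and the Poincaré inequality \eqref{eqn:PoincaréInequality} to handle $\mathrm{b}_K$. First I would establish that on the unit ball $K = B$,
\[
|\mathrm{A}_B[v]| + |\gamma_B[v]| + |\mathrm{s}_B[v]| \leq C\,|\E_d v|(\overline{B})
\]
for every $v \in \BDD(B)$, with a constant depending only on $n$. This is not immediate from the boundary formulas \eqref{eqn:A}--\eqref{eqn:T}, which a priori involve the full trace of $v$: the point is that all three functionals annihilate $\mathrm{Ker}(\E_d)$ (cf.\ the proof of Theorem \ref{MainTheoremINTRO2}), so they descend to continuous linear functionals on the quotient $\BDD(B)/\mathrm{Ker}(\E_d)$, on which $v \mapsto |\E_d v|(\overline{B})$ is a norm by Proposition \ref{prop:sobBalls} and the Poincaré inequality. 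Concretely, the Kohn-type non-local representation developed in the appendix rewrites each of $\mathrm{A}_B, \gamma_B, \mathrm{s}_B$ as an interior duality pairing against $\E_d v$ with a bounded kernel, which gives the bound above. Invoking Lemma \ref{lem:ChangeOfScale} upgrades it to
\[
\varrho^{n-1}\big(|\mathrm{A}_{B_{\varrho}(x)}[u]| + |\gamma_{B_{\varrho}(x)}[u]|\big) + \varrho^{n}\,|\mathrm{s}_{B_{\varrho}(x)}[u]| \leq C\,|\E_d u|(B_{\varrho}(x)).
\]

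Since the hypothesis $x \notin \Theta_u$ is precisely $|\E_d u|(B_{\varrho}(x))/\varrho^{n-1} \to 0$, multiplying these three inequalities respectively by $\varrho, \varrho, \varrho^2$ yields \eqref{eqn:decaysOnBall} at once.

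For the mean convergence, I apply \eqref{eqn:PoincaréInequality} on $K = B_{\varrho}(x)$ and divide by $|B_{\varrho}(x)| \sim \varrho^n$ to get
\[
\fint_{B_{\varrho}(x)} |u(y) - \mathcal{R}_{B_{\varrho}(x)}[u](y)|\,dy \leq C\,\frac{|\E_d u|(B_{\varrho}(x))}{\varrho^{n-1}} \xrightarrow[\varrho \to 0]{} 0.
\]
On the other hand, the explicit form \eqref{eqn:linopeker} of $\mathcal{R}_K$, combined with $|y - \mathrm{bar}(B_{\varrho}(x))| \leq \varrho$ on $B_{\varrho}(x)$, gives
\[
\|\mathcal{R}_{B_{\varrho}(x)}[u] - \mathrm{b}_{B_{\varrho}(x)}[u]\|_{L^\infty(B_{\varrho}(x))} \leq \varrho\big(|\mathrm{A}_{B_{\varrho}(x)}[u]| + |\gamma_{B_{\varrho}(x)}[u]|\big) + \varrho^2\,|\mathrm{s}_{B_{\varrho}(x)}[u]|,
\]
which is $o(1)$ by \eqref{eqn:decaysOnBall}. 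The triangle inequality closes the argument. Finally, approximate quasi-continuity follows by choosing $b = \mathrm{b}_{B_{\varrho}(x)}[u]$: by Proposition \ref{prop:finePropGradient} we have $|\E_d u| \ll \H^{n-1}$ and $\Theta_u \subseteq J_u$ up to $\H^{n-1}$-null sets, so the decays apply at $\H^{n-1}$-a.e. $x$ in the relevant complement.

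The main obstacle is the first step: producing a bound in terms of $|\E_d v|(\overline{B})$ alone, with no dependence on $\|v\|_{L^1(B)}$. This requires the non-local representation from the appendix, which is a careful implementation of Kohn's 1980 duality technique adapted to the deviatoric operator $\E_d$; once the representation is in place, the rest of the proof is a routine combination of scaling, the Poincaré inequality, and the explicit structure \eqref{eqn:linopeker} of $\mathcal{R}_K$.
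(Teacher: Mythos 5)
There is a genuine gap in the first and central step. You claim that $\mathrm{A}_B$, $\gamma_B$, $\mathrm{s}_B$ annihilate $\mathrm{Ker}(\E_d)$ and hence descend to the quotient $\BDD(B)/\mathrm{Ker}(\E_d)$, so that they can be bounded by $|\E_d v|(\overline{B})$. But these functionals do the opposite: the proof of Theorem~\ref{MainTheoremINTRO2} shows precisely that for $L(y)=(A+\gamma\Id)y+(s\cdot y)y-s|y|^2/2+b \in \mathrm{Ker}(\E_d)$ one has $\mathrm{A}_K[L]=A$, $\gamma_K[L]=\gamma$, $\mathrm{s}_K[L]=s$. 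They are constructed to \emph{extract} the kernel coefficients, not to kill them. Consequently a bound of the form $|\mathrm{A}_B[v]|+|\gamma_B[v]|+|\mathrm{s}_B[v]|\leq C\,|\E_d v|(\overline{B})$ cannot hold: take any nonzero $L\in\mathrm{Ker}(\E_d)$ and the right side vanishes while the left side does not. The scale-invariant inequality you then feed into Lemma~\ref{lem:ChangeOfScale} is therefore false, and the three decays in \eqref{eqn:decaysOnBall} do not follow from your argument.

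The Kohn-type non-local representation in Proposition~\ref{prop:NonLocalRepr} also does not give what you need: it writes $\mathrm{A}_{\varrho,x}[u]-\mathrm{A}_{\tau,x}[u]$ (and analogously for $\gamma$, $\mathrm{s}$) as an integral of a bounded kernel against $\E_d u$ over the annulus $B_\tau\setminus B_\varrho$. The fixed remainder $\mathrm{A}_{\tau,x}[u]$ is not controlled by $|\E_d u|$. The paper's Proposition~\ref{propo:keydecayment} exploits exactly this structure: the remainder term is a constant depending on $\tau$ and disappears after multiplication by $\varrho\to 0$, while the annulus integral is handled by a layer-cake estimate that uses $x\notin\Theta_u$ to show $\varrho\int_{B_\tau\setminus B_\varrho}|y-x|^{-n}\,d|\E_d u|$ is small. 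You would need to run that two-parameter argument (first $\varrho\to 0$, then $\tau\to 0$) rather than a single scale-invariant bound. Once \eqref{eqn:decaysOnBall} is secured, your treatment of the mean convergence of $u-\mathrm{b}_{B_\varrho(x)}[u]$ via the Poincar\'e inequality and the explicit shape of $\mathcal{R}_K$ in \eqref{eqn:linopeker} is correct and matches the paper's.
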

The proof of the above Proposition is based on an useful characterization of $\Aa_{B_{\varrho}(x)}$, $\gamma_{B_{\varrho}(x)}$,  $\ss_{B_{\varrho}(x)}$ developed in the spirit of \cite{ambrosio1997fine}, \cite{kohn1980new}. 
The proof is quite technical and the techniques are well-known therefore we postpone it to the Appendix. We instead report here the proof of Proposition \ref{propo:HigerOrderDies}.
\begin{proof}
Let $x_0\notin \Theta_u$. The strategy is to start from $\mathcal{R}_{B_{\varrho}(x_0)}$ and replace each quantity with the one under analysis. For example, to deduce the correct decay rate on $\ss_{K_{\varrho}(x_0)}$ we consider the linear application
	\begin{align*}
	\bar{\mathcal{R}}_{\varrho,x_0}[u]:=&(\ss_{K_{\varrho}(x_0)}[u]\cdot (y-x_0))(y-x_0)-\frac{|y-x_0|^2}{2}\ss_{K_{\varrho}(x_0)}[u]\\
	&+(\Aa_{B_{\varrho}(x_0)}[u] +\gamma_{B_{\varrho}(x_0)}[u]\Id)(y-x_0)+\bb_{B_{\varrho}(x_0)}[u]
	\end{align*}
which is basically $\mathcal{R}_{B_{\varrho}(x_0)}$ where we replaced $s_{B_{\varrho}(x_0)}[u]$ with $s_{K_{\varrho}(x_0)}[u]$. The same computation as exploited in the proof of Lemma \ref{lem:ChangeOfScale} tells that
	\[
	\bar{\mathcal{R}}_{1,0}[u(x_0+\varrho \cdot)](y)=\bar{\mathcal{R}}_{\varrho,x_0}[u](x_0+\varrho y), 
	\]
while the same arguments in the Proof of Theorem \ref{MainTheoremINTRO2} in Section \ref{sct:KernelProjection} yields $\bar{\mathcal{R}}_{\varrho,x_0}[L]=L$ for all $L\in \mathfrak{K}$. In particular Proposition \ref{prop:poincare} combined with these two facts, as in Proposition \ref{prop:poincare}, leads us to say that
	\[
	\| u - \bar{\mathcal{R}}_{\varrho}[u]\|_{L^1(B_{\varrho}(x_0))} \leq c \varrho |\E_d|  (B_{\varrho}(x_0)) \ \ \ \text{for all $u\in \BDD(\Omega)$}
	\]
for a constant $c>0$ depending on  $\bar{\mathcal{R}}_{1,0}$ only. Henceforth
	\begin{align*}
	\| u - \bar{\mathcal{R}}_{\varrho,x_0}[u]\|_{L^1(B_{\varrho}(x_0))}\geq & \int_{B_{\varrho}(x_0)} \left|(s_{K_{\varrho}(x_0)}[u]\cdot (y-x_0)) (y-x_0) - \frac{|y-x_0|^2}{2}s_{K_{\varrho}(x_0)}[u]\right|\d y\\
	&- \int_{B_{\varrho}(x_0)}\varrho \left|\Aa_{B_{\varrho}(x_0)}[u] +\gamma_{B_{\varrho}(x_0)}[u]\Id   \right|\d y\\
		&- \int_{B_{\varrho}(x_0)} \left|u(y)- \bb_{B_{\varrho}(x_0)}[u] \right|\d y.
	\end{align*}
Moreover, for $v\in \R^n$ it holds
	\begin{align*}
	\int_{B_{\varrho}(x_0)}& \left|(v\cdot (y-x_0)) (y-x_0) - \frac{|y-x_0|^2}{2}v\right|\d y=\varrho^{n}\int_{B} \varrho^2\left|(v\cdot z) z - \frac{|z|^2}{2}v\right|\d z\\
	&\geq \varrho^{n+2}\left| \int_{B}  \left[(v\cdot z) z - \frac{|z|^2}{2}v\right]\d z\right| = \varrho^{n+2}\left| \int_{0}^1 t^{n+1} \int_{\partial B}  \left[(v\cdot z) z - \frac{v}{2}\right]\d \H^{n-1}(z)\right|\\
			&=  \varrho^{n+2}C(n) |v|.
	\end{align*}
Henceforth
	\begin{align*}
	 \varrho^2  |s_{K_{\varrho}(x_0)}[u]|\leq& c \frac{|\E_d u|(B_{\varrho}(x_0)) }{\varrho^{n-1}}\\
	 &+ \kappa \left(\fint_{B_{\varrho}(x_0)} \left|u(y)- \bb_{B_{\varrho}(x_0)}[u] \right|\d y+\varrho \left|\Aa_{B_{\varrho}(x_0)}[u] +\gamma_{B_{\varrho}(x_0)}[u]\Id \right|\right).
	\end{align*}
Thanks to Proposition \ref{propo:HigerOrderDiesOnBall} and the fact that $x_0\notin \Theta_u$ we get
	\[
	\lim_{\varrho\rightarrow 0}  \varrho^2  |s_{K_{\varrho}(x_0)}[u]|=0.
	\]
The other quantities can be treated similarly by applying the previous argument to the linear application $\mathcal{R}_{B_{\varrho}(x_0)} $ where we have replaced $\Aa_{B_{\varrho}(x_0)}$ with $\Aa_{K_{\varrho}(x_0)}$
	\begin{align*}
	\bar{\mathcal{R}}_{\varrho,x_0}[u]:=&(\ss_{B_{\varrho}(x_0)}[u]\cdot (y-x_0))(y-x_0)-\frac{|y-x_0|^2}{2}\ss_{B_{\varrho}(x_0)}[u]\\
	&+(\Aa_{K_{\varrho}(x_0)}[u] +\gamma_{B_{\varrho}(x_0)}[u]\Id)(y-x_0)+\bb_{B_{\varrho}(x_0)}[u],
	\end{align*}
	and to the linear application $\mathcal{R}_{B_{\varrho}(x_0)} $ where we have replaced $\gamma_{B_{\varrho}(x_0)}$ with $\gamma_{K_{\varrho}(x_0)}$
		\begin{align*}
	\bar{\mathcal{R}}_{\varrho,x_0}[u]:=&(\ss_{B_{\varrho}(x_0)}[u]\cdot (y-x_0))(y-x_0)-\frac{|y-x_0|^2}{2}s_{B_{\varrho}(x_0)}[u]\\
	&+(\Aa_{B_{\varrho}(x_0)}[u] +\gamma_{K_{\varrho}(x_0)}[u]\Id)(y-x_0)+\bb_{B_{\varrho}(x_0)}[u].
	\end{align*}
\end{proof}

\section{Appendix}\label{sct:appendix}
 
We here provide a proof for the Proposition \ref{propo:HigerOrderDies}. In order to do this we set up the lighter notation 	
\begin{align*}
\Aa_{\varrho,x}:=\Aa_{B_{\varrho}(x)}, \ \ \ \ &\gamma_{\varrho,x}:=\gamma_{B_{\varrho}(x)}\\
\ss_{\varrho,x}:=\ss_{B_{\varrho}(x)}, \ \ \ \ &
\bb_{\varrho,x}:=\bb_{B_{\varrho}(x)}.
\end{align*}
We can Moreover, in this case we can explicitely compute the above quantities.
 
	\begin{lemma}\label{lemAp:Compute}
	It holds
		\begin{align*}
		\ss_{\varrho,x}[u]&= \frac{1}{(n-1) \omega_n \varrho^{n+1}}\int_{\partial B_{\varrho}(x)} \left[u(y)-n\left(\frac{  y-x }{| y-x |} \cdot u(y) \right)\frac{ y-x }{|y-x|}\right]\d\H^{n-1}(y)\\
		\Aa_{\varrho,x}[u]&=\frac{1}{2\omega_n \varrho^n}\int_{\partial B_{\varrho}(x)} \left[u(y)\otimes\frac{ y-x }{| y-x |}-\frac{ y-x }{| y-x |}\otimes u(y) \right]\d\H^{n-1}(y)\\
		\gamma_{\varrho,x}[u]&=\frac{1}{n\omega_n \varrho^n} \int_{\partial B_{\varrho}(x)} \left( u\cdot \frac{ y-x }{| y-x |}\right) \d\H^{n-1}(y)\\
		\bb_{\varrho,x}[u]&= \frac{1}{n\omega_n \varrho^{n-1}}\int_{\partial B_{\varrho}(x)} u(y)\d\H^{n-1}(y)+\frac{n-2}{2n}\ss_{\varrho,x}[u]\varrho^2.
		\end{align*}
	\end{lemma}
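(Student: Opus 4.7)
The plan is to prove each identity by direct substitution into the general definitions \eqref{eqn:s}--\eqref{eqn:d} of $\ss_K,\Aa_K,\gamma_K,\bb_K$, using the concrete geometric data for the sphere: $|B_{\varrho}(x)|=\omega_n\varrho^n$, $P(B_{\varrho}(x))=n\omega_n\varrho^{n-1}$, $\mathrm{bar}(B_{\varrho}(x))=x$, and $\nu_{B_{\varrho}(x)}(y)=(y-x)/\varrho$ for $y\in\partial B_{\varrho}(x)$. For the formulas of $\Aa_{\varrho,x}$ and $\gamma_{\varrho,x}$ nothing is needed beyond inserting these quantities into \eqref{eqn:A} and \eqref{eqn:T}: the claimed identities are literally the definitions rewritten with the explicit values of $|K|$ and $\nu_K$.

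The nontrivial identity is the one for $\ss_{\varrho,x}$. Here the plan is to compute $D^{\mathrm{Tan}}\nu_{B_{\varrho}(x)}$ explicitly. Since the sphere is $C^2$, Remark \ref{rmk:CurvBalls} applies and gives
\[
D^{\mathrm{Tan}}\nu_K=\bigl(\nabla^{\mathrm{Tan}}\nu_K-\mathbf{H}_K\otimes\nu_K\bigr)\H^{n-1}\restr_{\partial B_{\varrho}(x)}.
\]
Extending $\nu_K$ to the radial field $y\mapsto(y-x)/|y-x|$ on $\R^n\setminus\{x\}$ and differentiating yields $\nabla\nu_K=\varrho^{-1}(\Id-\nu_K\otimes\nu_K)$ on $\partial B_{\varrho}(x)$; since $(\nabla\nu_K)\nu_K=0$ this already equals $\nabla^{\mathrm{Tan}}\nu_K$. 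Combined with $\mathbf{H}_K=\tfrac{n-1}{\varrho}\nu_K$ this gives
\[
D^{\mathrm{Tan}}\nu_{B_{\varrho}(x)}=\tfrac{1}{\varrho}\bigl(\Id-n\,\nu_K\otimes\nu_K\bigr)\H^{n-1}\restr_{\partial B_{\varrho}(x)},
\]
and substituting into \eqref{eqn:s} produces the stated expression for $\ss_{\varrho,x}[u]$ after replacing $\nu_K$ by $(y-x)/\varrho$.

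For $\bb_{\varrho,x}$ the only thing to compute is $\tau_{B_{\varrho}(x)}$. By rotational invariance of the sphere the tensor $\int_{\partial B_{\varrho}(x)}(y-x)\otimes(y-x)\,d\H^{n-1}(y)$ is a scalar multiple of $\Id$, and the scalar is fixed by taking traces: it equals $\tfrac{1}{n}\int_{\partial B_{\varrho}(x)}|y-x|^2\,d\H^{n-1}=\omega_n\varrho^{n+1}$. Inserting this together with $\int_{\partial B_{\varrho}(x)}\tfrac{|y-x|^2}{2}\Id\,d\H^{n-1}=\tfrac{n\omega_n\varrho^{n+1}}{2}\Id$ into the definition of $\tau_K$ and dividing by $P(B_{\varrho}(x))=n\omega_n\varrho^{n-1}$ yields $\tau_{B_{\varrho}(x)}=\tfrac{n-2}{2n}\varrho^2\Id$, and plugging this into \eqref{eqn:d} gives the claimed identity for $\bb_{\varrho,x}[u]$.

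The main obstacle I expect is the explicit derivation of $D^{\mathrm{Tan}}\nu_{B_{\varrho}(x)}$: one must be careful with the integration-by-parts identity \eqref{eqn:MeanCurv2}, with the conventions for $\mathbf{H}_K$ versus the scalar mean curvature $H_K$, and with the sign produced when passing the derivative from $\nu_K$ to the test function. Once this is pinned down, everything else reduces to elementary moment computations on the sphere, exploiting its rotational symmetry.
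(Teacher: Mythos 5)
Your proposal is correct and follows exactly the paper's own route: translate the definitions \eqref{eqn:A}, \eqref{eqn:T} directly for $\Aa$ and $\gamma$; use Remark \ref{rmk:CurvBalls} with $\nabla^{\mathrm{Tan}}\nu_{B_\varrho}=\varrho^{-1}(\Id-\nu\otimes\nu)$ and $\mathbf{H}_{B_\varrho}=\frac{n-1}{\varrho}\nu$ to identify $D^{\mathrm{Tan}}\nu_{B_\varrho}$ and substitute into \eqref{eqn:s}; and compute $\tau_{B_\varrho(x)}=\frac{n-2}{2n}\varrho^2\Id$ from the spherical moment identity to get $\bb$. The sign caveat you flag for the $\ss$ computation is real and worth being careful about — the paper's conventions in \eqref{eqn:s}, \eqref{eqn:MeanCurv2}, and the smooth-case formula in Section \ref{sct:KernelProjection} are not entirely consistent with one another — but the approach you describe is precisely the one the paper carries out.
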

\begin{proof}
For $\Aa_{\varrho,x}$, $\gamma_{\varrho,x}$ it is enough to translate the definition given in \eqref{eqn:A}, \eqref{eqn:T}. Also $\bb_{\varrho,x}$ can be obtained by confronting it with \eqref{eqn:d} and by observing that
	\begin{align*}
	\tau_{B_{\varrho}(x)}=&\frac{1}{n\omega_n \varrho^{n-1}} \int_{\partial B_{\varrho}(x)} \left[\frac{|y-x|^2}{2}\Id -  (y-x) \otimes (y-x) \right]\d\H^{n-1}(y)\\
	=&\frac{\varrho^2}{n\omega_n \varrho^{n-1}} \int_{\partial B_{\varrho}(x)} \left[\frac{1}{2}\Id -  \frac{(y-x)}{|y-x|} \otimes \frac{(y-x)}{|y-x|} \right]\d\H^{n-1}(y)\\
	=&\frac{\varrho^2}{2}\Id- \frac{\varrho^2}{n\omega_n} \int_{\partial B }  z\otimes z \d\H^{n-1}(z)=\frac{\varrho^2}{2}\Id- \frac{\varrho^2}{n}\Id=\varrho^2\frac{(n-2)}{2n}\Id
	\end{align*}
since
		\begin{align*}
		\int_{\partial B}  z\otimes z\d\H^{n-1}(z)=|B|\Id.
		\end{align*}
We also compute $\ss_{\varrho,x}$ starting from \eqref{eqn:s} and by recalling also Remark \ref{rmk:CurvBalls} :
	\begin{align*}
	\ss_{\varrho,x}=	&\frac{1}{(n-1)\omega_n \varrho^n} \int_{\partial B_{\varrho}(x)}u(y)^t \nabla^{\mathrm{Tan} }\left(\frac{y-x}{|y-x|}\right)  \d\H^{n-1}(y)\\
    &-\frac{1}{(n-1)\omega_n \varrho^n} \int_{\partial B_{\varrho}(x)} \frac{(n-1)}{\varrho} \left(\frac{y-x}{|y-x|} \cdot u(y)\right)\frac{y-x}{|y-x|} \d\H^{n-1}(y)\\
	=&\frac{1}{(n-1)\omega_n \varrho^{n+1}} \int_{\partial B_{\varrho}(x)}u(y)^t\left[\Id - \frac{y-x}{|y-x|}\otimes  \frac{y-x}{|y-x|}\right]  \d\H^{n-1}(y)\\
	&-  \frac{1}{(n-1)\omega_n \varrho^{n+1}} \int_{\partial B_{\varrho}(x)} (n-1) \left(\frac{y-x}{|y-x|} \cdot u(y)\right)\frac{y-x}{|y-x|} \d\H^{n-1}(y)\\
	=&\frac{1}{(n-1)\omega_n \varrho^{n+1}} \int_{\partial B_{\varrho}(x)}\left[u(y)- \left(u(y)\cdot \frac{y-x}{|y-x|}\right) \frac{y-x}{|y-x|}\right]  \d\H^{n-1}(y)\\
	&-  \frac{1}{(n-1)\omega_n \varrho^{n+1}} \int_{\partial B_{\varrho}(x)} (n-1) \left(\frac{y-x}{|y-x|} \cdot u(y)\right)\frac{y-x}{|y-x|} \d\H^{n-1}(y)\\
		=&\frac{1}{(n-1)\omega_n \varrho^{n+1}} \int_{\partial B_{\varrho}(x)} \left[u(y) - n \left(\frac{y-x}{|y-x|} \cdot u(y)\right)\frac{y-x}{|y-x|}\right] \d\H^{n-1}(y).
	\end{align*}
\end{proof}
The next Proposition allows us to re-write $\mathrm{A}_{\varrho,x}$, $\gamma_{\varrho,x}$, $\mathrm{s}_{\varrho,x}$ in terms of a non-local interaction. These computations have been done by mimicking the approach in \cite{ambrosio1997fine}, \cite{kohn1980new}.
\begin{proposition}\label{prop:NonLocalRepr}
Let $u\in \BDD(\R^n)$. Then, for any $\tau>0$, and $0<\varrho<\tau$ it holds that
    \begin{align}
        \Aa_{\varrho,x}[u]&=-\frac{1}{\omega_n}\int_{B_{\tau}(x)\setminus B_{\varrho}(x)} \frac{\Gamma(y-x)}{|y-x|^{n}}\d \E_d u(y)+\Aa_{\tau,x}[u]\label{eqn:antisym}\\
        \gamma_{\varrho,x}[u]\Id &=-\frac{1}{(n-1)\omega_n}\int_{B_{\tau}(x)\setminus B_{\varrho}(x)}\frac{\Xi(y-x)}{|y-x|^{n}}\d \E_d u(y)+\gamma_{\tau,x}[u]\Id\label{eqn:diver}\\
        \ss_{\varrho,x}[u]&= -\frac{1}{(n-1)\omega_n} \int_{B_{\tau}(x) \setminus B_{\varrho}(x)} \frac{\Upsilon(y-x)}{|y-x|^{n+1}} \d \E_du(y)+\ss_{\tau,x}[u]\label{eqn:scdorder}
    \end{align}
where $\Gamma(z),\Xi(z):\mathbb{M}^{n\times n}_{\mathrm{sym}_0} \rightarrow \mathbb{M}^{n\times n} $ are the zero homogeneus tensors acting as
    \begin{align*}
    \Gamma(z)M&:=\frac{ Mz  \otimes z}{|z|^2}   - \frac{  z\otimes M z}{|z|^2}  \\
    \Xi(z)M&:=\left( \frac{(z^t M z)}{|z|^2}-\frac{\trace(M)}{n}\right) \Id  
    \end{align*}
and  
$\Upsilon(z):\mathbb{M}^{n\times n}_{\mathrm{sym}_0}\rightarrow \R^n$ is the  zero homogeneus tensor acting as
    \begin{align*}
        \Upsilon(z) M&:=2\frac{M z}{|z|}-(n+2) \frac{(z^tMz)}{|z|^2} \frac{z}{|z|}+\trace(M) \frac{z}{|z|}.
    \end{align*}
\end{proposition}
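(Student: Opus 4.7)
The plan is to establish each of \eqref{eqn:antisym}, \eqref{eqn:diver}, \eqref{eqn:scdorder} by applying the Gauss--Green formula \eqref{eqn:GGformula} on the annulus $A_{\varrho,\tau}:=B_\tau(x)\setminus\overline{B_\varrho(x)}$ with a carefully chosen trace-free symmetric test tensor that is \emph{divergence-free on $\R^n\setminus\{x\}$}. With such an $F$ the bulk term $\int_{A_{\varrho,\tau}}u\cdot\dive F\,\d y$ in \eqref{eqn:GGformula} vanishes, and accounting for the outward normals to $A_{\varrho,\tau}$ one is left with
\[
\int_{A_{\varrho,\tau}} F\cdot \d\E_d u \;=\; \int_{\partial B_\tau(x)}(\nu\otimes_{\E_d}u)\cdot F\,\d\H^{n-1} - \int_{\partial B_\varrho(x)}(\nu\otimes_{\E_d}u)\cdot F\,\d\H^{n-1},
\]
with $\nu=(y-x)/|y-x|$ in both boundary integrals. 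The right-hand side will reproduce the quantities of Lemma \ref{lemAp:Compute} at radii $\tau$ and $\varrho$, while the left-hand side, after expanding $F:M$ for trace-free symmetric $M$, will coincide with the actions of $\Gamma$, $\Xi$, $\Upsilon$ divided by the appropriate power of $|y-x|$.

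Setting $w:=y-x$, $r:=|w|$, $\nu:=w/r$ and using that $(\nu\otimes_{\E_d}u)\cdot F=\nu\cdot Fu$ for any symmetric trace-free $F$, the three test fields are
\begin{align*}
F^{(\gamma)}(y)&:=\frac{1}{r^n}\bigl(\II-n\,\nu\otimes\nu\bigr),\\
F^{(k)}(y)&:=\frac{1}{r^{n+1}}\bigl[\,2(e_k\odot\nu)-(n+2)\nu_k(\nu\otimes\nu)+\nu_k\II\,\bigr]\quad (k=1,\dots,n),\\
F^{ij}(y)&:=\frac{2}{r^{n+2}}\bigl[\,(e_i\odot w)\,w_j-(e_j\odot w)\,w_i\,\bigr]\quad (i<j).
\end{align*}
All three are trace-free symmetric by inspection. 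The decisive check is divergence-freeness on $\R^n\setminus\{x\}$: using $\partial_\ell\nu_k=(\delta_{k\ell}-\nu_k\nu_\ell)/r$ and $\dive\nu=(n-1)/r$, a direct computation yields $\dive\bigl[\alpha(r)(\II-n\,\nu\otimes\nu)\bigr]=-(n-1)[\alpha'+n\alpha/r]\nu$, which vanishes precisely for $\alpha(r)=r^{-n}$; the same scheme applied to a radial ansatz of the form $\alpha(r)[\,2(e_k\odot\nu)-(n+2)\nu_k(\nu\otimes\nu)+\nu_k\II\,]$ forces the weight $r^{-(n+1)}$; and the divergence of $F^{ij}$ follows from a somewhat longer but analogous expansion in $w_k/r$, whose several contributions cancel in pairs.

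A straightforward computation on $\partial B_r(x)$ then gives
\[
\nu\cdot F^{(\gamma)}u=-\tfrac{n-1}{r^n}(\nu\cdot u),\quad
\nu\cdot F^{ij}u=\tfrac{1}{r^n}(u_i\nu_j-u_j\nu_i),\quad
\nu\cdot F^{(k)}u=\tfrac{1}{r^{n+1}}\bigl(u_k-n\nu_k(\nu\cdot u)\bigr),
\]
so Lemma \ref{lemAp:Compute} identifies the corresponding boundary integrals with $-n(n-1)\omega_n\,\gamma_{r,x}[u]$, $2\omega_n(\Aa_{r,x}[u])_{ij}$ and $(n-1)\omega_n\,\ss_{r,x}[u]_k$, respectively. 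On the bulk side, for trace-free symmetric $M$ a one-line contraction shows that $F^{(\gamma)}:M=-n\,\nu^TM\nu/r^n$ is $-n$ times the $\II$-coefficient of $\Xi(w)M/|w|^n$; that $F^{ij}:M=(2/r^{n+2})[(Mw)_iw_j-(Mw)_jw_i]$ equals $2[\Gamma(w)M]_{ij}/|w|^n$; and that $F^{(k)}:M=(1/r^{n+1})[2(M\nu)_k-(n+2)\nu_k\,\nu^TM\nu]$ equals $[\Upsilon(w)M/|w|^{n+1}]_k$. Collecting the three identities and rearranging yields \eqref{eqn:antisym}, \eqref{eqn:diver}, \eqref{eqn:scdorder}.

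The main technical obstacle is the verification that $\dive F^{ij}=0$, which requires assembling several tensor contributions of different homogeneities and tracking the cancellations carefully; the other two divergences reduce to a single ODE in the radial weight. A minor but necessary remark is that each $F$ is smooth only on $\R^n\setminus\{x\}$: since \eqref{eqn:GGformula} is applied on the Lipschitz domain $A_{\varrho,\tau}$, on which every $F$ extends smoothly up to the boundary (and can be multiplied by a smooth cut-off near $x$ without altering its values on the annulus), this causes no issue.
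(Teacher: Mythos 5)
Your proposal follows exactly the same strategy as the paper: you build divergence-free, trace-free symmetric test tensors on $\R^n\setminus\{x\}$, apply the Gauss--Green identity \eqref{eqn:GGformula} on the annulus (so the bulk term drops), and identify the two boundary integrals with the quantities in Lemma \ref{lemAp:Compute}. Your test fields $F^{(\gamma)}, F^{(k)}, F^{ij}$ are precisely the paper's $\psi_{ij}$ (up to normalization factors $(n-1)\omega_n$ and $2\omega_n$ and reparametrization $y/|y|=\nu$), and your verifications of divergence-freeness, of the boundary contractions $(\nu\otimes_{\E_d}u)\cdot F=\nu\cdot Fu$, and of the identities $F:M$ versus $\Gamma,\Xi,\Upsilon$ are correct; the only stylistic difference is that you organize the divergence computations as a radial ODE in the weight $\alpha(r)$ rather than by the paper's direct term-by-term expansion.
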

\begin{proof}
Set, without loss of generality, $x=0$. Recall that for $f:\R^n \rightarrow\R$, $M:\R^n\rightarrow\mathbb{M}^{n\times n}$ the following chain rule formula holds:
\[
\dive(f(y) M(y))= f(y) \dive(M(y))+M(y)\nabla f(y).
\]
We now split the proof in three steps.\\
\smallskip

\textbf{Step one:} \textit{Proof of \eqref{eqn:antisym}}. Define the trace free symmetric matrix-valued map
    \[
    \psi_{i j}(y):=\frac{y_j (e_i\odot y)- y_i (e_j\odot y) }{\omega_n |y|^{n+2}}\in C^{\infty}(\R^n\setminus\{0\};\mathbb{M}^{n\times n}_{sym_0})
    \]
Then we claim that
    \begin{equation}\label{solves}
        \left\{
        \begin{array}{rl}
            (\mathbb{E}_d\left[\frac{y}{|y|}\right] u)\cdot \psi_{i j}(y) &=\frac{(u\otimes \frac{y}{|y|}-\frac{y}{|y|}\otimes u)_{i j}}{2\omega_n|y|^n}\\
            \E_d^* \psi_{i j} &=0 \ \ \ \text{on $\R^n\setminus \{0\}$}
        \end{array}
        \right.
    \end{equation}
Indeed, since $\psi_{i,j}$ is symmetric and trace free we can compute the adjoint operator
    \begin{align*}
    \E_d^* \psi (y) &=\dive(\psi_{ij}(y))
    \end{align*}
where recall that $\dive(M)$ is meant to be the row-divergence. Henceforth, since
    \begin{align*}
        \dive(y_h (e_r\otimes y)) &=  y_h(n+1) e_r \\
        \dive(y_h (y \otimes e_r)) &=y \delta_{hr}+y_h e_r\\
        \dive(y_h (e_r\odot y))=&y_h(n+2) e_r +y \delta_{hr}
    \end{align*}
we have 
\begin{align*}
    \dive\left( \frac{y_j}{|y|^{n+2}}  (e_i\odot y) \right)=&\frac{1}{|y|^{n+2}} [y_j(n+2) e_i +y \delta_{ij}]-\frac{(n+2)}{|y|^{n+4}}( y_j y_i y + e_i |y|^2).
\end{align*}
The above expression is symmetric in $i,j$ and thus
    \begin{align*}
         \dive(\psi_{ij}(y) & \dive\left( \frac{y_j}{|y|^{n+2}}  (e_i\odot y) \right)- \dive\left( \frac{y_i}{|y|^{n+2}}  (e_j\odot y) \right)=0.
    \end{align*}
Moreover
    \begin{align*}
       \left(\mathbb{E}_d\left[\frac{y}{|y|}\right]u\right)\cdot \psi_{i,j}(y)&=\frac{1}{\omega_n |y|^{n+2}} \left[u\odot \frac{y}{|y|}- \frac1n \left(u\cdot \frac{y}{|y|} \right)\Id\right]\cdot \left(y_j (e_i\odot y)- y_i (e_j\odot y)\right)\\
        &=\frac{1}{\omega_n |y|^{n+2}} \left(u\odot \frac{y}{|y|}\right)\cdot \left(y_j (e_i\odot y)- y_i (e_j\odot y)\right)
    \end{align*}
and
    \begin{align*}
        \left(u\odot \frac{y}{|y|}\right)\cdot (y_j (e_i\odot y)- y_i (e_j\odot y))&=  y_j  \left(u\odot \frac{y}{|y|}\right)\cdot  \left(e_i\odot y \right) - y_i \left(u\odot  y \right)\cdot  \left(e_j\odot\frac{y}{|y|}\right) \\
        &=\frac{y_j}{2|y|}(u_i|y|^2 +(u\cdot y)y_i) -\frac{y_i}{2|y|}(u_j|y|^2 +(u\cdot y)y_j) \\
        &= \frac{|y|^2}{2}\left(u\otimes \frac{y}{|y|}-\frac{y}{|y|}\otimes u\right)_{ij}  
    \end{align*}
since
\begin{align*}
     y_j  \left(u\odot \frac{y}{|y|}\right)\cdot  \left(e_i\odot y \right)&= \frac{y_j}{4|y|}\left(u\otimes y+y\otimes u\right)\cdot \left(e_i\otimes y+y\otimes e_i\right) \\
    =&\frac{y_j}{2|y|}(u_i|y|^2 +(u\cdot y)y_i)
\end{align*}
which finally prov\eqref{solves}. Finally note that 
\begin{equation}\label{eqn:adjointPsi}
    \psi_{ij}(y) \cdot M= (e_i\otimes e_j) \cdot \frac{\Gamma(y) M}{\omega_n |y|^n}, \ \ \text{for all $M\in \mathbb{M}^{n\times n}_{\mathrm{sym}_0}$}.
\end{equation} 
With such a $\psi_{ij}$ at hand we observe that, for $0<\varrho<\tau$ we have thanks to \eqref{eqn:adjointPsi},\eqref{eqn:GGformula} and \eqref{solves} that
    \begin{align*}
    (e_i\otimes e_j)\cdot \int_{B_{\tau}(0)\setminus B_{\varrho}(0)} \frac{\Gamma(y)}{\omega_n|y|^{n+2}}\d \E_d u(y)  = &\int_{B_{\tau}(0)\setminus B_{\varrho}(0)} \psi_{ij}(y)\cdot \d \E_d u(y)\\
    =&\int_{\partial (B_{\tau}(0)\setminus B_{\varrho}(0)) } (\mathbb{E}_d [\nu] u)\cdot \psi_{ij}(y)\d\H^{n-1}(y)\\
    =&\int_{\partial B_{\tau}(0)}\left(\mathbb{E}_d \left[\frac{y}{|y|}\right] u\right)\cdot \psi_{ij}(y)\d\H^{n-1}(y)\\
    &-\int_{\partial B_{\varrho}(0)) } \left(\mathbb{E}_d \left[\frac{y}{|y|}\right] u\right)\cdot \psi_{ij}(y)\d\H^{n-1}(y)\\
    =&\frac{1}{2\omega_n \tau^n}\int_{\partial B_{\tau}(0)}(u\otimes \nu(y)-\nu(y)\otimes u)_{ij}\d\H^{n-1}(y)\\
    &-\frac{1}{2\omega_n \varrho^n}\int_{\partial B_{\varrho}(0)}(u\otimes \nu(y)-\nu(y)\otimes u)_{ij}\d\H^{n-1}(y)\\
    =&(e_i\otimes e_j)\cdot (\mathrm{A}_{\tau,x}[u]-\mathrm{A}_{\varrho,x}[u])
   \end{align*}
\smallskip

\textbf{Step two:} \textit{Proof of \eqref{eqn:diver}}. Define the trace free symmetric-matrix valued map
    \[
    \psi_{ij}(y):=  \frac{\delta_{ij}}{(n-1)\omega_n |y|^{n }}\left[\frac{y}{|y|}\otimes \frac{y}{|y|}-\frac{\Id}{n}\right]
    \]
and again notice that
    \begin{equation}\label{solves2}
        \left\{
        \begin{array}{rl}
        \left(\mathbb{E}_d\left[\frac{y}{|y|}\right]u\right)\cdot  \psi_{ij}(y)&=\delta_{ij}\frac{\left(u\cdot\frac{y}{|y|}\right)}{n\omega_n|y|^n}   \\
            \E_d^*  \psi_{ij} &=0 \ \ \text{on $\R^n\setminus \{0\}$ }
        \end{array}\right.
    \end{equation}
Indeed
    \begin{align*}
     \dive\left(\frac{y\otimes y}{|y|^{n+2}}\right)=&-(n+2)\frac{y}{|y|^{n+2}} +(n+1)\frac{y}{|y|^{n+2}}=\frac{y}{|y|^{n+2}} \\
\dive\left(\frac{1}{n|y|^n}\Id \right)=&-\frac{y}{|y|^{n+2}}
    \end{align*}
giving for $y\neq 0$
    \[
    \E_d^*  \psi_{ij}(y)=0.
    \]
Moreover
    \begin{align*}
    \left(\mathbb{E}_d\left[\frac{y}{|y|}\right]u\right)\cdot  \psi_{ij}(y)=&\frac{\delta_{ij}}{(n-1)\omega_n |y|^n}\left(u\odot \frac{y}{|y|}\right) \cdot \left(\frac{y}{|y|}\otimes \frac{y}{|y|}-\frac{1}{n}\Id\right)\\
    =&\frac{\delta_{ij}}{(n-1)\omega_n |y|^n}\left[\left(u\odot \frac{y}{|y|}\right)\cdot \frac{y}{|y|}\otimes \frac{y}{|y|} - \frac{1}{n}\left(u\cdot \frac{y}{|y|}\right)\right]\\
      =&\frac{\delta_{ij}}{n\omega_n |y|^n} \left(u\cdot \frac{y}{|y|}\right).
    \end{align*}
which finally proves \eqref{solves2}. Notice that
    \begin{equation}\label{adjoinpsitrace}
    ( \psi_{ij}(y)\cdot M)=(e_i\otimes e_j)\cdot \frac{\Xi(y)M}{(n-1)\omega_n |y|^n}, \ \ \ \text{for any $M\in \mathbb{M}_{\mathrm{sym}_0}^{n\times n}$}
    \end{equation}
 and henceforth again due to \eqref{eqn:GGformula}, \eqref{solves2} and \eqref{adjoinpsitrace} we get
    \begin{align*}
        (e_i\otimes e_j)\cdot \frac{1}{(n-1)\omega_n }\int_{B_{\tau}(0)\setminus B_{\varrho}(0)} \frac{\Xi(y)}{|y|^{n}}\d \E_d u(y)=&\int_{B_{\tau}(0)\setminus B_{\varrho}(0)}  \psi_{ij}(y) \cdot \d \E_d u(y) \\
        =&\int_{\partial B_{\tau}(0) } \left(\mathbb{E}_d\left[\frac{y}{|y|}\right] u \right)\cdot  \psi_{ij}(y) \d \H^{n-1} \\
        &- \int_{\partial B_{\varrho}(0) } \left(\mathbb{E}_d\left[\frac{y}{|y|}\right] u \right)\cdot  \psi_{ij}(y) \d \H^{n-1}\\
         =&(\gamma_{\tau,x }[u]-\gamma_{\varrho,x }[u] ) \delta_{ij}.
    \end{align*}
\smallskip

\textbf{Step Three:} \textit{Proof of \eqref{eqn:scdorder}}. Given the trace free symmetric matrix defined as
    \[
    \psi_k(y):=\frac{1}{(n-1)\omega_n|y|^{n+2}}\left( 2 e_k\odot y  -  (n+2) y_k  \frac{y}{|y|}\otimes \frac{y}{|y|}+y_k \Id\right)  
    \]
we have
\begin{align*}
2    \dive \left(\frac{ e_k\odot y}{|y|^{n+2}}\right)=& (n+1) \frac{e_k}{|y|^{n+2}}-\frac{(n+2)}{ |y|^{n+4}} (e_k |y|^2 + y_ky)\\
    -(n+2) \dive\left(\frac{y_k}{|y|^{n+2}} \frac{y}{|y|}\otimes \frac{y}{|y|}\right)=&-n (n+2)y_k\frac{y}{|y|^{n+4}}+\frac{(n+2)^2}{|y|^{n+4}} y_k   y\\
    \dive\left(\frac{y_k}{|y|^{n+2}} \Id\right)=&\frac{e_k}{|y|^{n+2}}-\frac{(n+2)}{|y|^{n+4}}y_k y.
\end{align*}
Hence we immediately see that
\begin{align*}
     \E_d^*  \psi_k(y) =&\dive(\psi_k)=0.
\end{align*}
Also
\begin{align*}
 \psi_k(y)\cdot  \mathbb{E}_du\left[\frac{y}{|y|}\right]  =&\frac{1}{(n-1)\omega_n|y|^{n+2}}\left( 2 e_k\odot y  -  (n+2) y_k  \frac{y}{|y|}\otimes \frac{y}{|y|}+y_k \Id\right)   \cdot \left(u\odot \frac{y}{|y|}\right)\\
 &=\frac{1}{(n-1)\omega_n|y|^{n+2}}\left(   u_k |y| + \frac{y_k}{|y|} (u\cdot y) - (n+2)\frac{y_k}{|y|} (u\cdot y)  + \frac{y_k}{|y|} (u\cdot y)\right)\\
 &=\frac{1}{(n-1)\omega_n|y|^{n+1}}\left(   u_k     -  n \frac{y_k}{|y|} \left(u\cdot \frac{y}{|y|}\right)   \right)\\
 &=\frac{1}{(n-1)\omega_n|y|^{n+1}} \left(\left[\Id - n \frac{y}{|y|}\otimes \frac{y}{|y|}\right]u \right)_k.
\end{align*}
Since we see that
   \[
   \frac{e_k \cdot \Upsilon(y)M}{(n-1)\omega_n |y|^{n+1}}=  \psi_k(y)\cdot M  \ \ \ \text{for any $M\in \mathbb{M}_{\mathrm{sym}_0}^{n\times n}$}
   \]
we conclude again as in Step one and two. 
\end{proof}
\begin{proposition}\label{propo:keydecayment}
If $x\notin \Theta_u$ then
    \[
   \lim_{\varrho\rightarrow 0} \varrho^2|\mathrm{s}_{\varrho,x}[u]|=\lim_{\varrho\rightarrow 0}\varrho|\mathrm{A}_{\varrho,x}[u]|=\lim_{\varrho\rightarrow 0}\varrho|\gamma_{\varrho,x}[u]|=0.
    \]
\end{proposition}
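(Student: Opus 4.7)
The strategy is to feed Proposition \ref{prop:NonLocalRepr} into a radial integration-by-parts argument, converting the hypothesis $x\notin\Theta_u$ (i.e.\ $\psi(r):=|\E_d u|(\overline{B_r(x)})$ satisfies $\psi(r)/r^{n-1}\to 0$) into the desired decay. Fix any $\tau>0$ small enough that $B_\tau(x)\subset\Omega$. Because $\Gamma$, $\Xi$, $\Upsilon$ are $0$-homogeneous, their values on $(y-x)/|y-x|$ are uniformly bounded by some constant $C=C(n)$; hence the three identities \eqref{eqn:antisym}--\eqref{eqn:scdorder} give
\[
|\Aa_{\varrho,x}[u]|+|\gamma_{\varrho,x}[u]|\leq C\!\int_{B_\tau(x)\setminus B_\varrho(x)}\!\frac{\d|\E_d u|(y)}{|y-x|^{n}}+R_\tau,\qquad |\ss_{\varrho,x}[u]|\leq C\!\int_{B_\tau(x)\setminus B_\varrho(x)}\!\frac{\d|\E_d u|(y)}{|y-x|^{n+1}}+R_\tau',
\]
where $R_\tau,R_\tau'$ are constants independent of $\varrho$. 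After multiplying by $\varrho$ or $\varrho^2$, these remainders vanish in the limit $\varrho\to 0$, so the only real task is controlling the two integrals.

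Next, since the integrands depend only on $r=|y-x|$, the coarea-type identity reduces everything to a one-dimensional Stieltjes integral against the nondecreasing function $\psi$:
\[
\int_{B_\tau(x)\setminus B_\varrho(x)}\frac{\d|\E_d u|(y)}{|y-x|^{\alpha}}=\int_{\varrho}^{\tau}\frac{\d\psi(r)}{r^{\alpha}}=\frac{\psi(\tau)}{\tau^{\alpha}}-\frac{\psi(\varrho)}{\varrho^{\alpha}}+\alpha\int_{\varrho}^{\tau}\frac{\psi(r)}{r^{\alpha+1}}\d r,
\]
by Lebesgue--Stieltjes integration by parts (taking $\psi$ right-continuous handles any atoms of $|\E_d u|$ carried by spheres). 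Here $\alpha=n$ for the $\Aa$ and $\gamma$ estimates, and $\alpha=n+1$ for the $\ss$ estimate.

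For $\Aa_{\varrho,x}$ and $\gamma_{\varrho,x}$, multiply by $\varrho$. The first boundary term contributes $\varrho\psi(\tau)/\tau^{n}\to 0$; the second gives $\psi(\varrho)/\varrho^{n-1}\to 0$ by the hypothesis $x\notin\Theta_u$; for the remaining integral, given $\epsilon>0$ pick $\tau$ so small that $\psi(r)\leq\epsilon r^{n-1}$ for all $r\leq\tau$, whence
\[
\varrho\,n\!\int_{\varrho}^{\tau}\frac{\psi(r)}{r^{n+1}}\d r\leq n\,\epsilon\,\varrho\!\int_{\varrho}^{\tau}\!\frac{\d r}{r^{2}}\leq n\,\epsilon.
\]
Letting $\varrho\to 0$ then $\epsilon\to 0$ proves $\varrho|\Aa_{\varrho,x}[u]|,\varrho|\gamma_{\varrho,x}[u]|\to 0$. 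For $\ss_{\varrho,x}$, multiplying by $\varrho^2$ and applying the same splitting with $\alpha=n+1$, the boundary terms yield $\varrho^{2}\psi(\tau)/\tau^{n+1}\to 0$ and $\varrho^{2}\psi(\varrho)/\varrho^{n+1}=\psi(\varrho)/\varrho^{n-1}\to 0$, while the integral is dominated by $\varrho^{2}(n+1)\epsilon\int_\varrho^\tau r^{-3}\d r\leq(n+1)\epsilon/2$, again arbitrarily small. This gives $\varrho^{2}|\ss_{\varrho,x}[u]|\to 0$ and completes the proof.

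The argument is essentially routine once the representation of Proposition \ref{prop:NonLocalRepr} is granted; the only mild subtlety is handling the possible discontinuities of $\psi$ in the integration-by-parts step, which is dealt with by the standard Lebesgue--Stieltjes framework (the integrand $r^{-\alpha}$ is continuous, so no boundary correction beyond the one written above appears).
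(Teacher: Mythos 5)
Your argument is correct and follows essentially the same route as the paper: both start from the non-local representation of Proposition \ref{prop:NonLocalRepr}, bound the bounded $0$-homogeneous kernels uniformly, and reduce the claim to estimating the radial integrals by the cumulative mass function $\psi(r)=|\E_d u|(B_r(x))$, exploiting $\psi(r)/r^{n-1}\to 0$ from $x\notin\Theta_u$. The only difference is cosmetic: you use Lebesgue--Stieltjes integration by parts (treating $\alpha=n$ and $\alpha=n+1$ separately), while the paper uses the layer-cake formula and absorbs the $\ss$-term into the $\alpha=n$ integral via the bound $|y-x|^{-(n+1)}\leq \varrho^{-1}|y-x|^{-n}$ on $B_\tau\setminus B_\varrho$; the two identities are algebraically equivalent.
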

\begin{proof}
We assume $x=0$ without loss of generality and we also write $\mathrm{A}_{\varrho}$, $\gamma_{\varrho}$, $\mathrm{s}_{\varrho}$ in place of $\mathrm{A}_{\varrho,0}$, $\gamma_{\varrho,0}$, $\mathrm{s}_{\varrho,0}$. We notice that, the condition $0\notin \Theta_u$ can be translated into
    \[
   \lim_{\tau\rightarrow 0^+} \sup_{t\in (0,\tau)} \frac{|\E_du|(B_{t}(0))}{t^{n-1}}=0.
    \]
We start by showing the key fact
    \begin{align}\label{zerokill}
       \lim_{\tau\rightarrow 0^+} \limsup_{\varrho \rightarrow 0^+} \varrho \int_{B_{\tau}(0)\setminus B_{\varrho}(0)} \frac{1}{|y|^n} \d|\E_d u|(y)=0
    \end{align}
This comes as a consequence of the layer-cake representation for the measure
    \[
    \mu:= |\E_d u|\llcorner_{B_{\tau}(0)\setminus B_{\varrho}(0)}
    \]
since
\begin{align*}
    \int_{B_{\tau}(0)\setminus B_{\varrho}(0)} \frac{1}{|y|^n} \d|\E_d u|(y)&=\int_{\R^n} \frac{1}{|y|^n}\d \mu(y)=\int_0^{+\infty}\mu\left(\left\{ \frac{1}{|y|^n}>t\right\}\right)\d t\\
    &=n\int_0^{+\infty} \frac{\mu(\{|y|\leq s\}}{s^{n+1}} \d s\\
    &=n\int_{\varrho}^{\tau} \frac{|\E_d u|(B_s(0)\setminus B_{\varrho}(0))}{s^{n+1}} \d s + \frac{|\E_d u|(B_\tau (0)\setminus B_{\varrho}(0))}{\tau^{n}} \\
\end{align*}
Moreover
    \begin{align*}
      n\int_{\varrho}^{\tau} \frac{|\E_d u|(B_s(0)\setminus B_{\varrho}(0))}{s^{n+1}} \d s &\leq n \sup_{t\in (0,\tau)}   \frac{|\E_d u|(B_t(0))}{t^{n-1}}\int_{\varrho}^{\tau} s^{-2}\d s\\
      &=n\sup_{t\in (0,\tau)}   \frac{|\E_d u|(B_t(0))}{t^{n-1}} \left(\frac{1}{\varrho}-\frac{1}{\tau}\right).
    \end{align*}
Therefore
    \begin{align*}
        \limsup_{\varrho\rightarrow 0^+} \varrho \int_{B_{\tau}(0)\setminus B_{\varrho}(0)} \frac{1}{|y|^n} \d|\E_d u|(y)\leq n\sup_{t\in (0,\tau)}   \frac{|\E_d u|(B_t(0))}{t^{n-1}}
    \end{align*}
yielding \eqref{zerokill}, since $0\notin\Theta_u$. By now using the representation \eqref{eqn:antisym}, \eqref{eqn:diver}, \eqref{eqn:scdorder} we see that  (by definition)
    \[
    |\Gamma(y)|   + |\Xi(y)| + |\Upsilon(y)|\leq C
    \]
giving that
    \begin{align*}
        \varrho^2|\mathrm{s}_{\varrho}[u]|+\varrho|\mathrm{A}_{\varrho}[u]|+\varrho|\gamma_{\varrho}[u]|\leq C\varrho\int_{B_{\tau}(0)\setminus B_{\varrho}(0)} \frac{1}{|y|^n}\d|\E_d u|(y)+\varrho \kappa(\tau)
    \end{align*}
where $\kappa(\tau)$ is a constant depending on $\tau$ only. We now first take the limit in $\varrho$ and then in $\tau$, which by exploiting \eqref{zerokill}, achieves the proof.
\end{proof}
We are now ready to prove Proposition \ref{propo:HigerOrderDiesOnBall}.
\begin{proof}[Proof of Proposition \ref{propo:HigerOrderDiesOnBall}]
Without loss of generality set $x=0$. Relation \eqref{eqn:decaysOnBall} comes from Proposition \ref{propo:keydecayment}. We now set $\mathcal{R}_{\varrho}=\mathcal{R}_{B_{\varrho}(0)}$ and we first invoke Poincaré ineqality \ref{prop:poincare} to see that
    \[
    \frac{1}{\omega_n\varrho^n}\int_{B_{\varrho}(0)}|u(y)-\mathcal{R}_\varrho [u](y)|\d y\leq c  \frac{|\E_d u|(B_{\varrho}(0))}{\varrho^{n-1}} 
    \]
for a universal constant $c$ independent of $\varrho$. Moreover
  \begin{align*}  \fint_{B_{\varrho}(0)}|u(y)-\mathcal{R}_\varrho [u](y)|\d y\geq& \fint_{B_{\varrho}(0)}|u(y)-d_{\varrho}[u]|\d y\\
  &-\kappa \left[ \fint_{B_{\varrho}(0)}|R_{\varrho}[u] y|\d y + \fint_{B_{\varrho}(0)}|\gamma_{\varrho}[u] y|\d y\right]\\
  &- \kappa \left|\fint_{B_{\varrho}(0)} [(s_{\varrho}[u]\cdot y)y - s_{\varrho}[u]|y|^2]\d y\right|  \\
  \geq & \fint_{B_{\varrho}(0)}|u(y)-d_{\varrho}[u]|\d y-\kappa \left[\varrho|R_{\varrho}[u]|+\varrho|\gamma_{\varrho}[u]|\right].
  \end{align*}
 for a universal constant $\kappa>0$. Henceforth
    \begin{align*}
       \fint_{B_{\varrho}(0)}|u(y)-d_{\varrho}[u]|\d y&\leq \fint_{B_{\varrho}(0)}|u(y)-\mathcal{R}_\varrho [u](y)|\d y+\kappa \left[\varrho|R_{\varrho}[u]|+\varrho|\gamma_{\varrho}[u]|+\varrho^2 |s_{\varrho}[u]|\right]\\
           &\leq \kappa'\left[\frac{|\E_d u|(B_{\varrho}(0))}{\varrho^{n-1}}+ \varrho|R_{\varrho}[u]|+\varrho|\gamma_{\varrho}[u]|+\varrho^2 |s_{\varrho}[u]|\right].
    \end{align*}
By taking the limit as $\varrho\rightarrow 0$ and by exploiting $x\notin \Theta_u$, together with Proposition \ref{propo:keydecayment} we achieve the proof.
\end{proof}

\bibliography{references}

\begin{thebibliography}{10}

\bibitem{alberti2014p}
G.~Alberti, S.~Bianchini, and G.~Crippa.
\newblock On the $\mathrm{L}^p $-differentiability of certain classes of
  functions.
\newblock {\em Revista Matem{\'a}tica Iberoamericana}, 30(1):349--367, 2014.

\bibitem{ambrosio1997fine}
L.~Ambrosio, A.~Coscia, and G.~Dal~Maso.
\newblock Fine properties of functions with bounded deformation.
\newblock {\em Archive for Rational Mechanics and Analysis}, 139(3):201--238,
  1997.

\bibitem{ambrosio2000functions}
L.~Ambrosio, N.~Fusco, and D.~Pallara.
\newblock {\em Functions of bounded variation and free discontinuity problems},
  volume 254.
\newblock Clarendon Press Oxford, 2000.

\bibitem{arroyo2020slicing}
A.~Arroyo-Rabasa.
\newblock Slicing and fine properties for functions with bounded a-variation.
\newblock {\em arXiv preprint arXiv:2009.13513}, 4, 2020.

\bibitem{arroyo2019dimensional}
A.~Arroyo-Rabasa, G.~De~Philippis, J.~Hirsch, and F.~Rindler.
\newblock Dimensional estimates and rectifiability for measures satisfying
  linear pde constraints.
\newblock {\em Geometric and Functional Analysis}, 29(3):639--658, 2019.

\bibitem{arroyo2023elementary}
A.~Arroyo-Rabasa and J.~Simental.
\newblock An elementary approach to the homological properties of constant-rank
  operators.
\newblock {\em Comptes Rendus. Math{\'e}matique}, 361(G1):45--63, 2023.

\bibitem{arroyo2019fine}
A.~Arroyo-Rabasa and A.~Skorobogatova.
\newblock On the fine properties of elliptic operators.
\newblock {\em arXiv preprint arXiv:1911.08474}, 2019.

\bibitem{BNPS2015}
S.~Bauer, P.~Neff, D.~Pauly, and G.~Starke.
\newblock $\mathrm{Dev}-\mathrm{Div} $ and $\mathrm{DevSym}-\mathrm{DevCurl}$
  inequalities for incompatible square tensor fields with mixed boundary
  conditions.
\newblock {\em ESAIM: COCV}, 22(1):112--133, 2016.

\bibitem{bogachev2007measure}
V.~I. Bogachev and M.~A.~S. Ruas.
\newblock {\em Measure theory}, volume~1.
\newblock Springer, 2007.

\bibitem{bouchitte1998global}
G.~Bouchitt{\'e}, I.~Fonseca, and L.~Mascarenhas.
\newblock A global method for relaxation.
\newblock {\em Arch. Rational Mech. Anal.}, 145(1):51--98, 1998.

\bibitem{breit2017traces}
D.~Breit, L.~Diening, and F.~Gmeineder.
\newblock {On the trace operator for functions of bounded
  $\mathbb{A}$-variation}.
\newblock {\em Analysis e PDE}, 13(2):559 -- 594, 2020.

\bibitem{caroccia2019integral}
M.~Caroccia, M.~Focardi, and N.~Van~Goethem.
\newblock On the integral representation of variational functionals on $ bd$.
\newblock {\em arXiv preprint arXiv:1907.11478}, 2019.

\bibitem{CVG2025}
M.~Caroccia and N.~Van~Goethem.
\newblock Iterative blow-ups for maps with bounded $\mathcal{A}$-variation: a
  refinement, with application to $ \mathrm{BD}$ and $\mathrm{BV}$.
\newblock {\em arXiv preprint arXiv:2504.02490}, 2025.

\bibitem{VanishingBulk2}
N.~Buss Clemens, C.~Heussinger, and O.~Hallatschek.
\newblock Thermalized connectivity networks of jammed packings.
\newblock {\em Soft matter}, 12 36:7682--7, 2016.

\bibitem{Dain}
S.~Dain.
\newblock Generalized korn's inequality and conformal killing vectors.
\newblock {\em Calc. Var.}, 25:535--540, 2006.

\bibitem{de2016structure}
G.~De~Philippis and F.~Rindler.
\newblock On the structure of $\mathcal{A}$-free measures and applications.
\newblock {\em Annals of Mathematics}, pages 1017--1039, 2016.

\bibitem{de2019fine}
G.~De~Philippis and F.~Rindler.
\newblock Fine properties of functions of bounded deformation-an approach via
  linear pdes.
\newblock {\em Mathematics In Engineering}, 2(3):386--422, 2020.

\bibitem{del2021rectifiability}
G.~Del~Nin.
\newblock Rectifiability of the jump set of locally integrable functions.
\newblock {\em Annali Scuola Normale Superiore- Classe di Scienze}, pages
  1233--1240, 2021.

\bibitem{diening2024sharp}
L.~Diening and F.~Gmeineder.
\newblock Sharp trace and korn inequalities for differential operators.
\newblock {\em Potential Analysis}, pages 1--54, 2024.

\bibitem{VanishingBulk4}
D.~S. Dong, L.and~Stone and R.~S. Lakes.
\newblock Softening of bulk modulus and negative poisson ratio in barium
  titanate ceramic near the curie point.
\newblock {\em Philosophical Magazine Letters}, 90(1):23--33, 2010.

\bibitem{ebo99b}
F.~Ebobisse.
\newblock Fine properties of functions with bounded deformation and
  applications in variational problems.
\newblock {\em {Ph.D. Thesis}}, 1999.

\bibitem{evans2018measure}
L.~C. Evans.
\newblock {\em Measure theory and fine properties of functions}.
\newblock Routledge, 2018.

\bibitem{gmeineder2019}
F.~Gmeineder and B.~Rai\c{t}\u{a}.
\newblock Embeddings for {$\Bbb{A}$}-weakly differentiable functions on
  domains.
\newblock {\em J. Funct. Anal.}, 277(12):108278, 33, 2019.

\bibitem{gmeineder2017critical}
F.~Gmeineder and B.~Raita.
\newblock On critical $\mathrm{L}^p$-differentiability of $\bd$ maps.
\newblock {\em Rev. Mat. Iberoam}, 5, 2017.

\bibitem{NegativePoisson2}
G.N. Greaves, A.~L. Greer, R.~S. Lakes, and T.~Rouxel.
\newblock Negative poisson's ratio in modern functional materials.
\newblock {\em Nature Materials}, 10(37):823 – 837, 2011.

\bibitem{VanishingBulk1}
S.~Hirotsu.
\newblock Softening of bulk modulus and negative poisson’s ratio near the
  volume phase transition of polymer gels.
\newblock {\em The Journal of Chemical Physics}, 94(5):3949--3957, 03 1991.

\bibitem{kohn1980new}
R.~V. Kohn.
\newblock New estimates for deformations in terms of their strains: i)
  estimates of wirtinger type for nonlinear strains; ii) functions whose
  linearized strains are measures.
\newblock 1980.

\bibitem{Maggi}
F.~Maggi.
\newblock {\em Sets of finite perimeter and geometric variational problems},
  volume 135 of {\em Cambridge Studies in Advanced Mathematics}.
\newblock Cambridge University Press, Cambridge, 2012.
\newblock An introduction to geometric measure theory.

\bibitem{MSVG2015}
R.~Maggiani, G. B.and~Scala and N.~Van~Goethem.
\newblock {A compatible-incompatible decomposition of symmetric tensors in
  \(L^{p}\) with application to elasticity}.
\newblock {\em {Math. Methods Appl. Sci.}}, 38(18):5217--5230, 2015.

\bibitem{NegativePoisson}
A.~V. Mazaev, O.~Ajeneza, and M.~V. Shitikova.
\newblock Auxetics materials: classification, mechanical properties and
  applications.
\newblock {\em IOP Conference Series: Materials Science and Engineering},
  747(1):012008, jan 2020.

\bibitem{raita2019critical}
B.~Raita.
\newblock Critical $\mathrm{L}^p$ differentiability of
  $\mathrm{BV}^\mathcal{A}$ maps and canceling operators.
\newblock {\em Transactions of the American Mathematical Society},
  372:7297--7326, 2019.

\bibitem{TS2}
R.~Temam and G.~Strang.
\newblock {Functions of bounded deformation.}
\newblock {\em Arch. Rational Mech. Anal.}, 75:7--21, 1980.

\bibitem{van2013limiting}
J.~Van~Schaftingen.
\newblock Limiting sobolev inequalities for vector fields and canceling linear
  differential operators.
\newblock {\em Journal of the European Mathematical Society (EMS Publishing)},
  15(3), 2013.

\end{thebibliography}
\bibliographystyle{plain}

\end{document}